\newtheorem{theorem}{Theorem}[section]
\numberwithin{theorem}{section} \numberwithin{equation}{section}
\newtheorem{corollary}[theorem]{Corollary}
\newtheorem{proposition}[theorem]{Proposition}
\newtheorem{definition}[theorem]{Definition}
\newtheorem{lemma}[theorem]{Lemma}
\newcommand\scalemath[2]{\scalebox{#1}{\mbox{\ensuremath{\displaystyle #2}}}}
\title[Horizontal 2-plane distributions on 5-manifolds]{Geometric aspects of rank-3 vector bundles over surfaces and 2-plane distributions on 5-manifolds}
\author{Brandon P. Ashley}
\address{Dept.\!~of Mathematics, Southern Oregon University, Ashland, OR 97520}
\email{ashleyb@sou.edu}
\author{Michael T.  Schultz}
\address{Dept.\!~of Mathematics, Virginia Tech, Blacksburg, VA 24060}
\email{michaelschultz@vt.edu}
\begin{document}
	\begin{abstract}
		We study geometric aspects of horizontal 2-plane distributions on the complement of the zero section in the 5-dimensional total space of a rank-3 vector bundle equipped with connection over a surface. We show that any surface in 3-dimensional projective space can be associated to such a geometric structure in 5-dimensions, and establish a dictionary between the projective differential geometry of the surface and the growth vector of the 2-plane distribution.
	\end{abstract}
	\keywords{$\mathrm{G}$-connections, $(2,3,5)$-distributions, projective surfaces}
	\subjclass[2020]{35A30, 53A20, 58A30, 53B15}
	\maketitle
	\tableofcontents
	\section{Introduction}
	\label{s-intro}
	\par In differential geometry, the role of a connection on the tangent bundle of a manifold is of central importance to the intrinsic geometry of the space. Gauge-theoretic considerations ask the question of how more general geometric structures may be associated to a given manifold. One interaction between these two areas can be seen in the induced extrinsic geometry obtained by immersing the manifold into an ambient space and restricting the ambient geometry onto the image. 
	\par The classical differential geometry of surfaces in $3$-dimensional euclidean space $\mathbb{R}^3$ illustrates this interaction. Let $\Sigma$ be a smooth surface and suppose that $\psi : \Sigma \to \mathbb{R}^3$ is an immersion. Then there is a natural orthogonal splitting of the rank-3 vector bundle $B = \psi^* T\mathbb{R}^3 \to \Sigma$ as $B = T\Sigma \oplus N_{\Sigma}$ induced by the first fundamental form of $\Sigma$ imparted by the immersion $\psi$. Here $N_{\Sigma}$ is the normal bundle of $\Sigma \subset \mathbb{R}^3$, which we identify with its image $\psi(\Sigma)$ by working locally, and may regard $N_{\Sigma} \to \Sigma$ and $B \to \Sigma$ as being a trivial vector bundles. The fibres of $B$ carry the obvious $\mathrm{GL}(3)$ representation, and we may view $B$ as being an associated bundle of a left principal $\mathrm{GL}(3)$-bundle over $\Sigma$.
	\par The first fundamental form induces a canonical symmetric metric connection on $T\Sigma$, the Levi-Civita connection, and the normal bundle $N_{\Sigma}$ inherits a connection determined by the second fundamental form. These two connections fit together to form a $\mathrm{GL}(3)$-connection $\nabla$ on the rank-3 bundle $B$ as follows. Let $(x,y)$ be local coordinates on $\Sigma$. Then the immersion $\psi$ can be locally represented by 
	\begin{equation}
		\label{eq-euclidean_immersion}
		(x,y) \mapsto z(x,y) = \left(z_1(x,y),z_2(x,y),z_3(x,y)\right) \in \mathbb{R}^3 \, .
	\end{equation}
	Then $p = z_x$ and $q = z_y$ locally span the tangent bundle $T\Sigma$, and we may complete to a frame $e = (\sigma,p,q)$ of $B$ by choosing an appropriate section $\sigma$ of the normal bundle $N_{\Sigma}$ with unit norm. It is well known that $z$ satisfies the second order system of differential equations
	\begin{equation}
		\label{eq-euclidean_equations}
		\begin{aligned}
			& z_{x x}=\Gamma^1_{11} z_x+\Gamma^2_{11} z_y+l \sigma \, , \\
			& z_{x y}=\Gamma^1_{12} z_x+\Gamma^2_{12} z_y+m \sigma \, , \\
			& z_{y y}=\Gamma^1_{22} z_x+\Gamma^2_{22} z_y+n \sigma \, ,
		\end{aligned}
	\end{equation}
	where $\Gamma^i_{jk}$ are the Christoffel symbols defining the Levi-Civita connection, and $l,m,n$ are the coefficients of the second fundamental form. Equation (\ref{eq-euclidean_equations}), known classically as the Gauss formula, yields a $\mathfrak{gl}(3)$-valued 1-form $\omega$ on $\Sigma$ given by 
	\begin{equation}
		\label{eq-euclidean_connection}
		\omega = \left(\begin{array}{ccc}
			0 & dx & d y \\
			l \, d x + m \, dy & \Gamma^1_{11}\, dx + \Gamma^1_{12} \, dy & \Gamma^2_{11} \, d x + \Gamma^{2}_{12} \, dy  \\
			m \, d x + n \, dy &  \Gamma^1_{12}\, dx + \Gamma^1_{22} \, dy & \Gamma^2_{12} \, d x + \Gamma^{2}_{22} \, dy \\
		\end{array}\right) \,,
	\end{equation}
	encoding the connection 1-forms of $\nabla$.
	\par This connection naturally contains information on the geometry of $\Sigma \subset \mathbb{R}^3$. For example, a simple calculation of the curvature tensor $\mathbf{R} = d\omega - \omega \wedge \omega$ reveals that the classical Gauss-Codazzi equations
	\begin{equation}
		\label{eq-codazzi}
		\begin{aligned}
			& m_x - l_y = l\Gamma^1_{12} + m(\Gamma^2_{12}-\Gamma^1_{11}) - n\Gamma^2_{11} \, , \\
			& n_x - m_y = l\Gamma^1_{22} +m(\Gamma^2_{22}-\Gamma^1_{12}) - n\Gamma^2_{12} \, , 
		\end{aligned}
	\end{equation}
	are simply the vanishing of certain components of the curvature tensor; it is a basic result that $\mathbf{R} = 0$ if $\Sigma \subset \mathbb{R}^3$ is a plane. One may obtain further identities by utilizing the differential Bianchi identity $d_{\nabla}\mathbf{R}=0$, where $d_{\nabla}$ is the exterior covariant derivative. On the other hand, it is useful to insist that the frame $e$ be orthonormal in $B$, which is equivalent to a reduction of the structure group of $B$ from $\mathrm{GL}(3)$ to $\mathrm{SO}(3)$. This leads to the well-known Darboux frame away from umbilic points of $\Sigma$ (see e.g., \cite[Ch. 2]{Ivey_Landsberg2003}), which leads to an analogous connection that is canonically flat.
	\par Other geometric information can be seen by examining covariant derivatives of the curvature $\mathbf{R}$ with respect to $\nabla$. In this case, a coarse invariant --- the growth vector --- can be obtained by studying the derived flag of the exterior differential system (EDS) of the connection 1-forms pulled back to complement of the zero section in the 5-dimensional total space $\mathrm{Tot}(B)$ of the vector bundle $B$. Let $M^5 = \mathsf{Z}_B^c \subset \mathrm{Tot}(B)$, where $\mathsf{Z}_B \subset \mathrm{Tot}(B)$ is the zero section of the bundle $\pi : B \to \Sigma$. Then it is easy to see that the differential ideal $\Omega :=\langle\pi^*\omega\vert_{M^5}\rangle_{\mathrm{diff}}$ generated by the three 1-forms on $M^5$,
	\begin{equation}
		\label{eq-euclidean_1forms}
		\begin{cases}
			\omega_{\sigma} := d\sigma - pdx - qdy \, , \\
			\omega_x := dp - (l\sigma + p\Gamma^1_{11}+q\Gamma^2_{11})dx - (m\sigma +p\Gamma^1_{12}+q\Gamma^2_{12})dy \, , \\
			\omega_y := dq - (m\sigma + p\Gamma^1_{12}+q\Gamma^2_{12})dx - (n\sigma +p\Gamma^1_{22}+q\Gamma^2_{22})dy \, ,
		\end{cases}
	\end{equation}
	determines a rank-3 EDS $\Omega = \langle \omega_\sigma, \omega_x, \omega_y \rangle_{\mathrm{diff}}$ on $M^5$ from the connection 1-forms on $\Sigma$. Dualizing, $\Omega$ determines a rank-2 distribution $\Delta = \Omega^\perp \subset TM^5$, i.e., $\Delta$ is defined as the rank-2 sub-bundle of $TM^5$ such that $\Omega = \mathrm{ann}(\Delta)$ is the annihilator of $\Delta$. The vector fields $X,Y$ generating $\Delta$ are easily computed as
	\begin{equation}
		\label{eq-euclidean_distribution}
		\begin{cases}
			X = \partial_x + p\partial_{\sigma} + (l\sigma + p\Gamma^1_{11}+q\Gamma^2_{11})\partial_p + (m\sigma +p\Gamma^1_{12}+q\Gamma^2_{12})\partial_q \, , \\
			Y = \partial_y + q\partial_{\sigma} + (m\sigma + p\Gamma^1_{12}+q\Gamma^2_{12})\partial_p + (n\sigma +p\Gamma^1_{22}+q\Gamma^2_{22})\partial_q \, .
		\end{cases}
	\end{equation}
	Generically, the distribution is \emph{bracket generating}, meaning that the derived flag of $\Delta$ --- the collection of all sub-bundles of $TM^5$ generated by iterated Lie brackets of $\Delta = \mathrm{span}\,\{X,Y\}$ --- generates all of $TM^5$. 
	\par In this generic case\footnote{There is another bracket generating case, that of the \emph{contact distribution}, which has growth vector $(2,3,4,5)$. This however, is not generic behavior.}, Cartan showed \cite{Cartan_1910} that such distributions have growth vector $(2,3,5)$, meaning that $\mathrm{rank}(\Delta^\prime)=\mathrm{rank}([\Delta,\Delta]) = 3$ and $\mathrm{rank}(\Delta^{(2)})=\mathrm{rank}([\Delta^\prime,\Delta^\prime])=5$. It is a basic consequence of the connection $\nabla$ that there is a splitting $TM^5 = H \oplus V$ into horizontal and vertical distributions, where the rank-2 distribution $H = \pi^*T\Sigma\vert_{M^5} \cong \Delta$ and $V$ is the rank-3 distribution tangent to the fibres of $B$ along $M^5$. Then the curvature $\mathbf{R}$ of $\nabla$ vanishes if and only if $\Delta$ is integrable in the sense of Frobenius, that is $\Delta^\prime = \Delta$. In the case of nonvanishing curvature, we symbolically have $\Delta^\prime \subset V$ generated by $\mathbf{R}$ and $\Delta^{(2)} \subset V$ generated by the covariant derivative $\nabla \mathbf{R}$.
	\par Cartan demonstrated in loc. cit. that the local equivalence problem associated to generic bracket generating 2-plane distributions on a given $M^5$ exhibits a surprising connection to the (split-real form of the) simple Lie algebra $\mathfrak{g}_2$, which arises as the maximal local symmetry algebra of the distribution. Implicit in Cartan's construction, and later elucidated by Nurowski \cite{MR2157414}, is the existence of a conformal structure of split signature $(3,2)$ on $TM^5$ in which $\Delta$ is totally null, and whose Weyl tensor determines the local obstructions to maximal $\mathfrak{g}_2$ symmetry. The conformally flat case is necessary and sufficient for maximal symmetry. The geometry of such distributions continue to be of interest into modernity, studied in a variety of contexts, for example, in works respectively by Nurowski, C\v ap \& Sagerschnig, Willse, and Randall \cite{MR2384729,MR2536853,MR3159952,MR4612637,MR4520032,MR4895591}. They have arisen in connection to the geometry of surfaces rolling without slipping or twisting, studied in addition to those above for example by Bryant \& Hsu, Zelenko, and Erickson \cite{MR1240644,Zelenko_2006,erickson_surfaces2025}, in connection to the geometry of twistor distributions on split signature semi-riemannian 4-manifolds by An \& Nurowski and Nurowski, Sagerschnig \& The \cite{Nurowski_2014,MR4887495}, and in connection to Darboux integrability of certain nonlinear PDE in the plane, for example by Stormark, Strazzullo, and the dissertation of the first named author \cite{Stormark_2000,MR2717829,MR4380607}. Other recent work by Dave \& Haller, Haller \cite{MR3934472,MR4458384,MR4959549,haller2024,haller2025} has focused on global aspects of $(2,3,5)$-distributions on both open and closed 5-manifolds, with the construction of Dave \& Haller in \cite{MR3934472} producing such distributions on certain closed 5-manifolds associated to a surface $\Sigma$, while requiring that the Euler characteristic of a closed surface $\Sigma$ be $\chi(\Sigma) \geq 0$.
	\par In this article we initiate a study of relationships between two subjects shaped by the work of E. Cartan: on the one hand, his work \cite{Cartan_1920} from 1920 on the differential geometry of surfaces in 3-dimensional projective space --- in particular, those that admit projective deformations; on the other hand, his work \cite{Cartan_1910} from 1910 on 2-plane distributions in 5-dimensions. To set the stage for our work, we utilize in \S \ref{s-rank_3} the well-known horizontal and vertical distributions on the total space of a vector bundle with connection over a manifold. In the case of a rank-3 vector bundle with connection over a surface, we obtain in Lemma \ref{lemma-horizontal_distribution_M5} this horizontal distribution as a 2-plane distribution on the complement of the zero section in the 5-dimensional total space of the bundle. Although elementary, these horizontal distributions appear to have not been widely studied in the relevant literature on 2-plane distributions in 5-dimensions. We prove an easy result (\ref{thm-general_growth_vector}) characterizing the growth vector of such distributions in terms of the curvature of the connection and its covariant derivatives that is useful in our work on projective surfaces.
	\par The connection between the two areas influenced by Cartan comes through local study of an overdetermined rank-4 linear system of PDEs two variables, known to many of the old masters at the turn of the 20th century (Equations (\ref{eq-rank_four_system}, \ref{eq-canonical_sys})). We review this material in \S \ref{s-DG}. We show that for a very general surface $\Sigma \subset \mathbb{P}^3$, the differential invariants of $\Sigma$ that are discussed in detail in \S \ref{s-DG} determine a bracket generating 2-plane distribution $\bar{\Delta}$ on a certain 5-manifold that is the complement of the zero section in the total space of a rank-3 vector bundle equipped with connection over $\Sigma$. The mechanism by which we obtain this geometric data from the rank-4 linear system is a non-traditional form of Cauchy reduction that we term \emph{derived reduction} (Lemma \ref{lemma-cauchy_reduction}). This process may be of use in studying other linear systems of PDE related to projective hypersurfaces \cite{MR996019}, so we take care to develop this idea in \S \ref{s-derived_reduction}. Although the subject of surfaces in $\mathbb{P}^3$ is  classical, our insights here appear to be new, and are provided by analyzing the linear systems of PDE from the perspective of exterior differential systems. As an application, one of our main results is Theorem \ref{thm-proj_surface_growth_vector}, in which we establish a dictionary between the growth vector of the derived flag of the 2-plane distribution $\Delta$ and the projective differential invariants of $\Sigma$. The dictionary resembles the rough classification of projective surfaces into quadrics, ruled surfaces, and general surfaces. We demonstrate the utility of our perspectives in Proposition \ref{prop-3_param_family} by using this correspondence to produce an interesting family of projective surfaces, which we believe to be new.
	\par Part of the novelty of the work in this article is that we study 2-plane distributions on 5-manifolds that are explicitly dependent on the \emph{extrinsic} geometry of a surface, as opposed to much of the existing work in the literature, e.g., in the work referenced above on surfaces rolling without twisting or slipping. Moreover, in most work, it is required that the generic 2-plane distribution be bracket generating, and hence has growth vector $(2,3,5)$. Here, we demonstrate that the geometry of horizontal 2-plane distributions are connected to interesting and somewhat subtle geometry connected to projective surfaces even when they \emph{fail} to be bracket generating.
	\par A brief remark on conventions and notation: the scope of this article includes both the real $C^\infty$ category and complex analytic category, though it is necessary to impose a mild assumption in the former that will be described in \S \ref{ss-rank-4_systems}. We let $\mathbb{K}$ denote either $\mathbb{R}$ or $\mathbb{C}$ as we consider working in either respective category. As such, all relevant geometric data is assumed to be smooth in the appropriate category even if it is not said explicitly. Over $\mathbb{C}$ we take smooth to mean holomorphic and a diffeomorphism to be a biholomorphism, and we always work in the analytic topology. Since we are concerned with projective geometry, we utilize notation for line bundles and sheaves on projective space familiar to algebraic and complex geometers; i.e., $\mathcal{O}(-1) \to \mathbb{P}^n$ is the tautological sheaf and $\mathcal{O}(m) = \mathcal{O}(-1)^{\otimes m}$ for $m \in \mathbb{Z}$, and $\mathcal{O}_{\Sigma}(m) = \mathcal{O}(m) \vert_{\Sigma}$ for $\Sigma \subset \mathbb{P}^n$. If $E \to \Sigma$ is a smooth vector bundle, we write $\underline{E}$ to denote the sheaf of germs of local smooth sections. The superscript $T$ indicates transpose, so that for example if $v \in \underline{E}$, we have $v^T \in \underline{E}^*$ (in some local trivialization), where $E^* \to \Sigma$ is the dual bundle. We write $\mathrm{End}(E) = E \otimes E^*$ as the endomorphism bundle of $E$. For a field $\mathbb{K}$, the symbol $\mathbb{\underline{K}}^n$ denotes the locally constant sheaf whose stalk is isomorphic to $\mathbb{K}^n$. When a local framing of a bundle has been specified, we use the summation convention on repeated indices. We use standard notation for intrinsic vector bundles such as $T\Sigma, \, T^*\Sigma$ to respectively denote the tangent and cotangent bundles of $\Sigma$. The canonical sheaf $K_\Sigma = \wedge^2 \underline{T^*\Sigma}$ is the sheaf of sections of the highest exterior power of the cotangent bundle. $S^p(\Sigma)$, $\Omega^p(\Sigma)$ denotes the respective sheaves of smooth $\mathcal{O}_{\Sigma}$-valued symmetric covariant $p$-tensors and $p$-forms on $\Sigma$, etc., where $\mathcal{O}_{\Sigma}=\Omega^0(\Sigma)$ denote the structure sheaf of local smooth functions on $\Sigma$. Finally $\mathcal{O}^*_{\Sigma} \subset \mathcal{O}_{\Sigma}$ denotes the subsheaf of nonvanishing smooth functions. 
	\par In this context, the results we prove in \S \ref{s-2_plane_distr} apply to a large swath of surfaces of interest. For $\mathbb{K} = \mathbb{C}$, we may associate by our framework a 2-plane distribution to any smooth algebraic surface in $\mathbb{P}^3$. It follows from Theorem \ref{thm-proj_surface_growth_vector} that the distribution is $(2,3,5)$ for general surfaces of nonnegative Kodaira dimension. For $\mathbb{K} = \mathbb{R}$, our framework applies to closed surfaces that may be embedded in real projective 3-space, classified by the following theorem of Bredon \& Wood \cite{MR246312}.
	\begin{theorem}
		\label{thm-bredon_wood}
		A closed surface $\Sigma$ embeds in real projective 3-space if and only if $\Sigma$ is orientable or of odd Euler characteristic.
	\end{theorem}
	See also the short note by Creighton \cite{MR515780} for a geometric proof of this result. It follows in particular that our Theorem \ref{thm-proj_surface_growth_vector} applies to closed, orientable surfaces of negative Euler characteristic $\chi(\Sigma) \leq -2$.
	\begin{flushleft}
		\emph{Acknowledgment.} After an initial draft of this article was posted, we received valuable feedback from Stefan Haller. We thank him for his comments and his references to relevant literature.
	\end{flushleft}
	\section{Rank-3 bundles over a surface}
	\label{s-rank_3}
	Let $\pi : B \to \Sigma$ be a smooth rank-3 vector bundle over a surface $\Sigma$, and $\mathsf{Z}_B \subset~ \mathrm{Tot}(B)$ be the zero section. Assume that $B$ is an associated bundle of a smooth, left principal $\mathrm{G}$-bundle over $\Sigma$ for some Lie group $\mathrm{G}$. Write $\mathfrak{g}$ for the Lie algebra of $\mathrm{G}$. Assume further that $B$ is endowed with a $\mathrm{G}$-connection $\nabla$. Working locally, let $\omega = (\omega^\alpha_\beta) \in \mathrm{\underline{End}}(B) \otimes \Omega^1(\Sigma)$, where $\mathrm{End}(B)$ has been equipped with the associated $\mathfrak{g}$-representation. Let $\mathbf{R} \in \mathrm{\underline{End}}(B) \otimes \Omega^2(\Sigma)$ be the curvature tensor. We have then that $\mathbf{R} = d\omega - \omega \wedge \omega$, and that $g \in \mathrm{G}$ acts on $\nabla$ and $\mathbf{R}$ by $\omega \mapsto \tilde{\omega} = g \omega g^{-1} + dg\cdot g^{-1}$, and $\mathbf{R} \mapsto \tilde{\mathbf{R}} = g\mathbf{R}g^{-1}$. Our calculations are purely local, so we may freely assume that $\Sigma$ is connected and $B$ is trivial. Write then that $\underline{B} = \mathrm{span}\{e_1,e_2,e_3\}$ for global independent sections $e_1,e_2,e_3$ of $B$. 
	\par In the context of this article, we are interested in how such extrinsic geometric data attached to $\Sigma$ is encoded in the intrinsic geometry of the 5-manifold $M^5=\mathsf{Z}_B^c \subset \mathrm{Tot}(B)$, which is endowed with the natural local coordinates $(x,y,z,p,q)$. This is most naturally encoded in the language of exterior differential systems, and their dual vector field systems (see for example \cite{BCG3,Ivey_Landsberg2003,Stormark_2000}). We recall the following basic terminology. 
	\par A distribution $\Delta \subseteq TM$ is a smooth sub-bundle of the tangent bundle of a manifold $M$. The dual object of $\Delta$ is an exterior differential system (EDS) $\Omega = \mathrm{ann}(\Delta) \subseteq T^*M$ consisting of the 1-forms that annihilate all the vector fields in $\Delta$, and naturally determines a differential ideal in the exterior algebra of $M$. The derived distribution of $\Delta$ is by definition the distribution 
	\begin{equation*}
		\Delta' = \Delta + [\, \Delta, \, \Delta \,] \, , 
	\end{equation*}
	spanned by a local basis of vector fields for $\Delta$ and their Lie brackets. A distribution is \emph{integrable} if and only if $\Delta = \Delta^\prime$; this is the classical Frobenius theorem. Dualizing, one shows that
	\begin{equation*}
		\Omega^\prime = \mathrm{ann}(\Delta^\prime) = \{ \, \theta \in \underline{T^*M} \, | \, d\theta \equiv 0 \; \mathrm{mod} \, \Omega \, \} \, 
	\end{equation*}
	forms the derived codistribution. By iteratively computing derived distributions, or their duals in the manner described above, one constructs the increasing filtration of distributions, called the derived flag of $\Delta$,
	\begin{equation}
		\Delta = \Delta^{(0)} \subseteq \Delta^{(1)} \subseteq \cdots \subseteq \Delta^{(\infty)} \subseteq TM,
	\end{equation}
	where $\Delta^{(1)} = \Delta'$, $\Delta^{(k+1)} = (\Delta^{(k)})'$, and $\Delta^{(\infty)} = \Delta^{(\ell)}$ where $\Delta^{(\ell)} = (\Delta^{(\ell)})'$. The vector 
	\begin{equation}
		\left(\mathrm{rank} \, \Delta, \, \mathrm{rank} \, \Delta', \, \ldots, \, \mathrm{rank} \, \Delta^{(\infty)}\right)
	\end{equation}
	is called the \emph{growth vector} of $\Delta$, which is a diffeomorphism invariant of $\Delta \subseteq TM$. A distribution is said to be $\ell$-step bracket generating if $\Delta^{(\ell)}=TM$ and $\Delta^{(\ell-1)} \neq TM$. Of course, there is a corresponding decreasing filtration of $T^*M$ by iteratively derived codistributions, whose growth vector is the vector of coranks.
	\subsection{Geometry of horizontal distributions}
	\label{ss-horizontal}
	\par The discussion in \S \ref{s-intro} clearly generalizes to the situation at hand, in that the pullback of the connection forms determine an EDS on $M^5$ whose derived flag is determined by the curvature $\mathbf{R}$ of $\nabla$ and its covariant derivative $\nabla \mathbf{R}$. We state here the general story. In what follows, saying that ``$B$ is a rank-$k$ $\mathrm{G}$-bundle'' means that $B$ is a vector bundle of rank-$k$ whose structure group is $\mathrm{G} \subseteq GL(k)$; hence the Lie algebra $\mathfrak{g} \subseteq \mathfrak{gl}(k)$.
	\begin{proposition}
		\label{prop-total_space_EDS}
		Let $\pi : B \to \Sigma$ be a trivial rank-$k$ $\mathrm{G}$-bundle over an $n$-manifold $\Sigma$, equipped with a compatible linear connection $\nabla$. Let $\mathsf{Z}_B \subset \mathrm{Tot}(B)$ be the zero section. Write $B = \mathrm{span}\{e_\beta\}$ for global independent sections $e_\beta$, $\beta = 1,\dots,k$. In local coordinates $(x^1,\dots,x^n)$ on $\Sigma$, write the connection 1-forms $\omega = (\omega^\alpha_\beta) \in \mathfrak{g} \otimes \Omega^1(\Sigma)$ as $\omega^\alpha_\beta =~ \Gamma^\alpha_{i\beta}dx^i$. Let $M=\mathsf{Z}_B^c$ be the complement of the zero section in the $(n+k)$-dimensional total space of $B$, with local coordinates $(x^i,e_\beta)$. Then the EDS $\Omega := \langle \pi^*\omega \vert_{M} \rangle_{\mathrm{diff}}$ generated by the pullback of the connection forms is a rank-$k$ Pfaffian system generated by the 1-forms
		\begin{equation}
			\label{eq-general_connection_1-forms}
			\omega_\beta := de_\beta - e_\alpha\Gamma^\alpha_{i\beta}dx^i \, , \quad \beta = 1,\dots,k \, 
		\end{equation}
		which are semibasic with respect to $\pi$. Moreover, $\Omega = \mathrm{ann}(\Delta)$ is the annihilator of an $n$-plane distribution $\Delta \subset TM$, naturally realized as the horizontal distribution $H = \pi^*T\Sigma\vert_{M} \cong \Delta$ of the connection $\nabla$. This horizontal distribution is generated by the vector fields 
		\begin{equation}
			\label{eq-general_horizontal_distribution}
			X_i = \partial_{x^i} + \sum_{\beta=1}^{k} e_\alpha \Gamma^\alpha_{i\beta}\partial_{e_\beta} \, , \quad i = 1,\dots,n \, .
		\end{equation}
		The gauge group $\mathrm{G}$, which acts as usual via gauge transformation on the connection as $\omega \mapsto \tilde{\omega} = g\omega g^{-1} + dg \cdot g^{-1}$, acts from the left on $\Omega$ by $g \cdot \omega_\beta = (g^{-1})^*\omega_\beta$.
	\end{proposition}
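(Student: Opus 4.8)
The plan is to treat the $\omega_\beta$ as the frame components of a single invariantly defined object---the covariant differential of the tautological section---so that the three assertions (generation, duality with $H$, gauge-equivariance) reduce to short coordinate checks together with the standard theory of linear connections. On $M=\mathrm{Tot}(B)$ I would form the pullback bundle $\pi^*B\to M$ with the pulled-back frame $\{\pi^*e_\beta\}$ and connection $\pi^*\nabla$, and let $\tau\in\Gamma(M,\pi^*B)$ be the tautological section $\tau(v)=v$, written $\tau=e_\beta\,\pi^*e_\beta$ (sum over $\beta$) once the fibre coordinates $e_\beta$ are regarded as functions on $M$. Computing $(\pi^*\nabla)\tau$ and substituting $\pi^*\omega^\alpha_\beta=\Gamma^\alpha_{i\beta}dx^i$, its frame components are---with the paper's index conventions---exactly the $1$-forms $\omega_\beta$ of \eqref{eq-general_connection_1-forms}. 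This is the conceptual source of the forms and makes two facts transparent: their connection part $-e_\alpha\Gamma^\alpha_{i\beta}dx^i$ is the pullback of the manifestly semibasic connection forms, while the $de_\beta$ part shows the $\omega_\beta$ are triangular with respect to the coframe $\{dx^i,de_\beta\}$, hence pointwise linearly independent. They therefore span a rank-$k$ sub-bundle $I\subset T^*M$, and $\Omega=\langle I\rangle_{\mathrm{diff}}$ is the rank-$k$ Pfaffian system it generates.

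Next I would establish the duality with the horizontal distribution. Abbreviating $A^i_\beta:=e_\alpha\Gamma^\alpha_{i\beta}$, both the forms and the candidate fields are assembled from the same functions, $\omega_\beta=de_\beta-A^i_\beta\,dx^i$ and $X_i=\partial_{x^i}+A^i_\beta\,\partial_{e_\beta}$, so a one-line contraction gives $\omega_\beta(X_j)=A^j_\beta-A^i_\beta\delta^i_j=0$ independently of any sign or transpose convention. Since the $n$ fields $X_i$ are independent (each carries $\partial_{x^i}$) and the rank-$k$ system $I$ cuts out an $n$-plane distribution, a dimension count forces $\Delta:=\mathrm{span}\{X_i\}=\bigcap_\beta\ker\omega_\beta$, and hence $\Omega=\mathrm{ann}(\Delta)$. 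Finally $X_i-\partial_{x^i}$ is vertical and $X_i$ is by construction the parallel-transport lift of $\partial_{x^i}$ determined by $\nabla$, so $\Delta$ is the horizontal bundle $H$ of the connection; as $d\pi(X_i)=\partial_{x^i}$ is a fibrewise isomorphism $H\to T\Sigma$, we obtain $H=\pi^*T\Sigma\cong\Delta$.

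For the gauge assertion I would realize a local gauge transformation $g$ as the fibre-linear automorphism $\Phi_g(x,v)=(x,g(x)\cdot v)$ of $M$ covering $\mathrm{id}_\Sigma$. Because $(\pi^*\nabla)\tau$ is defined without reference to a frame, a change of frame by $g$ transforms its components tensorially; expanding this using $\tilde\omega=g\omega g^{-1}+dg\cdot g^{-1}$ and comparing with the effect of $\Phi_g$ on the fibre coordinates yields $\tilde\omega_\beta=(\Phi_{g^{-1}})^*\omega_\beta$, which is the stated left action $g\cdot\omega_\beta=(g^{-1})^*\omega_\beta$. That this is a genuine left action follows formally from $\Phi_{gh}=\Phi_g\circ\Phi_h$ and the contravariance of pullback.

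I expect the only genuinely delicate point to be bookkeeping rather than mathematics: keeping the placement of the fibre indices in $\omega^\alpha_\beta$, in the horizontal lift, and in the gauge law mutually consistent, and reconciling them with the Euclidean model of \S\ref{s-intro}. Organizing the argument around the invariant object $(\pi^*\nabla)\tau$ and the shared functions $A^i_\beta$ insulates the substantive claims---generation, the rank-$k$ count, the duality $\Omega=\mathrm{ann}(H)$, and gauge-equivariance---from these convention choices, which is why the result is genuinely elementary once set up this way.
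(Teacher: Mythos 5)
Your proposal is correct and takes essentially the same route as the paper, which disposes of the statement in two sentences by calling it ``standard'' (the EDS rephrasing of pulling back $T\Sigma$ to $\mathrm{Tot}(B)$) and by noting that the gauge claim follows from the induced left linear $\mathrm{G}$-action on the fibre coordinates --- precisely the content of your tautological-section computation, the duality check $\omega_\beta(X_j)=0$ with the shared coefficients $A^i_\beta=e_\alpha\Gamma^\alpha_{i\beta}$, and your fibre-automorphism $\Phi_g$ argument. The one point to keep straight is the sign/index-placement convention: the paper's forms correspond to the convention in which parallel objects satisfy $de_\beta=\omega^\alpha_\beta e_\alpha$ (its moving-frame equation, under which integral manifolds are solution graphs), rather than the transposed component convention $dc^\alpha=-c^\beta\omega^\alpha_\beta$, and you correctly isolate this as bookkeeping that does not affect the substantive claims.
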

	\begin{proof}
		This is standard, and is essentially just a rephrasing in the language of exterior differential systems of what it means to pullback the tangent bundle $T\Sigma$ to the complement of the zero section $M = \mathsf{Z}_B^c$. The statement about the left $\mathrm{G}$-action on $\Omega$ follows from the induced left, linear $\mathrm{G}$-action on $M$ on the fibre coordinates. It follows that there is a right $\mathrm{G}$-action on the horizontal distribution ~$\Delta$.
	\end{proof}
	Applying this well known situation to the case of a rank-3 vector bundle over a surface, we naturally obtain a horizontal 2-plane distribution on a 5-manifold.
	\begin{lemma}
		\label{lemma-horizontal_distribution_M5}
		Let $\pi : B \to \Sigma$ be a trivial rank-3 $\mathrm{G}$-bundle over a surface $\Sigma$, equipped with a compatible linear connection $\nabla$. Let $\mathsf{Z}_B \subset \mathrm{Tot}(B)$ be the zero section. Then the horizontal distribution $\Delta = \pi^*T\Sigma\vert_{M^5} \subset TM^5$ is a 2-plane distribution on the 5-manifold $M^5=\mathsf{Z}_B^c\subset\mathrm{Tot}(B)$ determined by the connection $\nabla$. For a generic connection $\nabla$, the derived flag of $\Delta$ is two-step bracket generating $\Delta \subset \Delta^\prime \subset \Delta^{(2)} = TM^5$, with growth vector $(2,3,5)$. The derived distributions $\Delta^\prime \subset \Delta^{(2)} \subseteq V$ are sub-bundles of the vertical bundle $V \subset TM^5$, consisting of vector fields tangent to the fibres of $\pi$, and are respectively determined by the curvature $\mathbf{R}$ of $\nabla$ and its covariant derivatives.
	\end{lemma}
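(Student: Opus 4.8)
The plan is to specialize Proposition~\ref{prop-total_space_EDS} to the case $n=2$, $k=3$ and then compute the derived flag by direct Lie bracket computation, organizing the bookkeeping so that the successive vertical directions are recognized as the curvature and its covariant derivative. Proposition~\ref{prop-total_space_EDS} already delivers the first assertion: $M=\mathrm{Tot}(B)$ has dimension $n+k=5$, the horizontal distribution $\Delta$ is the $n=2$-plane field $\mathrm{span}\{X_1,X_2\}$ with $X_i=\partial_{x^i}+e_\alpha\Gamma^\alpha_{i\beta}\partial_{e_\beta}$, and $V=\mathrm{span}\{\partial_{e_1},\partial_{e_2},\partial_{e_3}\}$ is the rank-$3$ vertical bundle, so that $TM=\Delta\oplus V$ pointwise. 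Writing $X=X_1$, $Y=X_2$, the rank of $\Delta$ is $2$ everywhere, since the horizontal parts $\partial_x,\partial_y$ are independent.

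Next I would compute $Z:=[X,Y]$ directly in coordinates. Since the $\Gamma^\alpha_{i\beta}$ depend only on the base coordinates $(x,y)$, every surviving term is a $\partial_{e_\delta}$-term; that is, $Z$ is purely vertical, reflecting the fact that the horizontal lift of $[\partial_x,\partial_y]=0$ vanishes. Collecting coefficients gives $Z=e_\mu \mathbf{R}^\mu_\delta\,\partial_{e_\delta}$, where $(\mathbf{R}^\mu_\delta)$ is, up to the ambient sign convention, the $dx\wedge dy$-component of the curvature matrix $\mathbf{R}=d\omega-\omega\wedge\omega$; this is the standard identity expressing curvature as the obstruction to involutivity of the horizontal distribution. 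Hence $\Delta'=\mathrm{span}\{X,Y,Z\}$, and since $X,Y$ carry the independent horizontal parts while $Z$ is vertical, $\mathrm{rank}\,\Delta'=3$ exactly when $Z\neq 0$, i.e.\ when the section $e$ is not annihilated on the left by $\mathbf{R}$ — an open dense condition once $\mathbf{R}\not\equiv 0$. The new direction $Z$ thus lies in $V$ and is generated by $\mathbf{R}$.

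I would then compute $[X,Z]$ and $[Y,Z]$. These are again purely vertical, and after combining terms one recognizes the coefficient matrices as $\nabla_{\partial_x}\mathbf{R}$ and $\nabla_{\partial_y}\mathbf{R}$: the derivative terms $\partial_x\mathbf{R}^\mu_\delta$ combine with the two Christoffel contractions $\Gamma^\mu_{1\beta}\mathbf{R}^\beta_\delta-\mathbf{R}^\mu_\gamma\Gamma^\gamma_{1\delta}$ to give exactly the components of the covariant derivative of the $\mathrm{End}(B)$-valued $2$-form $\mathbf{R}$. Since $[X,Y]=Z\in\Delta'$ already, we obtain $\Delta^{(2)}=\mathrm{span}\{X,Y,Z,[X,Z],[Y,Z]\}$, in which the three fields $Z,[X,Z],[Y,Z]$ are all vertical. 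Consequently $\mathrm{rank}\,\Delta^{(2)}=2+\dim\mathrm{span}\{Z,[X,Z],[Y,Z]\}$, so $\Delta^{(2)}=TM^5$ precisely when these three vertical fields span the rank-$3$ bundle $V$. Reading off coefficients, this is the non-vanishing of the $3\times 3$ determinant whose rows are $e^T\mathbf{R}$, $e^T\nabla_{\partial_x}\mathbf{R}$, $e^T\nabla_{\partial_y}\mathbf{R}$; this is an open dense (generic) condition on the connection and the point of $M^5$, and it identifies $\Delta^{(2)}$ as generated by $\mathbf{R}$ together with its covariant derivative $\nabla\mathbf{R}$, in keeping with the Ambrose--Singer description of holonomy. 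Under this genericity hypothesis the growth vector is $(\mathrm{rank}\,\Delta,\mathrm{rank}\,\Delta',\mathrm{rank}\,\Delta^{(2)})=(2,3,5)$ and the flag is two-step bracket generating.

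The main obstacle I anticipate is organizational rather than conceptual: carrying the index and sign conventions cleanly through the three bracket computations so that the vertical coefficient matrices are correctly identified with $\mathbf{R}$ and with $\nabla_{\partial_x}\mathbf{R},\nabla_{\partial_y}\mathbf{R}$, rather than their transposes or negatives, and formulating ``generic'' precisely as the open dense determinantal condition above (equivalently, pinning down the zero section and the degeneracy loci of $\mathbf{R},\nabla\mathbf{R}$ along which the growth vector drops). No single step is hard, but the identification of the iterated brackets with $\nabla\mathbf{R}$ is the crux that must be executed carefully.
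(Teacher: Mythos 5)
Your proposal is correct and follows essentially the same route as the paper's proof: specialize Proposition~\ref{prop-total_space_EDS} to $n=2$, $k=3$, compute the iterated Lie brackets $[X_1,X_2]$ and $[X_k,[X_1,X_2]]$ directly in coordinates, observe they are purely vertical, and identify their coefficient matrices with $\mathbf{R}$ and $\nabla_{\partial_{x^k}}\mathbf{R}$. The only notable difference is that where the paper disposes of genericity by citing Cartan's result for arbitrary 2-plane distributions on 5-manifolds, you formulate it as an explicit open-dense determinantal condition (non-vanishing of the $3\times 3$ determinant with rows $e^T\mathbf{R}$, $e^T\nabla_{\partial_x}\mathbf{R}$, $e^T\nabla_{\partial_y}\mathbf{R}$), which is if anything more careful, since it pins down genericity within the special class of horizontal distributions and flags the degeneracy locus (e.g.\ the zero section) where the growth vector drops.
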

	\begin{proof}
		This is the conditions of Proposition \ref{prop-total_space_EDS} with $n=2$ and $k=3$. Indeed, write let $e=(e_1,e_2,e_3)$ be a global frame of $B$ and local coordinates $(x^1,x^2)$ on $\Sigma$. Then the horizontal distribution $\Delta = \mathrm{span} \, \{X_1,X_2\}$ is given by Equation (\ref{eq-general_horizontal_distribution}). Write $\mathbf{R} = (R^\alpha_\beta) \in \underline{\mathrm{End}}(B) \otimes \Omega^2(\Sigma)$ as the matrix of curvature 2-forms, and $R^\alpha_{\beta ; k} = (\nabla_{\partial_{x^k}} R)^\alpha_\beta$ for $k=1,2$.
		The growth vector being $(2,3,5)$ for a generic connection $\nabla$ follows from Cartan \cite{Cartan_1910}, which is true for a generic 2-plane distribution on any 5-manifold. Then write $\Delta^\prime = \mathrm{span}\{X_1,X_2,X_3\}$ and $\Delta^{(2)} = \mathrm{span}\{X_1,X_2,X_3,X_4,X_5\}$ with $X_3 = [X_1,X_2]$ and $X_{k+3} = [X_k,X_3]$ for $k=1,2$. It is basic result in the theory of connections on a fibre bundle (e.g., expressing the curvature in terms of the Fr\"olicher-Nijenhuis bracket) that the curvature be vertical valued; we have by direct computation that 
		\begin{equation}
			\label{eq-vertical_curvature}
			X_3 = \sum_{\beta=1}^3 e_\alpha R^\alpha_\beta  \partial_{e_\beta} \, .
		\end{equation}
		Then the same is true of $\nabla \mathbf{R}$, and we have
		\begin{equation}
			\label{eq-vertical_covariant_derivatives}
			X_{k+3} = \sum_{\beta=1}^3 e_\alpha R^\alpha_{\beta ; k}  \partial_{e_\beta} \quad k=1,2 \, .
		\end{equation}
	\end{proof}
	We have the following result on the growth vector of $\Delta$, obtained by imposing restrictions on the curvature tensor.
	\begin{theorem}
		\label{thm-general_growth_vector}
		Consider the horizontal 2-plane distribution $\Delta \subset TM^5$ as in Lemma \ref{lemma-horizontal_distribution_M5}. Then
		\begin{enumerate}
			\item[\textit{(i)}] The horizontal distribution $\Delta = \Delta^\prime$ is integrable if and only if the curvature $\mathbf{R} = 0$ of $\nabla$ vanishes.
			\item[\textit{(ii)}] The growth vector of $\Delta$ is $(2,3)$ if and only if the curvature $\mathbf{R} \neq 0$ and is covariantly constant, $\nabla \mathbf{R} = 0$.
			\item[\textit{(iii)}] The growth vector of $\Delta$ is $(2,3,4)$ if and only if $\nabla \mathbf{R} \neq 0$ and there exists $X \in \underline{T\Sigma}$ such that $\nabla_X \mathbf{R} = 0$.
		\end{enumerate}
	\end{theorem}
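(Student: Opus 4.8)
The plan is to extract the entire derived flag from the explicit vertical fields of Lemma~\ref{lemma-horizontal_distribution_M5}. Writing $\mathbf{e}=(e_1,e_2,e_3)$ for the fibre coordinates, the only generators carrying a nonzero horizontal part are $X_1,X_2$, while $X_3,X_4,X_5$ are purely vertical and linear in $\mathbf{e}$, with coefficient matrices $\mathbf{R}=(R^\alpha_\beta)$, $\nabla_1\mathbf{R}=(R^\alpha_{\beta;1})$, $\nabla_2\mathbf{R}=(R^\alpha_{\beta;2})$ by \eqref{eq-vertical_curvature}--\eqref{eq-vertical_covariant_derivatives}. Since $X_1,X_2$ are transverse to the fibres, at each point $m=(x,\mathbf{e})$ one has $\mathrm{rank}\,\Delta^{(j)}_m = 2 + \dim_{\mathbb{K}}(\text{vertical part of }\Delta^{(j)}_m)$, so every entry of the growth vector is governed by the generic rank over $\mathbf{e}$ of the $3\times 3$ matrix
\[
M(\mathbf{e}) = [\,\mathbf{R}^{T}\mathbf{e}\mid(\nabla_{1}\mathbf{R})^{T}\mathbf{e}\mid(\nabla_{2}\mathbf{R})^{T}\mathbf{e}\,],
\]
whose columns are the coefficient vectors of $X_3,X_4,X_5$ in the $\partial_{e_\beta}$ basis; its determinant is a cubic form on each fibre. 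Throughout I use that $\mathbf{e}$ is a free coordinate, so a $\mathbb{K}$-linear identity (with function coefficients) among the vertical fields holds on $M$ if and only if the same identity holds among their coefficient matrices.

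The forward implications follow by inspection of $M(\mathbf{e})$. For \textit{(i)}, $\Delta'=\Delta$ forces the vertical field $X_3$ to vanish, and freeness of $\mathbf{e}$ gives $\mathbf{R}=0$; the converse is the Frobenius statement recorded after Lemma~\ref{lemma-horizontal_distribution_M5}. For the ``if'' half of \textit{(ii)}, $\nabla\mathbf{R}=0$ gives $X_4=X_5=0$, so the last two columns of $M$ vanish and $\mathrm{rank}\,M\equiv 1$ because $\mathbf{R}\neq 0$; hence $\Delta^{(2)}=\Delta'$ has rank $3$, i.e.\ growth $(2,3)$. For the ``if'' half of \textit{(iii)}, choosing coordinates so that the distinguished direction is $X=\partial_{x^1}$ yields $\nabla_1\mathbf{R}=0$, so $X_4=0$ and the vertical part of $\Delta^{(2)}$ is $\langle X_3,X_5\rangle$; since $\nabla\mathbf{R}\neq 0$ the surviving column $(\nabla_2\mathbf{R})^{T}\mathbf{e}$ is independent of $\mathbf{R}^{T}\mathbf{e}$ at a generic fibre point, giving $\mathrm{rank}\,M\equiv 2$ and growth $(2,3,4)$.

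The reverse implications carry the content, and I would obtain them by reading the generic rank of $M(\mathbf{e})$ backwards into tensorial data. If the growth vector is $(2,3)$ then $\mathrm{rank}\,M\equiv 1$, so $(\nabla_k\mathbf{R})^{T}\mathbf{e}\in\langle \mathbf{R}^{T}\mathbf{e}\rangle$ for every $\mathbf{e}$; equivalently the quadratic forms $\mathbf{R}^{T}\mathbf{e}\wedge(\nabla_k\mathbf{R})^{T}\mathbf{e}\in\Lambda^2\mathbb{K}^3$ vanish identically. Polarizing in $\mathbf{e}$ pins down each $\nabla_k\mathbf{R}$ as a pointwise multiple of $\mathbf{R}$, after which it remains to show these multiples are zero, i.e.\ $\nabla\mathbf{R}=0$. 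Likewise, growth $(2,3,4)$ means $\mathrm{rank}\,M\equiv 2$: the three columns are dependent for all $\mathbf{e}$, forcing $\nabla_1\mathbf{R}$ and $\nabla_2\mathbf{R}$ to be linearly dependent as endomorphisms and hence producing $X\in\underline{T\Sigma}$ with $\nabla_X\mathbf{R}=0$, while $\mathrm{rank}\,M>1$ gives $\nabla\mathbf{R}\neq 0$.

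The step I expect to be the main obstacle is precisely the elimination of the spurious proportional solutions in the reverse directions --- ruling out a nonzero covariant derivative that is a multiple of $\mathbf{R}$, and confirming that the flag truly stabilizes at the asserted step. I would attack this through the holonomy content of the higher flag, as the reference to Ambrose--Singer suggests: a direct bracket computation gives $[X_3,X_4]=\sum_{\beta=1}^3 e_\alpha\,[\mathbf{R},\nabla_1\mathbf{R}]^{\alpha}_{\beta}\,\partial_{e_\beta}$, so that the stabilization assertions (that $\Delta^{(2)}=\Delta'$ in \textit{(ii)}, and $\Delta^{(3)}=\Delta^{(2)}$ in \textit{(iii)}) encode the closure of the infinitesimal holonomy algebra spanned by $\mathbf{R},\nabla\mathbf{R}$ and their iterated brackets and covariant derivatives. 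Combining this closure with the Ricci identity $[\nabla_1,\nabla_2]\mathbf{R}=[\mathbf{R},\mathbf{R}]=0$ constrains the proportionality factors, and I expect that tracking the resulting system, together with a normalization of the holonomy generators, forces them to vanish and so completes the reverse implications.
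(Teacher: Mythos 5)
Your setup, and the parts you actually complete, follow the same route as the paper's own proof: everything is reduced to fibrewise linear algebra on the vertical fields $X_3,X_4,X_5$ of \eqref{eq-vertical_curvature}--\eqref{eq-vertical_covariant_derivatives}, and for \textit{(iii)} both you and the paper pass from a dependence of vertical fields, via linearity in the fibre coordinates, to a linear relation over $C^\infty(\Sigma)$ among $\nabla_1\mathbf{R},\nabla_2\mathbf{R}$, and then invoke tensorality to produce $X$. The difference is that the paper declares \textit{(i)} and \textit{(ii)} immediate and asserts the descent step for \textit{(iii)}, whereas you honestly isolate what remains to be shown: that the proportionality factors arising in the reverse implications vanish, and that the flags stabilize where claimed. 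Those steps are left in your proposal as a program (``I expect that tracking the resulting system \dots forces them to vanish''), so as submitted this is not a proof; the gap is genuine.

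Moreover, the program you outline cannot close it. From $\nabla_k\mathbf{R}=c_k\mathbf{R}$ the Ricci identity gives only $0=[\nabla_1,\nabla_2]\mathbf{R}=(\partial_1c_2-\partial_2c_1)\mathbf{R}$, i.e.\ closedness of the recurrence form $c_1\,dx^1+c_2\,dx^2$, not its vanishing --- and recurrent, non-parallel curvature genuinely occurs. Take a fixed rank-two matrix $R_0\in\mathfrak{gl}(3)$ and $\omega=e^{x^1}R_0\,dx^2$ on the trivial bundle. Then $\omega\wedge\omega=0$, so
\begin{equation*}
\mathbf{R}=e^{x^1}R_0\,dx^1\wedge dx^2\neq 0,\qquad \nabla_1\mathbf{R}=\mathbf{R},\qquad \nabla_2\mathbf{R}=0 .
\end{equation*}
Here $X_4=X_3$ and $X_5=0$, so the derived flag stabilizes at $\Delta'$ and the growth vector is $(2,3)$ even though $\nabla\mathbf{R}\neq 0$; and since $\nabla_{\partial_{x^2}}\mathbf{R}=0$, this same example satisfies the right-hand side of \textit{(iii)} while failing its left-hand side. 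In particular your forward argument for \textit{(iii)} --- that $\nabla\mathbf{R}\neq0$ makes the surviving column $(\nabla_2\mathbf{R})^{T}\mathbf{e}$ generically independent of $\mathbf{R}^{T}\mathbf{e}$ --- is an unproved extra hypothesis, not a consequence. So the obstacle you flagged is not a missing trick but a degeneracy the statement does not exclude; the paper's own two-line proof slides past the identical point (its descent from $C^\infty(M^5)$-dependence of $X_4,X_5$ to a $C^\infty(\Sigma)$-relation also ignores relations involving $X_3$, as well as rank-one configurations such as $\nabla_1\mathbf{R}=uv^T$, $\nabla_2\mathbf{R}=u'v^T$ with $u\not\parallel u'$). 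A completed argument must impose and use a nondegeneracy assumption (e.g.\ that $\mathbf{R}$ is not recurrent and the relevant endomorphisms have rank at least two), which neither your proposal nor the paper's proof does.
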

	\begin{proof}
		The first two statements follow immediately from Lemma \ref{lemma-horizontal_distribution_M5}. For the final statement, suppose that the growth vector is $(2,3,4)$. Then the vectors $X_4,X_5 \in \Delta^{(2)}$ are linearly dependent. Since $\Delta^{(2)} \subset V$, they must be linearly dependent over $V$. But each vector has coefficients that are linear in the fibre variables from (\ref{eq-vertical_covariant_derivatives}), so it follows that they are linearly dependent if and only if there is a linear relation over $\mathcal{O}_{\Sigma}$ between $R^\alpha_{\beta ; 1}$ and $R^\alpha_{\beta ; 2}$ for local coordinates $(x^1,x^2)$ on $\Sigma$. By the tensorality of linear connections in tangential directions to $\Sigma$, this is equivalent to the existence of $X \in \underline{T\Sigma}$ such that $\nabla_X \mathbf{R} = 0$.
	\end{proof}
	It follows then that $\Delta$ admits an $2$-dimensional integral manifold on which through any point if and only if the curvature $\mathbf{R}=0$ of $\nabla$ vanishes; in this case, the integral manifolds are precisely the vertical translates of the zero section $Z_B$ embedded in $\mathrm{Tot}(B)$. There is then the question of how the gauge group interacts with the horizontal distribution. From Proposition \ref{prop-total_space_EDS}, there is a right $\mathrm{G}$-action on $\Delta$, and so it is natural to consider the distribution up to gauge equivalence. We have the following 
	\begin{corollary}
		\label{cor-growth_vector}
		The growth vector of $\Delta$ is gauge invariant.
	\end{corollary}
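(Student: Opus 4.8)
The plan is to reduce the statement to the fact, recalled in \S\ref{s-rank_3}, that the growth vector is a diffeomorphism invariant of a distribution. The essential observation is that a gauge transformation does not merely act abstractly on the connection forms; it is realized by an honest diffeomorphism of the total space $M^5 = \mathrm{Tot}(B)$ that carries the horizontal distribution $\Delta$ of $\nabla$ onto the horizontal distribution $\tilde\Delta$ of the gauge-transformed connection $\tilde\nabla$. Once this is established, diffeomorphism invariance forces the two growth vectors to coincide.

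First I would make the diffeomorphism explicit. A gauge transformation is (locally) a smooth map $g : \Sigma \to \mathrm{G} \subseteq \mathrm{GL}(3)$, and since $\mathrm{G}$ acts linearly and invertibly on the fibres of $B$, it induces a fibre-preserving, fibre-linear map $\Phi_g : M^5 \to M^5$ covering $\mathrm{id}_\Sigma$, acting by $g(x)$ in each fibre over $x \in \Sigma$. This $\Phi_g$ is a diffeomorphism, with inverse $\Phi_{g^{-1}}$.

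Next I would identify the two horizontal distributions under $\Phi_g$. Let $\Omega = \langle \pi^*\omega\rangle_{\mathrm{diff}} = \mathrm{ann}(\Delta)$ and $\tilde\Omega = \langle \pi^*\tilde\omega\rangle_{\mathrm{diff}} = \mathrm{ann}(\tilde\Delta)$ be the Pfaffian systems of $\nabla$ and $\tilde\nabla$. The last assertion of Proposition \ref{prop-total_space_EDS}, namely that $\mathrm{G}$ acts on the generating forms by $g \cdot \omega_\beta = (\Phi_{g^{-1}})^*\omega_\beta$, says precisely that $\Phi_g^*\,\tilde\Omega = \Omega$, equivalently $(\Phi_g)_*\Delta = \tilde\Delta$. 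I would verify this by substituting the transformation law $\tilde\omega = g\omega g^{-1} + dg\cdot g^{-1}$ into the definition (\ref{eq-general_connection_1-forms}) of the forms $\omega_\beta$ and checking that $\Phi_g^*$ returns the $\omega_\beta$ up to the fibrewise $\mathrm{GL}(3)$-action on the index $\beta$, which does not alter the \emph{span} and hence not the EDS. To conclude, naturality of the Lie bracket under the diffeomorphism $\Phi_g$ gives $(\Phi_g)_*[X,Y] = [(\Phi_g)_*X,(\Phi_g)_*Y]$, so inductively $(\Phi_g)_*\Delta^{(k)} = \tilde\Delta^{(k)}$ for all $k$; since a diffeomorphism preserves ranks of sub-bundles, the growth vectors of $\Delta$ and $\tilde\Delta$ agree.

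The main obstacle is the bookkeeping in this middle step: one must correctly match the pullback-on-forms statement of Proposition \ref{prop-total_space_EDS} with the pushforward-on-distributions statement needed to transport Lie brackets, and check that the inhomogeneous term $dg\cdot g^{-1}$ in the gauge law is exactly the piece absorbed by the fibre-linear part of $\Phi_g$ acting on the $de_\beta$ terms. A secondary point to handle with care is that $g$ is genuinely allowed to vary over $\Sigma$, so $\Phi_g$ is linear only fibrewise and mixes the horizontal and vertical directions; but this is precisely what makes it carry $\Delta$ to $\tilde\Delta$ rather than fixing $\Delta$, and is the reason the corollary is not vacuous.
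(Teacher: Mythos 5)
Your argument is correct, but it is not the route the paper takes. The paper's proof is a one-line consequence of the preceding results: by Lemma \ref{lemma-horizontal_distribution_M5} and Theorem \ref{thm-general_growth_vector}, each possible growth vector of $\Delta$ is characterized by conditions on curvature ($\mathbf{R}=0$; $\mathbf{R}\neq 0$ with $\nabla\mathbf{R}=0$; existence of $X \in \underline{T\Sigma}$ with $\nabla_X\mathbf{R}=0$; none of these), and every one of these conditions is preserved by the gauge action, since $\mathbf{R} \mapsto g\mathbf{R}g^{-1}$ and $\nabla_X \mathbf{R} \mapsto g(\nabla_X\mathbf{R})g^{-1}$ transform tensorially. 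You instead realize the gauge transformation as an honest fibre-preserving diffeomorphism $\Phi_g$ of $M^5$ satisfying $(\Phi_g)_*\Delta = \tilde{\Delta}$ --- which is what the last assertion of Proposition \ref{prop-total_space_EDS} encodes --- and then invoke the diffeomorphism invariance of the growth vector recorded in \S\ref{s-rank_3}, transporting the whole derived flag by naturality of the Lie bracket. Your route buys generality and independence: it works verbatim for the rank-$k$ bundle over an $n$-manifold of Proposition \ref{prop-total_space_EDS}, and it does not lean on the case analysis of Theorem \ref{thm-general_growth_vector} (in particular, not on its ``only if'' directions, nor on the implicit claim that its list of cases exhausts the possible growth vectors). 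The paper's route is shorter given what precedes it, and it exhibits the growth-vector strata as explicitly gauge-invariant curvature conditions, which is the form in which the dictionary is exploited for projective surfaces in \S\ref{s-2_plane_distr}. The bookkeeping issue you flag resolves exactly as you indicate: $\Phi_g^*\tilde{\omega}_\beta$ agrees with the $\omega_\alpha$ only up to contraction with an invertible fibrewise matrix in the index $\beta$, which changes the generators but not their span, hence not the annihilated distribution.
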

	\begin{proof}
		The conditions in Lemma \ref{lemma-horizontal_distribution_M5} and Theorem  \ref{thm-general_growth_vector} are all preserved under the action of the gauge group $\mathrm{G}$ on the bundle $B$.
	\end{proof}
	While there is this natural gauge symmetry, it is not in general true that the infinitesimal action of the gauge group acts as an infinitesimal symmetry of the distribution, since $\mathrm{G}$ acts naturally on the fibres of $\pi$ and hence on the vertical bundle $V \subset TM^5$. However, we may interpret the results here in terms of the classical Ambrose-Singer theorem \cite{MR63739}, \cite[Theorem 8.1]{MR0152974}. This theorem tells us that the $\mathfrak{g}$-valued curvature $\mathbf{R}$ determines the holonomy algebra of the relevant principle bundle equipped with connection, which interpreted here controls the first derived $\Delta^\prime$. Then the growth vector restrictions in Theorem \ref{thm-general_growth_vector}, seen as a failure of the derived flag of the horizontal distribution $\Delta$ to fill up the vertical distribution, shows that directions in which the curvature is covariantly constant fail to contribute directions on $\Sigma$ in which horizontal curves may generate elements of the holonomy algebra. 
	\subsection{Immersed surfaces in a 3-manifold}
	\label{ss-immersed_in_3-manifold}
	\par There is another direct analogue of the euclidean story in \S \ref{s-intro}. Let $N$ be a 3-manifold, equipped with a $\mathrm{G}$-structure on the tangent bundle $TN$ with compatible connection $\nabla^\circ$; a natural example is of course $\mathrm{G} = \mathrm{O}(3)$ if $N$ is a riemannian manifold with $\nabla^\circ$ the Levi-Civita connection. Suppose $\Sigma$ is a surface that $\psi : \Sigma \to N$ is an immersion. Then the bundle $B := \psi^*TN \to \Sigma$ is a rank-3 bundle that naturally splits as $B \cong T\Sigma \oplus N_{\Sigma}$, where $N_\Sigma$ is the normal bundle of $\psi(\Sigma) \subset N$. There is an induced connection $\nabla := \psi^*\nabla^\circ$, however, $\nabla$ may only be compatible with a subgroup $\mathrm{H} \subseteq \mathrm{G}$ (possibly trivial). Of course, one could also take other, higher rank $\mathrm{G}$-bundles on $N$ and pull them back to $\Sigma$ with $\psi$ as well, and look for circumstances in which the pullback bundle has a rank-3 summand that controls the geometry induced on $\Sigma$ by the immersion. Then, in the spirit of \S \ref{ss-horizontal}, one obtains a horizontal 2-plane distribution on complement of the zero section of the 5-dimensional total space of the relevant bundle that is determined by the determined by the geometry of the original bundle on $N$ and the immersion $\psi$.
	\par In the remainder of this article, we examine an important case, that of the projective 3-space $N = \mathbb{P}^3$, to which the question of the geometry of immersed surfaces in $\mathbb{P}^3$ has historically played an important role in differential geometry, differential equations, and algebraic geometry. This particular case blends many of the perspectives of the remarks above.
	\section{Projective surfaces \& rank four linear systems of PDE in the plane}
	\label{s-DG}
	\par In this section we review relevant features of the local projective geometry of a surface $\Sigma \subset \mathbb{P}^3$, which parallels the familiar euclidean story in \S \ref{s-intro}. This classical subject is morally equivalent to studying differential geometric features of rank-4 linear systems of PDE in the plane, following the work of Wilczynski \cite{MR0131232,MR1500783} and many of the other old masters, including Darboux, Bol, C\v ech, Cartan. A modern treatment can be found in Sasaki \cite{MR1165117,sasaki1999projective,MR2216951} and Sasaki \& Yoshida \cite{MR960834}. In essence, the surface $\Sigma$ is identified with the projective image of solutions to the rank-4 linear system, and conversely.  Such systems are governed by projective differential invariants determined by the conformal class of the second fundamental form coming from an immersion of a surface into projective 3-space $\mathbb{P}^3$, which are expressed naturally in a special choice of local coordinates known as asymptotic coordinates, the projective analogue of curvature lines on a euclidean surface.

	\subsection{Moving frame method for surfaces in \texorpdfstring{$\mathbb{P}^3$}{P3}}
	\label{ss-surfaces}
	\par A smooth surface $\Sigma$ is \emph{projective} if there is an immersion of $\Sigma$ into some projective space $\mathbb{P}^n$. Let $V^{n+1}$ be a vector space of dimension $n+1$ over $\mathbb{K}$. Recall that the projective space $\mathbb{P}^n$ is constructed as the space of lines in $V$, $\mathbb{P}^n = V - \{0\} / (v \sim k v)$, where $k \in \mathbb{K}^*$. The content of this article is on the local geometry of surfaces $\Sigma$ equipped with an immersion $\psi : \Sigma \to \mathbb{P}^3$, which parallels the classical study of the differential geometry of surfaces in euclidean 3-space. Then $\psi$ is locally a diffeomorphism, and we identify $\Sigma$ locally with its image $\psi(\Sigma) \subset \mathbb{P}^3$. Ultimately, we want to know the projective differential invariants of $\Sigma$ that are independent of the choice of $\psi$. Working in the analytic topology, we may freely shrink the image as necessary. Intrinsically, the existence of such an immersion can be formulated in the language of $\mathrm{G}$-structures \cite[Ch. IV, \S 4]{MR355886} with the group $\mathrm{G} = \mathrm{PSL}(4) = \mathrm{SL}(4) / \mathrm{center}$, and can be effectively described with the data of a projective moving frame on $\Sigma$. We follow the presentation of this material largely from Sasaki \cite[\S 1]{MR1165117} and \cite[Ch. 4, 5]{sasaki1999projective}. Much here can be generalized in a straightforward way to hypersurfaces in $\mathbb{P}^{n}$ for $n \geq 4$, though we do not require this generalization in the context of this article.
	\par The given immersion $\psi$ is naturally identified at each point $p \in \Sigma$ with a projective vector $\psi(p) = e_0(p) \in \mathbb{P}^3$. Up to scaling, $e_0(p)$ can also naturally be identified with its lift to an honest vector in $V^4 - \{0\}$. Working locally, we may choose lifts of tangent vector fields $e_1, e_2 \in \underline{T\Sigma}$ that are linearly independent at each $p \in \Sigma$. Then an additional vector field $e_3 \in V^4$ may be chosen independent of $e_0,e_1,e_2$ at each $p \in \Sigma$, so that the ordered set $e=(e_0,e_1,e_2,e_3)$ provides a local framing of a smooth rank-4 vector bundle $E  \to \Sigma$, called a projective moving frame along $\Sigma$. We will simply call $e$ a projective frame on $\Sigma$, which can naturally be regarded as a local section of the a left\footnote{Our convention as with Sasaki is that the elements of the frame $e$ are regarded as row vectors.} principal $\mathrm{GL}(4)$ bundle over $\Sigma$. It is natural to consider frames such that $\det(e)=1$, so that locally, a projective frame defines a map $e : \Sigma \to \mathrm{PSL}(4)$ that we identify with the same character as the frame $e$ by a misuse of notation. This gives a reduction of the structure group of $E$ from $\mathrm{GL}(4)$ to $\mathrm{PSL}(4)$, which can always be done locally. 
	\par Under these considerations, the projective frame $e$ satisfies the differential equation 
	\begin{equation}
		\label{eq-moving_frame}
		d e = \omega e \, ,
	\end{equation}
	where $\omega = (\omega^\alpha_\beta) \in \mathfrak{sl}(4) \otimes \Omega^1(\Sigma)$ is a Lie algebra valued 1-form on $\Sigma$ that encodes fundamental infinitesimal information of how $\psi$ immerses $\Sigma$ into $\mathbb{P}^3$. Equation (\ref{eq-moving_frame}) is known as the moving frame equation. Then we have that $d e_\beta = \omega^\alpha_\beta e_\alpha$ and $\omega$ is trace free, $\mathrm{tr}(\omega) = \omega^\alpha_\alpha =0$. To signify tangent directions with our indices, we will let greek indices range from $0,\dots,3$, and latin indices range from $1,2$. 
	\par By our construction $\omega$ is the pullback of the Maurer-Cartan form to $\Sigma$ by $e$, and naturally satisfies an integrability condition known as the Maurer-Cartan equation
	\begin{equation}
		\label{eq-mauer_cartan}
		d \omega - \omega \wedge \omega = 0 \, .
	\end{equation} 
	The frame $e$ is determined at each point $p \in \Sigma$ only up to left translation by a group element $g(p) \in \mathrm{PSL}(4)$, acting as $e \mapsto \tilde{e} = g e$. Here $g : \Sigma \to \mathrm{PSL}(4)$ is assumed to be a locally smooth mapping. A simple calculation shows the dependence of $\omega$ on the choice of frame is related by a gauge transformation, i.e., $\omega \mapsto \tilde{\omega}$ is given by
	\begin{equation}
		\label{eq-gauge_transformation}
		\tilde{\omega} = g\omega g^{-1} + dg \cdot g^{-1} \, .
	\end{equation}
	It is clear then that $\omega$ defines the data of a flat connection $\nabla$ on the rank-4 vector bundle $E \to \Sigma$ with connection 1-forms given by the $\omega^\alpha_\beta$ above and curvature tensor $\mathbf{R}^{\circ} = d \omega - \omega \wedge \omega = 0$ given by Equation (\ref{eq-mauer_cartan}); conversely, prescribing the data of a smooth rank-4 $\mathrm{PSL}(4)$ bundle $E \to \Sigma$ with flat connection $\nabla$ gives precisely the data of a smooth immersion $\psi : \Sigma \to \mathbb{P}^3$.
	\subsection{Projective differential invariants of surfaces}
	\label{ss-differential_invariants}
	\par We would like to know the differential invariants of an immersed surface $\psi(\Sigma) \equiv \Sigma \subset \mathbb{P}^3$ that determine the surface up to a projective motion, independent of the choice of immersion $\psi : \Sigma \to \mathbb{P}^3$. We say that a surface $\Sigma \subset \mathbb{P}^3$ is \emph{very general} if the pair $(\Sigma,\psi)$ is determined up to projective motion by the differential geometric quantities discussed later in this section and summarized in Proposition \ref{prop-canonical_invariants}. Strictly speaking, a part of what makes projective surfaces special is that these quantities are \emph{not} truly differential invariants in the usual sense, as there are ``general" surfaces that are \emph{not} very general. Such surfaces, called \emph{projectively applicable}, have projective deformations that preserve the ``differential invariants" given here. But a very general surface has no such deformations, and thus the quantities given in this section are actually differential invariants. This is discussed in more detail in \S \ref{sss-proj_applicable}. 
	\par To begin, we first note the following. By construction, there is a canonical splitting of the vector bundle $E$ as 
	\begin{equation}
		\label{eq-splitting}
		E \cong L \oplus T\mathbb{P}^3|_{\Sigma} \cong L \oplus T\Sigma \oplus L^* \, ,
	\end{equation}
	where $L \to \Sigma$ is the conormal bundle of $\Sigma \subset \mathbb{P}^3$  generated by the immersion $\psi = e_0$, and $L^* \to \Sigma$ is the dual line generated by the vector field $e_3$, canonically isomorphic to the normal bundle $L^* \cong  N_{\Sigma} = T\mathbb{P}^3|_{\Sigma}/ T\Sigma$. In the case that $\mathbb{K} = \mathbb{C}$ and $\Sigma$ is closed, it follows from the celebrated Chow theorem \cite{MR0033093} that $E$ is defined from intrinsic geometric data of $\Sigma$, i.e., it is built from the tangent bundle and divisors\footnote{The bundles in forming the summands of $E$ are generically not trivial, but working locally may we assume that they are all simultaneously trivialized.}. Then we have for some $m > 0$ that $\underline{L} \cong \mathcal{O}_{\Sigma}(-m)$, whence $\underline{L}^* \cong \mathcal{O}_{\Sigma}(m)$. 
	\par From the first row of the moving frame equation (\ref{eq-moving_frame}), we have that 
	\begin{equation*}
		de_0 = \omega^0_0e_0 + \omega^1_0 e_1 + \omega^2_0e_2 \, ,
	\end{equation*}
	so that $\omega^3_0 = 0$, which follows immediately by definition of the vector field $e_3$. Let $\omega^i := \omega^i_0$ be the natural coframe of the tangent fields $e_i$, $i =1,2$. It follows that these smooth 1-forms are independent, basic forms of the bundle $T^*\Sigma \subset E^*$. Then the Maurer-Cartan equation (\ref{eq-mauer_cartan}) gives by exterior differentiation of $\omega^3_0 = 0$ that 
	\begin{equation*}
		\omega^i \wedge \omega^3_i = 0 \, ,
	\end{equation*}
	where $i=1,2$ is summed over. Hence by the Cartan lemma, there are local smooth functions $h_{ij} = h_{ji}$ such that $\omega^3_i = h_{ij} \, \omega^j$. We are thus natually lead to consider the smooth symmetric tensor $h \in S^2(\Sigma) \otimes \underline{L}^*$ given by
	\begin{equation}
		\label{eq-second_fundamental_form}
		h = h_{ij} \, \omega^i \otimes \omega^j \, ,
	\end{equation}
	which is the projective analogue of the second fundamental form in euclidean geometry. We will always assume that $h$ is nondegenerate.
	\par It can be seen that the conformal class of $h$ is independent of the choice of frame for $E$. From the split form of the rank-4 vector bundle $E$ in Equation (\ref{eq-splitting}), it follows that an automorphism preserving this structure is given by
	\begin{equation}
		\label{eq-frame_automorphism}
		g=\left(\begin{array}{lll}
			\lambda & 0 & 0 \\
			\beta & a & 0 \\
			\rho & \gamma & \upsilon
		\end{array}\right) \in \mathrm{PSL}(4) \, .
	\end{equation}
	As $\det(g) = 1$, we thus have $\lambda, \upsilon \in \mathcal{O}^*_{\Sigma}$ and $a : \Sigma \to \mathrm{GL}(2)$. Moreover, $\gamma,\beta$ are valued in $\mathbb{\underline{K}}^2$ and $(\mathbb{\underline{K}}^2)^*$, respectively. Write $a=(a^i_j)$. Then the gauge transformed connection $\tilde{\omega}$ in Equation (\ref{eq-gauge_transformation}) has relevant components given by 
	\begin{equation}
		\label{eq-transformed_components}
		\tilde{\omega}^i = \lambda(a^{-1})^i_j \, \omega^j \, , \hspace{10mm} \tilde{\omega}^3_j = \upsilon^{-1} a^i_j \, \omega^3_i \, .
	\end{equation}
	Hence by applying the Cartan lemma in the new frame again it follows that $\tilde{h}_{ij} = \lambda \upsilon^{-1} a^k_i a^l_j h_{kl}$, or that $(\tilde{h}_{ij}) = \lambda \upsilon^{-1} a(h_{ij})a^T$. Thus we have 
	\begin{equation}
		\label{eq-conformal_II}
		\tilde{h} = \tilde{h}_{ij} \, \tilde{\omega}^i \otimes \tilde{\omega}^j = \lambda \upsilon^{-1} a^k_i a^l_j h_{kl}\, \omega^i \otimes \omega^j = \lambda \upsilon^{-1} \, h \, ,
	\end{equation}
	so that the conformal class $[h]$ is independent of the choice of frame. Let $\mathrm{P} \subseteq \mathrm{PSL}(4)$ be the parabolic subgroup generated by the linear automorphisms of $E$ in Equation (\ref{eq-gauge_transformation}). Then the canonical splitting of $E$ (\ref{eq-splitting}) gives a further reduction of the structure group from $\mathrm{PSL}(4)$ to $\mathrm{P}$; thus the flat connection $\nabla$ following (\ref{eq-gauge_transformation}) becomes a flat $\mathrm{P}$-connection on $E$. 
	\par The discussion here generalizes directly to hypersurfaces in $\mathbb{P}^{n}$, and shows that projective hypersurfaces carry a canonical (smooth) conformal structure, namely the conformal class of the projective second fundamental form. In the higher dimensional case $n \geq 4$, this conformal structure is determined by a normal Cartan connection \cite[Ch. IV, \S 4]{MR355886}, \cite[\S 2]{MR1165117}, which is integrable, or flat, if and only if $h$ is locally conformally flat. This occurs if and only if the hypersurface is contained in an open subset of a hyperquadric.
	\par In this paragraph we take $\mathbb{K} = \mathbb{C}$. Then the conformal structure described above is known as a \emph{holomorphic} conformal structure. As the splitting of $E$ in Equation (\ref{eq-splitting}) defines a canonical Hodge structure of weight two, holomorphic conformal structures have found natural applications in algebraic geometry, first utilized by Sasaki \& Yoshida \cite{MR996019}. Applications to moduli and variation of complex structure for lattice polarized K3 surfaces appear in Matsumoto, Sasaki, \& Yoshida \cite{MR1136204}, Doran \cite{MR1779161}, Doran, Doran \& Harder \cite{MR3822902}, the second author's dissertation \cite{schultz_geometry_2021}, and work by the second author with Malmendier \cite{HCSK324}. These cases generally deal with the higher dimensional case $n \geq 4$, and provide highly nontrivial examples of integrable holomorphic conformal structures. However, it should come as no surprise that the $n=3$ case of surfaces $\Sigma \subset \mathbb{P}^3$ have an induced conformal geometry that is quite distinct from the higher dimensional one.
	\par Let us continue in our study of the induced conformal geometry from a projective hypersurface immersion. We search for frames that simplify the geometric data as much as possible. From the relation of the conformal class $[h]$ to the projective frame in Equation (\ref{eq-conformal_II}) it follows that we may find a frame such that $|\det(h)|=1$. Any gauge transformation $g$ in Equation (\ref{eq-gauge_transformation}) stabilizing this condition will then satisfy $|\lambda \nu|=1=|\det(a)|$. It is trivial that the trace free condition is stabilized under gauge transformation, but we refine this condition by searching for a frame in which $\omega^0_0 + \omega^3_3 = 0$. Write $\beta,\gamma$ in Equation (\ref{eq-gauge_transformation}) as $\beta = (\beta^1,\beta^2)^T \in (\mathbb{\underline{K}}^2)^*$ and $\gamma = (\gamma_1,\gamma_2) \in \mathbb{\underline{K}}^2$. Under gauge transformation, $\omega^0_0 + \omega^3_3$ becomes
	\begin{equation}
		\label{eq-refined_trace}
		\tilde{\omega}_0^0+\tilde{\omega}_{3}^{3}=\omega_0^0+\omega_{3}^{3}+\nu^{-1}  \gamma^i \omega_i^{3}- \beta_i (a^{-1})_j^i \omega^j \, ,
	\end{equation}
	so that we may find a frame in which $\omega^0_0 + \omega^3_3 = 0$ as desired. 
	Now, taking the exterior derivative of $\omega^0_0 + \omega^3_3 = 0$ and utilizing the integrability condition (\ref{eq-mauer_cartan}) with the definition of the $h_{ij}$ yields 
	\begin{equation*}
		\left( h_{i j} \omega_{3}^j-\omega_i^0\right) \wedge \omega^i=0 .
	\end{equation*}
	Applying the Cartan lemma again yields local smooth functions $\ell_{ij}=\ell_{ji}$ such that 
	\begin{equation}
		\label{eq-ell_components}
		h_{i j} \omega_{3}^j-\omega_i^0 = \ell_{i j} \omega^j \, ,
	\end{equation}
	so that another symmetric tensor $\ell = \ell_{ij} \, \omega^i \otimes \omega^j$ may be obtained. However, in general $\ell$ may be degenerate (or even zero). In addition, it is useful to expand the coframe element $\omega^0_3$ as $\omega_{3}^0= - r_j \, \omega^j$, and define
	\begin{equation}
		\label{eq-r_eq}
		r = r_j \, \omega^j \, = - \omega_3^0 \, .
	\end{equation}
	\par Third order information about the immersion $\psi$ is determined as follows. We define $\Phi_{ijk}$ by the equation
	\begin{equation}
		\label{eq-cubic_components}
		\Phi_{ijk}\omega^k = d h_{ij} - h_{kj}\omega^k_j - h_{ik}\omega^k_j \, .
	\end{equation}
	Then $\Phi_{ijk}$ can be seen to be totally symmetric, and satisfies the apolarity condition $h^{ij}\Phi_{ijk}  = 0$ for $k=1,2$ since $|\det(h)|=1$. One then obtains a smooth cubic form as 
	\begin{equation}
		\label{eq-cubic_form}
		\Phi = \Phi_{ijk} \, \omega^i \otimes \omega^j \otimes \omega^k \, .
	\end{equation}
	The smooth cubic form $\Phi$ is known as the Wilczynski-Fubini-Pick form, or the Darboux cubic form, or simply for us the cubic form. An analogous computation as in Equation (\ref{eq-conformal_II}) shows that under a change of frame,
	\begin{equation}
		\label{eq-conformal_III}
		\tilde{\Phi} = \lambda \nu^{-1} \Phi \, .
	\end{equation}
	Then the significance of the quantities $\{\Phi,h,\ell,r\}$ that have been derived from the data of the immersion is given by the following result. 
	\begin{proposition}[Sasaki \cite{MR2216951}, pp. 118]
		\label{prop-projective_invariants}
		The conformal classes $[h]$, $[\Phi]$ of the projective second fundamental form and the Wilczynski-Fubini-Pick form are independent of the choice of projective moving frame for $\Sigma \subset \mathbb{P}^3$, and under a suitable gauge transformation stabilizing the conditions described above, the quantities $\ell$ and $r$ in Equations (\ref{eq-ell_components}, \ref{eq-r_eq}) transform with suitable contractions of $\Phi$. In particular, the quantities $\{\Phi,h,\ell,r\}$ are invariants of projectively equivalent surfaces, and in fact for very general surfaces determine the immersion up to projective automorphism.
	\end{proposition}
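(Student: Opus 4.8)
The plan is to build on the moving-frame computations already in place and to close with an appeal to the fundamental theorem of moving frames. The conformal invariance of $[h]$ and $[\Phi]$ requires no new work: Equations (\ref{eq-conformal_II}) and (\ref{eq-conformal_III}) exhibit the scalings $\tilde{h} = \lambda\upsilon^{-1}h$ and $\tilde{\Phi} = \lambda\nu^{-1}\Phi$ under an arbitrary frame change, so both conformal classes are manifestly frame-independent. What remains is to track $\ell$ and $r$ under the residual gauge freedom, to deduce projective invariance of the whole collection, and to prove the reconstruction statement.

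First I would pin down the residual gauge group stabilizing the two normalizations $|\det h| = 1$ and $\omega^0_0 + \omega^3_3 = 0$. The first condition forces $|\lambda\upsilon| = 1 = |\det a|$, and the second, read off from (\ref{eq-refined_trace}), constrains the off-diagonal parabolic parameters $\beta$ and $\gamma$ of (\ref{eq-frame_automorphism}). The essential observation is that the lower-order entries $\beta,\gamma,\rho$ of $g$ survive as genuine freedom, and it is exactly these that move $\ell$ and $r$. The technical heart of the argument is then to compute $\tilde{\ell}_{ij} - \ell_{ij}$ and $\tilde{r}_j - r_j$: I would differentiate the defining relations (\ref{eq-ell_components}) and (\ref{eq-r_eq}) in the new frame, substitute the gauge law (\ref{eq-gauge_transformation}), and simplify using (\ref{eq-cubic_components}). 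Because $\Phi$ is precisely the covariant derivative of $h$, the inhomogeneous terms assemble into contractions of $\Phi_{ijk}$ against the residual parameters; this is the asserted statement that $\ell$ and $r$ transform by suitable contractions of $\Phi$, and it shows that their failure to be tensorial is measured entirely by $h$ and $\Phi$.

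Projective invariance of $\{\Phi,h,\ell,r\}$ is then immediate: a projective motion of $\Sigma \subset \mathbb{P}^3$ is realized by a constant element of $\mathrm{PSL}(4)$, for which the inhomogeneous term $dg\cdot g^{-1}$ in (\ref{eq-gauge_transformation}) vanishes, so all four quantities are carried to the corresponding invariants of the image. For the reconstruction, I would argue that the data $\{\Phi,h,\ell,r\}$, together with the structure relations forced by the Maurer-Cartan integrability condition (\ref{eq-mauer_cartan}), determines every component of the $\mathfrak{sl}(4)$-valued form $\omega$ up to residual gauge. The uniqueness half of the fundamental theorem for moving frames then yields that any two frames with the same normalized $\omega$ differ by a constant $g \in \mathrm{PSL}(4)$, so the frame $e$ solving $de = \omega e$, and hence the immersion $\psi = e_0$, is determined up to projective automorphism.

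I expect the genuine difficulty to reside in this last step, and specifically in the correct deployment of the \emph{very general} hypothesis. The reconstruction cannot hold verbatim for all surfaces, since projectively applicable surfaces admit nontrivial deformations that preserve $\{\Phi,h,\ell,r\}$ while changing the projective equivalence class; for such surfaces the structure equations fail to rigidify $\omega$. The role of genericity is exactly to exclude these deformations, so the delicate part is to isolate which integrability relations among $\{\Phi,h,\ell,r\}$ must be imposed and to verify that, for a very general surface, they leave no freedom in $\omega$ beyond constant $\mathrm{PSL}(4)$.
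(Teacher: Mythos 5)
The paper does not prove this proposition: it is imported from Sasaki \cite{MR2216951} (the bracketed attribution ``pp.~118'' \emph{is} the proof), and the only ingredients actually established in the text are the frame-change computations (\ref{eq-conformal_II}) and (\ref{eq-conformal_III}) for $[h]$ and $[\Phi]$, which you correctly identify and reuse. So there is no in-paper argument to compare your outline against; what can be said is that your route --- pin down the residual gauge group stabilizing $|\det h|=1$ and $\omega^0_0+\omega^3_3=0$ via (\ref{eq-refined_trace}), track the inhomogeneous transformation of $\ell$ and $r$, then invoke Cartan's uniqueness theorem for maps with prescribed Maurer--Cartan form --- is the standard moving-frames argument, and it is consistent with how the paper frames the surrounding material: projective motions correspond to constant $g \in \mathrm{PSL}(4)$ (so $dg\cdot g^{-1}=0$), and the obstruction to reconstruction is exactly projective applicability as discussed in \S\ref{sss-proj_applicable}.

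Two caveats. First, your proposal is a plan rather than a proof: the computation showing that $\tilde{\ell}-\ell$ and $\tilde{r}-r$ assemble into contractions of $\Phi_{ijk}$ against the residual parameters is described but not executed, and the rigidity step (that the structure equations leave no freedom in $\omega$ beyond a constant element of $\mathrm{PSL}(4)$) is flagged as the hard part and left open; these are precisely the contents of Sasaki's argument that the citation replaces. Second, within this paper the final clause is lighter than you make it: ``very general'' is \emph{defined} in \S\ref{ss-differential_invariants} to mean that $(\Sigma,\psi)$ is determined up to projective motion by the quantities summarized in Proposition \ref{prop-canonical_invariants}, so the reconstruction claim for very general surfaces is definitional here, and the substantive fact --- that such surfaces are generic and that the exceptions are exactly the projectively applicable ones --- is again outsourced to Sasaki and Cartan. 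Your instinct that the real difficulty lives in isolating which integrability relations rigidify $\omega$ is sound, but no argument in this paper supplies it, nor needs to.
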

	The conscious reader will no doubt recognize that $\ell$ and $r$ may defined in an analogous way for hypersurfaces in $\mathbb{P}^{n}$ for $n \geq 4$. Yet in this case, $\ell$ and $r$ can be determined completely in terms of $h$ and $\Phi$, while this is false in our $n=3$ case \cite[\S 2.6]{MR2216951}. We discuss this in \S \ref{sss-proj_applicable}. Regardless, $h$ and $\Phi$ still play a significant role for surfaces, as we will see.
	\par To end this section, we note that a scalar $\mathcal{F}$ may be defined via contraction of $\Phi^{\otimes 2}$ with the inverse $h^{-1} = (h^{ij})$ as 
	\begin{equation}
		\label{eq-Fubini_invariant}
		\mathcal{F} = \Phi_{i j k} \Phi_{p q r} h^{i p} h^{j q} h^{k r}
	\end{equation}
	called the Fubini scalar. The obvious transformation properties of $\mathcal{F}$ under a change of frame imply that the form $\varphi = \mathcal{F}h$ is an absolute invariant.
	\begin{definition}
		\label{def-projective_metric}
		The symmetric tensor $\varphi = \mathcal{F}h$ is an absolute projective invariant called the \emph{projective metric}.
	\end{definition}
	%
	%
	%
	%
	An effective means of computing these quantities is provided by writing the moving frame equation (\ref{eq-moving_frame}) as a system of linear PDEs.
	\subsection{Rank-4 linear systems in the plane}
	\label{ss-rank-4_systems}
	\par  Near $p \in \Sigma$, let $(x,y)$ be local coordinates, and let $\psi : \Sigma \to \mathbb{P}^3$ be an immersion given locally by
	\begin{equation*}
		(x,y) \mapsto z(x,y) = \left[ \, z_0(x,y) \, : \, z_1(x,y) \, : \, z_2(x,y) \, : \, z_3(x,y) \, \right] \, \in \mathbb{P}^3 \, .
	\end{equation*}
	The key insight of Wilczynksi \cite{MR1500783}, and more recently by Sasaki \cite{sasaki1999projective,MR2216951}, Sasaki \& Yoshida \cite{MR960834} (see also Ferapontov \cite{MR1762804}), is that the immersion $\psi$ is completely determined by the existence of a rank four system of linear PDEs
	\begin{equation}
		\label{eq-rank_four_system}
		\scalemath{.9}{
			\begin{aligned}
				& \frac{\partial^2 z}{\partial x^2}= l \frac{\partial^2 z}{\partial x \partial y}+\alpha \frac{\partial z}{\partial x}+b \frac{\partial z}{\partial y}+\mu z, \\
				& \frac{\partial^2 z}{\partial y^2}= m \frac{\partial^2 z}{\partial x \partial y}+c \frac{\partial z}{\partial x}+\delta \frac{\partial z}{\partial y}+\nu z,
		\end{aligned}}
	\end{equation}
	to which the component functions $z_k(x,y)$, $k = 0,\dots,3$, of the projective vector $z(x,y) \in \mathbb{P}^3$ are linearly independent solutions of Equation (\ref{eq-rank_four_system}) near $p \in \Sigma$. Here all coefficient functions in the system are locally smooth. In turn, such a system is completely determined by local differential geometric data (including the data of the immersion), as was elucidated by Sasaki \& Yoshida  \cite{MR960834}. It is thus of interest to examine the transformation properties of the coefficient functions; Wilczynski \cite{MR1500783} determined that the most general form of transformation of the independent and dependent variables in the linear system (\ref{eq-rank_four_system}) that preserves its form is given by $(x,y;z) \mapsto (u,v;f(u,v)z)$, where $x = x(u,v), y= y(u,v)$ defines a local diffeomorphism $\phi :~ \Sigma \to ~\Sigma$ and $f \in \mathcal{O}^*_\Sigma$. A straightforward computation \cite[\S 3]{MR960834} reveals that under such a transformation, the coefficient functions $l,m$ transform as components of an element of the sheaf $S^2(\Sigma)  \otimes \underline{L}^*$, suggesting they are closely related to the conformal structure described in \S \ref{ss-differential_invariants}. 
	\par In particular, the assumption that the image of $\psi$ in $\mathbb{P}^3$ constitutes a surface implies that $e = (\psi, \psi_x, \psi_y, \psi_{xy})$ constitutes a frame of the rank-4 bundle $E \to \Sigma$. Then, writing (\ref{eq-rank_four_system}) in Pfaffian form $de = \omega e$ is nothing but equipping $E$ with a flat connection $\nabla$, whose connection 1-forms are precisely $\omega$.
	An elementary computation yields
	\begin{equation}
		\label{eq-connection_rank_4_sys}
		\omega=\left(\begin{array}{cccc}
			0 & d x & d y & 0 \\
			\mu \, d x & \alpha \, d x & b \, d x & l \, d x+d y \\
			\nu \, d y & c \, d y & \delta \, d y & d x+ m \, d y \\
			* & * & * & *
		\end{array}\right) \; .
	\end{equation}
	The integrability condition (\ref{eq-mauer_cartan}) is necessary and sufficient for the system to be of rank-4, which is necessary and sufficient \cite{MR1500783} for the image of $\psi$ to be a surface $\Sigma \subset \mathbb{P}^3$ in general position. Hence, the data of the rank-4 linear system (\ref{eq-rank_four_system}) determines the data $(\Sigma,\psi)$ up to projective automorphism, or equivalently, $(E,\nabla)$ up to gauge equivalence. In this view, the connection $\nabla$ is more fundamental than the linear system, since the latter determines only a choice of gauge. 
	\par It follows from (\ref{eq-second_fundamental_form}) that the conformal class $[h]$ that determines the projective metric is generated by the normal bundle valued covariant tensor 
	\begin{equation}
		\label{eq-proj_metric_coefficients}
		h = (l \, dx \otimes dx + dx \otimes dy + dy \otimes dx + m \, dy \otimes dy) \otimes \sigma \, ,
	\end{equation}
	where $\sigma \in \underline{L}^*$ is a local nonvanishing section of the normal bundle. Working in locally, we assume that $\underline{L}^* \cong \mathcal{O}_\Sigma$ is trivial. Then the choice of section $\sigma$ simply determines the conformal class $[h]$. The requirement that $h$ be nondegenerate thus means that $l m - 1 \neq 0$. We have the following elementary 
	\begin{proposition}
		\label{prop-proj_gauge_transformation}
		Let $f \in \mathcal{O}^*_\Sigma$. Then under the covariant transformation $z \mapsto ~fz$, the coefficients $l,m$ of $z_{xy}$ in the canonical system (\ref{eq-canonical_sys}) are preserved. Thus the covariant transformations of Wilczynski are contained in the isometry subgroup $\mathrm{Isom}(h) \subset~ \mathrm{P}$ of the tensor $h$.
	\end{proposition}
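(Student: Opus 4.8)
The plan is to compute directly how the rank-4 system transforms under the pure covariant substitution $w = fz$, holding the independent variables $x,y$ fixed, and to extract the coefficients of $w_{xy}$ in the transformed equations for $w_{xx}$ and $w_{yy}$. First I would differentiate $w = fz$ to record the jet relations
\begin{align*}
w &= fz, & w_x &= f_x z + f z_x, \\
w_y &= f_y z + f z_y, & w_{xy} &= f_{xy} z + f_y z_x + f_x z_y + f z_{xy}.
\end{align*}
Collecting the jets into frames $e = (z, z_x, z_y, z_{xy})$ and $\tilde e = (w, w_x, w_y, w_{xy})$, these read $\tilde e = g e$ in the paper's convention, where $g$ is the $4\times4$ matrix that is lower triangular with constant diagonal $f$; this $g$ is exactly the gauge element of $\mathrm{P}$ induced by the substitution.

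The decisive observation is triangularity: since $g$ is lower triangular with diagonal $f$, its inverse is lower triangular with diagonal $f^{-1}$, and $e = g^{-1}\tilde e$. Consequently, when the $z$-jets are re-expressed in terms of the $w$-jets, the quantities $z, z_x, z_y$ carry \emph{no} $w_{xy}$ term, while $z_{xy} = f^{-1} w_{xy} + (\text{terms in } w, w_x, w_y)$. I would then substitute the first equation of (\ref{eq-rank_four_system}), $z_{xx} = l z_{xy} + \alpha z_x + b z_y + \mu z$, into $w_{xx} = f_{xx} z + 2 f_x z_x + f z_{xx}$. The only term capable of producing $w_{xy}$ is $f l\, z_{xy}$, whose contribution is $f l \cdot f^{-1} w_{xy} = l\, w_{xy}$; every remaining term is expressed through $z, z_x, z_y$ alone and hence feeds only into the coefficients of $w, w_x, w_y$. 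Thus the transformed system has the shape $w_{xx} = l\, w_{xy} + \cdots$, giving $\tilde l = l$, and the identical computation with $x,y$ interchanged yields $\tilde m = m$. The canonical normalization (\ref{eq-canonical_sys}) does not disturb these leading coefficients, so the conclusion holds there as well.

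For the isometry assertion I would identify the substitution with its induced gauge element $g \in \mathrm{P}$ in the form (\ref{eq-frame_automorphism}). Reading the diagonal blocks off the jet relations gives $\lambda = f$, $a = f\,I_2$, and $\upsilon = f$, whence $\lambda \upsilon^{-1} = 1$. The transformation law (\ref{eq-conformal_II}) then yields $\tilde h = \lambda \upsilon^{-1} h = h$, so $g$ lies in the isometry subgroup $\mathrm{Isom}(h) \subset \mathrm{P}$. This simultaneously re-derives $\tilde l = l$, $\tilde m = m$ intrinsically, the coordinates $x,y$ being fixed, and shows that the covariant transformations of Wilczynski act as honest isometries of $h$ rather than as mere conformal symmetries.

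The argument is entirely elementary; the only point demanding care is the bookkeeping that guarantees no lower-order jet leaks into the coefficient of $w_{xy}$. This is precisely what the lower-triangularity of $g$ secures, and it is the structural reason the \emph{leading} coefficients $l, m$ are preserved exactly, whereas the lower-order coefficients $\alpha,b,\mu$ (resp. $c,\delta,\nu$) are not. I expect this triangularity observation to be the conceptual crux; once it is in hand, both the explicit coefficient computation and the identification with $\mathrm{Isom}(h)$ follow immediately.
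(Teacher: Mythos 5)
Your proposal is correct and takes essentially the same approach as the paper: a direct computation showing that the substitution $z \mapsto fz$ leaves the coefficients $l,m$ unchanged, followed by identifying the induced gauge element $g_f \in \mathrm{P}$ and concluding $\tilde{h} = h$ from the transformation law (\ref{eq-conformal_II}). The paper compresses both steps into a brief remark (``a simple computation\ldots easily computable''), so your lower-triangularity bookkeeping and the explicit identification $\lambda = \upsilon = f$, $a = fI_2$ are simply a fleshed-out version of the same argument.
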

	\begin{proof}
		A simple computation with the rank-4 system (\ref{eq-rank_four_system}) yields the claim, on the other hand, every such covariant transformation $z \mapsto fz$ yields a corresponding gauge transformation $g_f \in \mathrm{P}$ of the connection form $\omega$ in (\ref{eq-connection_rank_4_sys}) that is easily computable. Computing $\tilde{h}$ from the gauge transformed connection $\tilde{\omega}$ directly shows $\tilde{h} = h$, thus the covariant transformations of Wilczynski act isometrically on $h$. 
	\end{proof}
	The remaining tensorial invariants described in \S \ref{ss-differential_invariants} can all be computed in a straightforward manner, but their expressions can be simplified by working in a preferred system of coordinates called \emph{asymptotic} coordinates. The coordinate lines of such a system are the projective version of lines of curvature in classical euclidean geometry.
	\begin{definition}
		\label{def-asymptotic}
		Let $p \in \Sigma \subset \mathbb{P}^3$. A tangent vector $Y \in T_p \Sigma$ is an \emph{asymptotic direction} at $p$ if $h_p(Y,Y) = 0$. A differentiable curve $C : t \mapsto C(t) \in \Sigma$ is an \emph{asymptotic curve} if the tangent vector field $C'(t)$ is everywhere an asymptotic direction in each $T_{C(t)}\Sigma$. A system of local coordinates $(x,y)$ on $\Sigma$ is \emph{asymptotic} if the coordinate curves $x =~ \mathrm{const.}$, $y = \mathrm{const.}$ are all asymptotic curves.
	\end{definition}
	It is clear that the notions in Definition \ref{def-asymptotic} are all independent of the conformal class $[h]$. Over $\mathbb{C}$, such a system of asymptotic coordinates exists near any $p \in \Sigma$; if one works over $\mathbb{R}$, it is necessary to take $h$ to have signature $(1,1)$. Thus in the latter case, we have the projective analogue of a hyperbolic surface in euclidean space. There are two important consequences of a surface being endowed with asymptotic coordinates $(x,y)$: first, that the asymptotic coordinate lines locally foliate the surface, and hence there is a local splitting of the tangent bundle $T\Sigma = T_x \oplus T_y$ splits locally into the tangent bundles of the foliating curves; second, that locally in asymptotic coordinates the conformal class $[h]$ is generated by the constant tensor
	\begin{equation}
		\label{eq-const_form}
		h = dx \otimes dy + dy \otimes dx \, .
	\end{equation}
	We shall henceforth assume that $\Sigma$ has been equipped with asymptotic coordinates.
	\par If $(x,y)$ are asymptotic coordinates near $p \in \Sigma$, the linear system becomes 
	\begin{equation}
		\label{eq-pre_canonical_system}
		\begin{aligned}
			& \frac{\partial^2 z}{\partial x^2}= \alpha \frac{\partial z}{\partial x}+b \frac{\partial z}{\partial y}+\mu z,  \\
			& \frac{\partial^2 z}{\partial y^2}= c \frac{\partial z}{\partial x}+\delta \frac{\partial z}{\partial y}+\nu z,
		\end{aligned}
	\end{equation}
	and the connection form $\omega$ reduces similarly to 
	\begin{equation}
		\label{eq-connection_pre_canonical_sys}
		\omega=\left(\begin{array}{cccc}
			0 & d x & d y & 0 \\
			\mu \, d x & \alpha \, d x & b \, d x & d y \\
			\nu \, d y & c \, d y & \delta \, d y & d x \\
			* & * & * & \alpha \, dx + \delta \, dy
		\end{array}\right) \; .
	\end{equation}
	Generally the frame $e$ defining (\ref{eq-connection_rank_4_sys}) does not yield an $\mathfrak{sl}(4)$-valued 1-form, and so a reduction of the structure group can be computed by utilizing the so-called normalization factor $\exp(2\theta)$ of the linear system \cite[\S 2]{MR960834}, \cite[\S 2.2]{MR2216951}, defined as
	\begin{equation}
		\label{eq-normalization_factor}
		\exp(2\theta) := \det(e) \in K^{-1}_\Sigma \,.
	\end{equation}
	Then $\theta = \theta(x,y)$ is a locally well-defined function. The classical Jacobi formula for differentiating determinants, when applied to the Maurer-Cartan form $\omega$ implies that 
	\begin{equation*}
		2 \, d\theta = \mathrm{tr}(\omega) \, ,
	\end{equation*}
	so that $\mathrm{tr}(\omega)$ can locally be integrated to find the normalization factor. Applying this to the connection form (\ref{eq-connection_pre_canonical_sys}), we find that 
	\begin{equation}
		\label{eq-norm_asymptotic}
		d \theta = \alpha \, dx + \delta \, dy \, .
	\end{equation}
	Thus, we may write $\alpha = \theta_x$ and $\delta = \theta_y$ in linear system (\ref{eq-pre_canonical_system}).
	\par With normalization factor in hand, this linear system is simplified under the covariant transformation $z \mapsto \exp(-\theta/2)z$ into a system of the form
	\begin{equation}
		\label{eq-canonical_sys0}
		\begin{aligned}
			& \frac{\partial^2 z}{\partial x^2}= b \frac{\partial z}{\partial y}+\widetilde{\mu} z,  \\
			& \frac{\partial^2 z}{\partial y^2}= c \frac{\partial z}{\partial x}+\widetilde{\nu} z,
		\end{aligned}
	\end{equation}
	where $b,c$ are preserved, and $\widetilde{\mu}, \, \widetilde{\nu}$ are respectively differential polynomials in $\mu,b,\theta$ and $\nu,c,\theta$ that can be computed in a straightforward manner. Such a form is called the \emph{canonical} form of the linear system (\ref{eq-rank_four_system}); we shall drop the decorated symbols and focus on linear systems in canonical form
	\begin{equation}
		\label{eq-canonical_sys}
		\begin{aligned}
			& \frac{\partial^2 z}{\partial x^2}= b \frac{\partial z}{\partial y}+\mu z,  \\
			& \frac{\partial^2 z}{\partial y^2}= c \frac{\partial z}{\partial x}+\nu z.
		\end{aligned} 
	\end{equation}
	The connection form then simplifies to 
	\begin{equation}
		\scalemath{.85}{
			\omega =\left(\begin{array}{cccc}
				\label{eq-connection_canonical_sys}
				0 & d x & d y & 0 \\
				\mu \, d x & 0 & b \, d x & d y \\
				\nu \, d y & c \, d y & 0 & d x \\
				\left(b \nu+ \mu_y\right) d x+\left(\mu c+ \nu_x\right) d y & b c \, d x+\left( c_x+\nu\right) d y & \left(\mu+ b_y\right) d x+b c \, d y & 0
			\end{array}\right)} .
	\end{equation}
	The canonical system and its associated connection form will be of fundamental importance in our analysis. It can be shown that under an arbitrary reparameterization preserving the asymptotic foliations of $\Sigma$ that the coefficients $b,c$ in Equation (\ref{eq-canonical_sys}) transform as tensor densities of weight $(-1,2)$ and $(2,-1)$ respectively \cite[(5.8)]{Ovsienko_Tabachnikov_2004}, \cite[(2.6)]{MR1762804}, where the ordered grading comes from the splitting of the tangent bundle. We have the following
	\begin{proposition}
		\label{prop-canonical_invariants}
		The fundamental projective invariants (\ref{prop-projective_invariants}), (\ref{def-projective_metric}) of a surface $\Sigma \subset \mathbb{P}^3$ are given in asymptotic coordinates from the canonical system as follows: the projective metric and the cubic form are
		\begin{equation}
			\label{eq-canonical_invariants}
			\begin{aligned}
				\varphi &= 8  bc (dx \otimes dy + dy \otimes dx) \, , \\
				\Phi &= -2  b  dx^{\otimes 3} -2 c  dy^{\otimes 3} \, .
			\end{aligned}
		\end{equation}
		The remaining invariants $\ell,r$ are simple differential polynomials in $b,c,\mu,\nu$, and can be found in \cite[Equation (2.15)]{MR2216951}.
	\end{proposition}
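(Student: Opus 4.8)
The plan is to carry out a direct computation in the canonical gauge, reading all the required data off the connection form (\ref{eq-connection_canonical_sys}). First I would record the coframe $\omega^1 = \omega^1_0 = dx$, $\omega^2 = \omega^2_0 = dy$, together with the tangential connection $1$-forms $\omega^1_1 = \omega^2_2 = 0$, $\omega^2_1 = b\,dx$, $\omega^1_2 = c\,dy$, and the forms $\omega^3_1 = dy$, $\omega^3_2 = dx$. Feeding the last two into the Cartan-lemma relation $\omega^3_i = h_{ij}\omega^j$ attached to (\ref{eq-second_fundamental_form}) recovers $h_{11} = h_{22} = 0$ and $h_{12} = h_{21} = 1$, confirming that in asymptotic coordinates the projective second fundamental form is the constant tensor (\ref{eq-const_form}).

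Because the $h_{ij}$ are constant, $dh_{ij} = 0$ and the defining relation (\ref{eq-cubic_components}) collapses to the purely algebraic contraction $\Phi_{ijk}\omega^k = -h_{kj}\omega^k_i - h_{ik}\omega^k_j$. Evaluating this on the three independent index pairs $(i,j) \in \{(1,1),(1,2),(2,2)\}$ and substituting the tangential forms above yields $\Phi_{111} = -2b$, $\Phi_{222} = -2c$, with every mixed component vanishing; assembling these via (\ref{eq-cubic_form}) produces the stated expression for $\Phi$. The total symmetry of $\Phi_{ijk}$ and the apolarity identity $h^{ij}\Phi_{ijk} = 0$ hold automatically here, and serve as a useful consistency check on the index bookkeeping.

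For the projective metric I would compute the Fubini scalar (\ref{eq-Fubini_invariant}) using the inverse $h^{-1} = (h^{ij})$ with $h^{11} = h^{22} = 0$ and $h^{12} = h^{21} = 1$. Since only the pure components $\Phi_{111}$ and $\Phi_{222}$ are nonzero and each factor $h^{ip}$ forces an index $1$ to pair with an index $2$, the only surviving terms are the two cross contractions $\Phi_{111}\Phi_{222}(h^{12})^3$ and $\Phi_{222}\Phi_{111}(h^{21})^3$, giving $\mathcal{F} = 8bc$. The projective metric is then $\varphi = \mathcal{F}h$ by Definition \ref{def-projective_metric}, which is exactly the first line of (\ref{eq-canonical_invariants}).

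Finally, for $\ell$ and $r$ I would note that the canonical connection (\ref{eq-connection_canonical_sys}) has $\omega^0_0 = \omega^3_3 = 0$, so the normalization $\omega^0_0 + \omega^3_3 = 0$ underlying the derivation of (\ref{eq-ell_components}) remains in force. Reading off $\omega^0_1 = \mu\,dx$, $\omega^0_2 = \nu\,dy$, and the bottom-left entry $\omega^0_3 = (b\nu + \mu_y)\,dx + (\mu c + \nu_x)\,dy$, the defining relations (\ref{eq-ell_components}) and (\ref{eq-r_eq}) express the $\ell_{ij}$ and $r_j$ directly as differential polynomials in $b,c,\mu,\nu$, reproducing \cite[Equation (2.15)]{MR2216951}. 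I do not expect a genuine obstacle: the whole argument is a bookkeeping exercise, and the only point demanding care is the index placement in the frame convention $de_\beta = \omega^\alpha_\beta e_\alpha$ (so that row $\beta$ of the matrix lists the $\omega^\alpha_\beta$), together with keeping track of the $\underline{L}^*$-twist implicit in (\ref{eq-cubic_components}), both of which are rendered harmless by the constancy of $h_{ij}$ in asymptotic coordinates.
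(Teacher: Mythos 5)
Your proposal is correct and follows exactly the paper's route: compute $\Phi$ directly from the defining relation (\ref{eq-cubic_components}) using the canonical connection form (\ref{eq-connection_canonical_sys}), then obtain $\varphi = \mathcal{F}h$ by contracting with the constant tensor $h$ of (\ref{eq-const_form}) via the Fubini scalar (\ref{eq-Fubini_invariant}). Your only deviation is to spell out the details the paper leaves implicit, including the correct symmetric reading $\Phi_{ijk}\omega^k = dh_{ij} - h_{kj}\omega^k_i - h_{ik}\omega^k_j$ of (\ref{eq-cubic_components}) (the paper's displayed version has an index typo), which your computation confirms is the intended one since it is the only reading yielding a totally symmetric $\Phi_{ijk}$.
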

	\begin{proof}
		The computation of $\Phi$ comes directly from the definition (\ref{eq-cubic_components}) by utilizing the connection form (\ref{eq-connection_canonical_sys}). Then the projective metric $\varphi = \mathcal{F}h$ comes by the Fubini scalar (\ref{eq-Fubini_invariant}) computed from $\Phi$ and the constant tensor $h$ in (\ref{eq-const_form}).
	\end{proof}
	\begin{lemma}
		\label{lemma-integrability}
		A system of linear PDEs of the form (\ref{eq-canonical_sys}) defines a surface $\Sigma \subset \mathbb{P}^3$ equipped with asymptotic local coordinates $(x,y)$ if and only if the following conditions hold:
		\begin{equation}
			\label{eq-integrability_conditions}
			\begin{aligned}
				-2  b_y  \nu- b \nu_y- \mu_{yy}+\mu_x  c+2 \mu  c_x+ \nu_{xx} &= 0 \; , \\
				-2 b_y  c-b c_y + c_{xx}+2 \nu_x &= 0 \;, \\
				-2\mu_y-b_{yy}+ b_x  c+2  b c_x &=0 \; . 
			\end{aligned}
		\end{equation}
	\end{lemma}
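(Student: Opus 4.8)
The plan is to recognize Lemma~\ref{lemma-integrability} as nothing more than a flatness computation. By the discussion following the connection form (\ref{eq-connection_rank_4_sys}), a rank-four system of the shape (\ref{eq-rank_four_system}) defines a surface $\Sigma \subset \mathbb{P}^3$ in general position precisely when its connection form $\omega$ satisfies the Maurer--Cartan (integrability) equation (\ref{eq-mauer_cartan}), i.e. $d\omega - \omega\wedge\omega = 0$; and since the canonical normal form (\ref{eq-canonical_sys}) already carries $h = dx\otimes dy + dy\otimes dx$ as in (\ref{eq-const_form}), the coordinate directions are automatically $h$-null, so $(x,y)$ are asymptotic the moment a surface is produced. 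Thus it suffices to compute the curvature $\mathbf{R}^\circ = d\omega - \omega\wedge\omega$ of the explicit connection form (\ref{eq-connection_canonical_sys}) and record the vanishing of its components.

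To carry this out I would write $\omega = A\,dx + B\,dy$, where $A$ and $B$ are the $4\times 4$ matrices of coefficient functions read off from (\ref{eq-connection_canonical_sys}). Since $\Sigma$ is two-dimensional every $2$-form is a multiple of $dx\wedge dy$, and a short manipulation gives $\mathbf{R}^\circ = \bigl(B_x - A_y - [A,B]\bigr)\,dx\wedge dy$ with $[A,B] = AB - BA$. The remaining work is the entrywise evaluation of the single $4\times 4$ matrix $B_x - A_y - [A,B]$.

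The key structural observation, which I expect to keep the argument short, is that most of these sixteen scalar entries vanish \emph{identically} rather than conditionally. The top row is zero, reflecting the tautological identity $d(de_0)=0$; the middle two rows vanish because there the blocks of $B_x - A_y$ cancel exactly against the corresponding commutator blocks of $[A,B]$ --- these encode $d(de_1)=0$ and $d(de_2)=0$, built into the first three rows of $\omega$, and carry no content. The only surviving entries sit in the bottom row, in the columns indexed by $e_0, e_1, e_2$, i.e. they are the coefficients of $e_0,e_1,e_2$ in $d(de_3)$; expanding them produces exactly the three left-hand sides appearing in (\ref{eq-integrability_conditions}). Hence $\mathbf{R}^\circ = 0$ if and only if those three equations hold, which is the assertion.

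I do not anticipate a genuine obstacle: the content is bookkeeping, and the only place that demands care is confirming the bottom row of $\omega$ --- the entries determined by differentiating $z_{xy}$ through the canonical system --- since precisely those feed the nontrivial compatibility relations. As a cross-check one can bypass the connection entirely and compute $z_{xxy} = \partial_y z_{xx}$ and $z_{xyy} = \partial_x z_{yy}$ directly from (\ref{eq-canonical_sys}), then equate the two expressions for the fourth-order mixed partial $z_{xxyy}$; matching coefficients against the independent functions $z, z_x, z_y, z_{xy}$ reproduces the same three identities, confirming that the flatness computation and the naive integrability analysis agree.
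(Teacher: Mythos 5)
Your proposal is correct and follows essentially the same route as the paper: the paper's proof likewise obtains the three equations as the vanishing of the curvature $\mathbf{R} = d\omega - \omega\wedge\omega$ of the canonical connection form (\ref{eq-connection_canonical_sys}), and handles sufficiency via the equivalence of (\ref{eq-integrability_conditions}) with the rank-four condition and the mixed-partials compatibility $(z_{xx})_{yy} = (z_{yy})_{xx}$ --- exactly the cross-check you describe. Your entrywise analysis (top and middle rows of $B_x - A_y - [A,B]$ vanishing identically, the bottom row yielding the three conditions) is a correct and slightly more explicit account of the same computation.
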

	\begin{proof}
		The necessity is nothing but the integrability condition $\mathbf{R} =0$ applied to the connection form of the canonical system (\ref{eq-connection_canonical_sys}). The sufficiency is guaranteed by noting that the rank of the canonical system is four if and only if (\ref{eq-integrability_conditions}) holds, and that these conditions are equivalent to the compatibility conditions from the equality of mixed partials $(z_{xx})_{yy} = (z_{yy})_{xx}$.
	\end{proof}
	In this way, the integrability conditions should be viewed as the projective version of the Gauss-Codazzi equations (\ref{eq-codazzi}) in classical differential geometry of surfaces in euclidean space. We remark trivially that such relations are satisfied for \emph{any} $\Sigma \subset \mathbb{P}^3$; that is, we learn nothing of the local geometry from these equations being satisfied. In \S \ref{s-2_plane_distr}, we will give another explanation of the origin of these equations as a consequence of the differential Bianchi identity applied to a natural connection on a rank-3 sub-bundle of $E$.
	\subsection{Projective geometry associated to the vanishing of invariants}
	\label{ss-geometry_proj_surf}
	It follows from Proposition \ref{prop-projective_invariants} that very generally, a projective surface $\Sigma \subset \mathbb{P}^3$ is determined up to automorphism by the projective metric $\varphi$ and cubic form $\Phi$. In this case, we have $bc \neq 0$, and $b,c$ do not depend on any auxiliary parameters. Inherently, the canonical system (\ref{eq-canonical_sys}) is sensitive to the choice of immersion; this data is reflected in $\mu,\nu$, as will be explained below. This distinction becomes crucial when considering global aspects of the immersion $\psi$, particularly in the complex analytic category where it is common to immerse surfaces in projective space by Fuchsian systems of PDEs. In this case, fundamental solutions of the canonical system are multivalued, and it is of great importance to understand both the local and global aspects of the associated monodromy groups and their representation on fibres of the bundle $E$. See for example Sasaki \& Yoshida \cite{MR996019}, Hara, Sasaki, \& Yoshida \cite{MR1040172}, Matsumoto, Sasaki \& Yoshida \cite{MR1136204}, Clingher, Doran, \& Malmendier \cite{MR3767270}, Malmendier \& the second author \cite{MR4494119}, etc.
	\par Even in the case that one is only concerned with the geometry of the surface $\Sigma$ itself, there is still the chance that one may deform the surface $\Sigma$ while keeping the projective metric $\varphi$ and the cubic $\Phi$ the same. Such surfaces are called \emph{projectively applicable}, and were studied for example by Cartan \cite{Cartan_1920}. This geometric behavior is unique in dimension two. We will give more detail in \S \ref{sss-proj_applicable}.
	\par There is a very rough stratification of projective surfaces based on the relative vanishing of the invariants $\varphi,\Phi$, the proofs of which and references can be found in many places. See for example \cite{MR2216951} and references therein. 
	\subsubsection{Quadric surfaces}
	\label{sss-quadrics}
	Quadric surfaces in $\mathbb{P}^3$ are characterized by the vanishing of the cubic $\Phi$. In fact, a similar statement holds for hypersurfaces in $\mathbb{P}^{n}$; see \cite[Theorem 2.3]{MR2216951} for a simple proof. It follows that quadric surfaces in $\mathbb{P}^3$ are the $2$-dimensional projective analogue of conformally flat spaces in higher dimension \cite{MR31310}. Thus $b=c=0$, and so we also have that the projective metric $\varphi = 0$ vanishes. 
	\par The integrability conditions (\ref{eq-integrability_conditions}) under the quadric condition $b=c=0$ simplify greatly to $\mu_y = \nu_x = 0$; thus, the canonical system decouples into a system of second order ODEs:
	\begin{equation}
		\label{eq-quadric_sys}
		\begin{aligned}
			& \frac{\partial^2 z}{\partial x^2}= \mu(x) z,  \\
			& \frac{\partial^2 z}{\partial y^2}= \nu(y) z.
		\end{aligned}
	\end{equation}
	Then the connection form (\ref{eq-connection_canonical_sys}) decomposes by inspection into the (outer) tensor product of the connection form of the respective ODEs. Over $\mathbb{C}$, when the ODEs are Fuchsian, an analogous statement holds for the associated monodromy representations, and the classical Gauss-Schwarz theory expresses $\mu,\nu$ in terms of Schwarzian derivatives. Such decouplings play an important role in applications to subtle algebro-geometric aspects of lattice polarized K3 surfaces of Picard rank-18 in the references above preceding this section.
	\par By a suitable rescaling of $z$, we may assume $\mu = \nu = 0$. This simplifies the equations dramatically, $z_{xx} = 0 = z_{yy}$, which is trivially solved. The independent solutions $z_0 = xy, \, z_1=x, \, z_2=y, \, z_3=1$ satisfy the quadratic equation $z_0z_3 = z_1z_2$, which for $\mathbb{K} = \mathbb{R}$ defines the unique quadric surface of signature $(1,1)$ in $\mathbb{P}^3$ up to projective automorphism. For $\mathbb{K} = \mathbb{C}$, all quadrics are projectively equivalent to $z_0z_3 = z_1z_2$. We again stress that such a rescaling is particularly violent at the level of the differential systems, since for example in the complex analytic category it discards \emph{all} local and global data about the associated (Fuchsian) system. This discussion reflects the fact that quadric surfaces in $\mathbb{P}^3$ are doubly ruled surfaces. More generally, we have the following.
	\subsubsection{Ruled Surfaces}
	\label{sss-ruled_surfaces}
	Recall that a projective surface $\Sigma$ is \emph{ruled} if it contains a 1-parameter family of projective lines\footnote{When we say that $\Sigma$ is \emph{ruled} we mean that it is singly ruled, ruling out the possibility that $\Sigma$ be a quadric.}. Ruled surfaces play a very important role in the theory of surfaces in $\mathbb{P}^3$ via constructions known as line congruences \cite{MR2216951}. In practice, such a surface can be specified by a pair of curves $s(x),t(x)$ in $\mathbb{P}^3$ with common local parameter $x$ and line joining the respective points on each curve parameterized by $y$, so that an immersion $\psi : \Sigma \to \mathbb{P}^3$ is given by
	\begin{equation}
		\label{eq-ruled_surface_immersion}
		\psi(x,y) = s(x) + yt(x) \, \in \mathbb{P}^3 \, .
	\end{equation}
	\par It is straightforward to show that in this situation, the parameters $(x,y)$ give asymptotic coordinates on $\Sigma$. From Lemma \ref{lemma-integrability}, it follows that each component $z_k(x,y)$ of $\psi$ is a solution of the canonical system (\ref{eq-canonical_sys}) of $\Sigma$; insisting that each component be linear in $y$ simply means that $c=0=\nu$. Clearly, one could swap the roles of $x$ and $y$ in the argument above and get an analogous statement. Thus, a ruled surface $\Sigma$ has vanishing projective metric $\varphi = 0$ and cubic form $\Phi  \neq 0$. By Proposition \ref{prop-projective_invariants} it follows this condition in fact characterizes ruled surfaces.
	\par The integrability conditions (\ref{eq-integrability_conditions}) simplify in this case to $\mu_{yy} = 0$, $b_{yy}+2\mu_y = 0$. These equations can easily be integrated to find that the canonical system of a ruled surface is of the form
	\begin{equation}
		\label{eq-canonical_sys_ruled}
		\frac{\partial^2 z}{\partial x^2}= (\alpha(x)y^2+\beta(x)y+\gamma(x))\frac{\partial z}{\partial y} + (-\alpha(x)y+\delta(x)) z  \, , \hspace{3mm}
		\frac{\partial^2 z}{\partial y^2} = 0 \, .
	\end{equation}
	Evaluating (\ref{eq-canonical_sys_ruled}) in the form of (\ref{eq-ruled_surface_immersion}) shows that the curves $u(x),v(x)$ satisfy the second order system of coupled linear ODEs
	\begin{equation}
		\label{eq-ruled_surface_ODE_sys}
		\begin{aligned}
			& \frac{d^2 s}{d x^2} = \delta(x)u(x) + \gamma(x)v(x)  \, ,\\
			& \frac{d^2 t}{d x^2} = -\alpha(x)u(x) + (\beta(x)+\delta(x))v(x) \, .
		\end{aligned} 
	\end{equation}
	Ruled surfaces have their geometry determined by the curves $s(x),t(x)$, and the dependencies in the ODEs above. Writing $S(x) = (s(x),t(x))^T$ and $\cdot = d/dx$,  (\ref{eq-ruled_surface_ODE_sys}) becomes a matrix equation $\ddot S = QS$, where $Q = Q dx^{\otimes 2}$ is regarded as a matrix valued quadratic differential. Wilczynski showed \cite{MR0131232} that ruled surfaces have differential invariants generated in terms of invariant data of $Q dx^{\otimes 2}$, such as the determinant and trace. See also Sasaki \cite[\S 1.2]{MR2216951}.
	\subsubsection{Projectively applicable surfaces}
	\label{sss-proj_applicable}
	Examining the integrability conditions in Lemma \ref{lemma-integrability} reveals that $b,c$ do not determine $\mu,\nu$ uniquely, and so there is a chance to deform $\Sigma$ without changing $\varphi$ and $\Phi$. Such surfaces are called projectively applicable \cite{Cartan_1920}, and play an interesting role in the geometry of projective surfaces. In this section we follow the presentation of Sasaki \cite[\S 2.6]{MR2216951}.
	\begin{definition}
		\label{def-proj_deformation}
		A morphism $\mathscr{F} : \Sigma \to \Sigma^\prime$ of projective surfaces is said to be a \emph{projective deformation} of $\Sigma$ into $\Sigma^\prime$ if for all $p \in \Sigma$, there exists a projective transformation $g_p \in \mathrm{Aut}(\mathbb{P}^3) \cong \mathrm{PSL}(4)$ such that $g_p(p) = \mathscr{F}(p)$ and that for any smooth curve $C \subset \Sigma$ passing through $p$, the curve $g_p(C)$ has second order contact at $\mathscr{F}(p) \in \Sigma^\prime$ with the image curve $\mathscr{F}(C) \subset \Sigma^\prime$.
	\end{definition}
	\begin{definition}
		\label{def-proj_applicable}
		A projective surface $\Sigma \subset \mathbb{P}^3$ is \emph{projectively applicable} if it possesses a nontrivial projective deformation.
	\end{definition}
	\par The significance of projective applicability is the following
	\begin{proposition}
		\label{prop-proj_applicable}
		Suppose that $\mathscr{F} : \Sigma \to \Sigma^\prime$ is a projective deformation, and suppose that $(x,y)$ and $(x^\prime,y^\prime)$ are asymptotic local coordinates on $\Sigma$ and $\Sigma^\prime$, respectively. Then there is a canonical splitting $\mathscr{F}^*T\Sigma^\prime = \mathscr{F}^*(T_{x^\prime} \oplus T_{y^\prime}) = T_{x} \oplus T_{y}$, i.e., a projective deformation carries asymptotic directions to asymptotic directions. Moreover, in the respective canonical systems (\ref{eq-canonical_sys}) for $\Sigma,\Sigma^\prime$, we have $b = b^\prime$ and $c = c^\prime$ relative to the respective asymptotic coordinates. Consequently, $\mathscr{F}^*\varphi^\prime = \varphi$ and $\mathscr{F}^*\Phi^\prime = \Phi$. The conditions above are also sufficient for the existence of a projective deformation.
	\end{proposition}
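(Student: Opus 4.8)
\textbf{Setup and strategy.} The plan is to reduce the entire statement to an analysis of the compatibility conditions (\ref{eq-integrability_conditions}) under the constraints imposed by Definition~\ref{def-proj_deformation}, working throughout in asymptotic coordinates so that $h$ takes the constant form (\ref{eq-const_form}). The key observation is that a projective deformation is, pointwise, an osculating projective transformation: by hypothesis the group element $g_p$ matches $\mathscr{F}$ to first order and matches the image curves of $\Sigma$ to \emph{second} order at $\mathscr{F}(p)$. Since the projective frame $e = (\psi, \psi_x, \psi_y, \psi_{xy})$ and the tensor $h$ are manufactured entirely from the second-order jet of the immersion (recall $h$ arises from the Cartan lemma applied to $\omega^3_i$, i.e.\ from how $\psi_{xx}, \psi_{yy}$ project onto the normal direction), second-order contact is exactly the condition that forces the conformal class $[h]$ to be preserved by $\mathscr{F}$. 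I would make this precise by writing $g_p \cdot e(p) = e'(\mathscr{F}(p))$ up to the appropriate order in the respective jet bundles.

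\textbf{First I would establish the splitting of tangent directions.} Since asymptotic directions are characterized intrinsically by $h_p(Y,Y)=0$ (Definition~\ref{def-asymptotic}) and depend only on $[h]$, and since second-order contact preserves the second fundamental form up to scale by the discussion above, I would argue that $\mathscr{F}$ must carry the null cone of $h$ to the null cone of $h'$. In the complex-analytic setting (or the signature-$(1,1)$ real setting) this null cone consists of exactly the two asymptotic line-fields, so $\mathscr{F}$ permutes or preserves them; after possibly relabeling $x' \leftrightarrow y'$ we obtain the canonical splitting $\mathscr{F}^*(T_{x'} \oplus T_{y'}) = T_x \oplus T_y$. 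This is the geometrically transparent part.

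\textbf{Next I would extract the equalities $b=b'$, $c=c'$.} Having matched asymptotic foliations, both $\Sigma$ and $\Sigma'$ are presented in canonical form (\ref{eq-canonical_sys}) with connection (\ref{eq-connection_canonical_sys}). I would compare the cubic forms $\Phi, \Phi'$ using Proposition~\ref{prop-canonical_invariants}, where $\Phi = -2b\,dx^{\otimes 3} - 2c\,dy^{\otimes 3}$; second-order contact preserves $\Phi$ up to the universal scaling $\lambda\nu^{-1}$ of (\ref{eq-conformal_III}), and the normalization $|\det(h)|=1$ together with the covariant transformation law pins down the scaling factor, forcing $b=b'$ and $c=c'$ in the matched asymptotic coordinates. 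From this, $\mathscr{F}^*\varphi' = \varphi$ follows immediately since $\varphi = 8bc(dx\otimes dy + dy\otimes dx)$ is a function of $b,c$ and $h$ alone, and $\mathscr{F}^*\Phi' = \Phi$ is then automatic.

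\textbf{For sufficiency} I would run the argument in reverse, which is where the genuine content lies. Given two canonical systems sharing the same $b,c$ in compatible asymptotic coordinates, the integrability conditions (\ref{eq-integrability_conditions}) constrain the remaining freedom to the pair $(\mu,\nu)$ versus $(\mu',\nu')$; I would show that any admissible second solution $(\mu',\nu')$ of the third-order system (\ref{eq-integrability_conditions}) with the same $b,c$ defines, via the frames $e, e'$, a pointwise family $g_p = e'(p)\,e(p)^{-1} \in \mathrm{PSL}(4)$ realizing the second-order contact of Definition~\ref{def-proj_deformation}. \textbf{The main obstacle} I anticipate is precisely this sufficiency direction: one must verify that the osculating transformations $g_p$ assemble into a genuine morphism of surfaces and that second-order contact holds along \emph{every} curve through $p$ rather than merely along the coordinate curves---this requires checking that the full second-order jets of $e$ and $e'$ are conjugate under $g_p$, which reduces to confirming that the difference $(\mu'-\mu, \nu'-\nu)$ lies in the admissible deformation space cut out by (\ref{eq-integrability_conditions}). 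I would isolate this as the crux and defer the detailed jet computation to a referenced calculation in Sasaki \cite[\S 2.6]{MR2216951}.
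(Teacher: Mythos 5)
The necessity half of your argument has a genuine gap at its central step, and the gap is a circularity. You derive $b=b'$, $c=c'$ from the claim that ``second-order contact preserves $\Phi$ up to the universal scaling $\lambda\nu^{-1}$ of (\ref{eq-conformal_III}).'' But $\Phi$ is \emph{third-order} data of the immersion: its components are defined through $dh_{ij}$ in Equation (\ref{eq-cubic_components}), so the 2-jet matching that Definition \ref{def-proj_deformation} provides does not constrain it by itself. Indeed, $\mathscr{F}^*\Phi'=\Phi$ appears in the proposition as a \emph{consequence} of $b=b'$, $c=c'$; assuming it in order to derive those equalities is assuming the conclusion. Moreover, (\ref{eq-conformal_III}) is a gauge-transformation law for frame changes of a \emph{single} immersion; it says nothing on its own about a map between two different surfaces. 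The paper's proof supplies exactly the missing mechanism: after identifying $(x,y)=(x',y')$ via the (correctly argued) preservation of asymptotic directions, the pointwise second-order contact forces $z' = fz$ up to a projective transformation for some nonvanishing scalar $f$, and then a direct computation in the style of Proposition \ref{prop-proj_gauge_transformation} --- expanding $(fz)_{xx}$ and $(fz)_{yy}$ in the frame of $fz$ --- shows that the covariant rescaling leaves the coefficients $b,c$ untouched: only the $z_x$-, $z_y$- and $z$-coefficients shift, which is precisely why $\mu,\nu$ can move under a deformation while $b,c$ cannot. That computation, not a transformation law for $\Phi$, is the heart of the proof, and your proposal never performs it or anything equivalent to it.

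Your sufficiency paragraph, by contrast, is essentially the paper's argument run in reverse: build the pointwise transformations $g_p$ from the frames, and observe that the mismatch $(\mu'-\mu)z$, $(\nu'-\nu)z$ in the second derivatives is proportional to the position vector and hence absorbed by the rescaling freedom of projective lifts --- this is what the paper means by ``the derivatives of the scaling factor $f$ are not constrained at the point of contact.'' You correctly flag the verification of second-order contact along \emph{all} curves (not just coordinate curves) as the crux, but deferring it to Sasaki leaves it unproven; the same frame-plus-rescaling computation that repairs the necessity gap is what closes this step as well.
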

	\begin{proof}
		That $\mathscr{F}$ is a projective deformation immediately implies the statement about preserving asymptotic directions by the necessary second order contactedness. Then locally we may regard $(x,y) = (x^\prime,y^\prime)$, so that $\Sigma,\Sigma^\prime$ share a common asymptotic parameterization. It follows that up to a projective transformation, we have $z^\prime = fz$ for some nonvanishing $f$. Proposition \ref{prop-proj_gauge_transformation} shows that such a covariant transformation acts isometrically on the constant tensor $h$ in (\ref{eq-const_form}). Then an argument similar to that result shows that $b = b^\prime$ and $c = c^\prime$. Hence $\mathscr{F}^*\varphi^\prime = \varphi$ and $\mathscr{F}^*\Phi^\prime = \Phi$. The sufficiency follows by reversing the argument and noting that the derivatives of the scaling factor $f$ are not constrained at the point of contact. 
	\end{proof}
	\par Elementary examples of projectively applicable surfaces are quadrics and ruled surfaces, due to the arbitrary functions $\mu(x),\nu(y),\gamma(x)$ that appear in the $z$-coefficients of (\ref{eq-quadric_sys}) and (\ref{eq-canonical_sys_ruled}) respectively. These surfaces have vanishing projective metric $\varphi = 0$. There are many other examples of generically nonruled projectively applicable surfaces that are derived from multiparameter families of bivariate hypergeometric functions, such as Appell's $F_2$ and $F_4$ functions. See for example \cite[Example 2.20]{MR2216951} and \cite{MR1858701}.
	\par In this generic case $bc \neq 0$, a straightforward argument shows that the coefficients $\mu,\nu$ in the canonical system (\ref{eq-canonical_sys}) may depend on at most three arbitrary constants \cite[Equation (2.37)]{MR2216951}. Such a situation is very special and reflects the projective geometry of $\Sigma$ as follows.
	\begin{theorem}[Proposition 2.17, \cite{MR2216951}]
		\label{thm-gaussian_curvature}
		Let $\Sigma \subset \mathbb{P}^3$ be a generic surface equipped with asymptotic coordinates $(x,y)$ and canonical system (\ref{eq-canonical_sys}) with $bc \neq 0$, admitting three independent projective deformations, that is, $\mu,\nu$ depend on three arbitrary parameters. Then the Gaussian curvature of the projective metric $\varphi$ is constant.
	\end{theorem}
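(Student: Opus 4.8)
The plan is to convert the statement into a differential identity on the coefficients $b,c$ of the canonical system \eqref{eq-canonical_sys}, and then to extract that identity from the three–parameter hypothesis via an elimination argument. First I would record the curvature formula. Since $(x,y)$ are asymptotic coordinates, the projective metric of \eqref{eq-canonical_invariants} is the null-coframe tensor $\varphi = 8bc\,(dx\otimes dy + dy\otimes dx)$, whose line element is $16\,bc\,dx\,dy$. For a metric of the form $2F\,dx\,dy$ in null coordinates the Gaussian curvature is the standard expression $K = -F^{-1}(\log F)_{xy}$, so with $F = 8bc$,
\[
K = -\frac{1}{8bc}\,(\log bc)_{xy} = -\frac{1}{8bc}\bigl((\log b)_{xy} + (\log c)_{xy}\bigr).
\]
Thus the theorem is equivalent to showing that $(\log bc)_{xy}/(bc)$ is constant, i.e.\ that $\partial_x K = \partial_y K = 0$.

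Next I would set up the deformation count. By Proposition \ref{prop-proj_applicable} a projective deformation fixes $b,c$ and varies $\mu,\nu$ subject to the integrability conditions \eqref{eq-integrability_conditions}. The second and third of those conditions solve algebraically for $\nu_x$ and $\mu_y$ as explicit differential expressions in $b,c$, so any admissible pair has the form $\mu = \mu_0(x,y) + g(x)$ and $\nu = \nu_0(x,y) + f(y)$, with $\mu_0,\nu_0$ a fixed particular solution and $g,f$ arbitrary functions of one variable. Substituting into the first condition of \eqref{eq-integrability_conditions} and discarding the particular part, the deformations are parametrized exactly by the solutions $(g,f)$ of the homogeneous functional equation
\[
c\,g'(x) + 2 c_x\, g(x) \;=\; b\,f'(y) + 2 b_y\, f(y).
\]
The space of projective deformations is the solution space of this equation, and the hypothesis is that this space is three-dimensional.

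The heart of the argument is an elimination that makes the curvature visible. Differentiating the functional equation in $x$ gives a second relation linear in $(f,f')$; together with the original equation these form a $2\times 2$ system with coefficient matrix $\begin{pmatrix} 2b_y & b \\ 2b_{xy} & b_x \end{pmatrix}$, whose determinant is exactly $-2b^2(\log b)_{xy}$. Away from the non-generic locus where $(\log b)_{xy}$ vanishes, Cramer's rule expresses $f$ as a combination of $g,g',g''$ with coefficients built from $b,c$ and their derivatives. Imposing that this $f$ genuinely depend on $y$ alone, $\partial_x f = 0$, yields a third-order linear ODE for $g$ whose coefficients a priori depend on both variables; a three-dimensional space of functions $g(x)$ can exist only if these coefficients are independent of $y$, which is a system of PDEs on $b,c$. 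The symmetric elimination (solving for $g$ and imposing $\partial_y(\,\cdot\,)=0$) produces the companion determinant $-2c^2(\log c)_{xy}$ and the conjugate constraints. This also recovers the bound that $\mu,\nu$ depend on \emph{at most} three constants, since the governing ODE has order three.

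The main obstacle, and the computational core, is to verify that the separation constraints forcing these ODE coefficients to be $y$-independent (resp.\ $x$-independent) reduce precisely to $\partial_y\bigl((\log b)_{xy}/(bc)\bigr)=0$ and $\partial_x\bigl((\log c)_{xy}/(bc)\bigr)=0$, together with their conjugates; by the formula for $K$ above these give $dK=0$. Carrying this out requires expanding the third-order ODE, repeatedly using \eqref{eq-integrability_conditions} to eliminate derivatives of $\mu_0,\nu_0$, and recognizing the surviving expressions as derivatives of $K$. The structural reason the curvature controls the parameter count is exactly that $(\log b)_{xy}$ and $(\log c)_{xy}$ arise as the two elimination determinants; the degenerate loci where a determinant vanishes must be handled separately, using the genericity assumption $bc\neq 0$ (which excludes the ruled degeneration). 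This computation follows Sasaki \cite{MR2216951}, to whom the result is due.
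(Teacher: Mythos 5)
Your proposal follows the same skeleton as the paper's proof (which itself defers the computation to Sasaki): write $K=-\tfrac{1}{8bc}(\log bc)_{xy}$, show the three-parameter hypothesis forces differential identities on $b,c$, and read off $dK=0$. Your intermediate steps are correct and are a genuine elaboration of what the paper leaves implicit: the second and third conditions of (\ref{eq-integrability_conditions}) do determine $\mu_y$ and $\nu_x$ from $b,c$, the deformation space is exactly the solution space of $c\,g'+2c_x g = b\,f'+2b_y f$, the $2\times 2$ elimination determinant is $-2b^2(\log b)_{xy}$, and the third-order ODE argument correctly recovers the upper bound of three parameters. Deferring the ``computational core'' (that the separation constraints reduce to curvature identities) leaves you at the same level of completeness as the paper, which merely records its three identities $(\log(b/c))_{xy}=0$, $\partial_x\bigl((\log c)_{xy}/(bc)\bigr)=0$, $\partial_y\bigl((\log b)_{xy}/(bc)\bigr)=0$ without derivation (note this list differs slightly from your ``conjugates,'' though either set implies $dK=0$).

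There is, however, one concrete error: your claim that the degenerate loci of the elimination are handled by the genericity assumption $bc\neq 0$. Vanishing of $(\log b)_{xy}$ is not the ruled degeneration; it means $b$ separates locally as a product of a function of $x$ and a function of $y$, which is entirely compatible with $bc\neq 0$. Indeed, the paper's own example (\ref{eq-234_example}) has $bc\neq 0$ with both $b$ and $c$ separable, so $(\log b)_{xy}\equiv(\log c)_{xy}\equiv 0$ and \emph{both} of your elimination determinants vanish identically; yet one checks that for that example the functional equation separates (with $u=4x+k_3$, $v=k_1y+k_2$ it becomes $u^2(ug'+8g)=A=4k_1v^2(vf'+2k_1f)$ for a separation constant $A$) and has a three-dimensional solution space. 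So your Cramer step is impossible precisely on the family of surfaces the paper exhibits as satisfying the hypothesis of Theorem \ref{thm-gaussian_curvature}. The repair is easy but must be stated: if $(\log b)_{xy}$ and $(\log c)_{xy}$ both vanish identically, then $K=-\bigl((\log b)_{xy}+(\log c)_{xy}\bigr)/(8bc)\equiv 0$ is constant trivially; if only one vanishes, run the one-sided elimination with the nonvanishing determinant $-2c^2(\log c)_{xy}$ and its constraints. As written, your argument fails exactly on the flat ($K\equiv 0$) three-parameter families, which are the ones most readily constructed.
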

	\begin{proof}
		Let $K = K(\varphi)$ be the Gaussian curvature of the projective metric $\varphi$ of $\Sigma$. This is easily computable as 
		\begin{equation}
			K = \frac{-1}{8bc} \frac{\partial^2}{\partial x \partial y} \log(bc) \, .
		\end{equation}
		In this case it follows by utilizing the integrability conditions in Lemma \ref{lemma-integrability} that the three arbitrary parameters in $\mu, \nu$ lead to the following quantities being identically zero:
		\begin{equation*}
			\frac{\partial^2}{\partial x \partial y} \log\left(\frac{b}{c}\right) = 0 \, , \quad \frac{\partial}{\partial x} \left( \frac{1}{bc} \frac{\partial^2}{\partial x \partial y} \log(c) \right) = 0 \,, \quad \frac{\partial}{\partial y} \left( \frac{1}{bc} \frac{\partial^2}{\partial x \partial y} \log(b) \right) = 0\, .
		\end{equation*}
		These three equations together imply that $K \equiv \mathrm{const}$.
	\end{proof}
	\section{Derived reduction of the canonical system}
	\label{s-derived_reduction}
	Using the language of exterior differential systems (again see \cite{BCG3} and \cite{Ivey_Landsberg2003}, or \cite{Stormark_2000}), we may locally encode every system of PDE as an exterior differential system (EDS), or dually as a vector field distribution, as in \S \ref{s-rank_3}. In this section, we study the canonical system in Equation (\ref{eq-canonical_sys}) from the EDS perspective. While this system is naturally associated to a flat connection on a rank-4 bundle $\pi : E \to \Sigma$ as in \S \ref{ss-rank-4_systems}, we show that there is a canonical way to reduce this data to the rank-3 sub-bundle $B := E/L^*$, with an induced connection $\bar{\nabla}$ that both completely determines and is completely determined by the projective geometry of $\Sigma \subset \mathbb{P}^3$.
	\subsection{A rank-3 EDS determined by a projective surface}
	\label{ss-rank-3_pfaffian}
	\par Consider the projective surface $\Sigma \subset \mathbb{P}^3$, equipped with asymptotic local coordinates $(x,y)$. Let $J^2=J^2(\Sigma, \mathbb{K})$ denote the space of (local) 2-jets from $\Sigma$ to $\mathbb{K}$. This is a geometric structure that encodes equivalence classes of local sections $z : \Sigma \to L$ up to second order derivatives, from these considerations we assume $L \cong \mathcal{O}_{\Sigma}$. The bundle $J^2$ forms a smooth 8-dimensional manifold that fibres over $\Sigma$, equipped with local coordinates $(x,y,z,z_{x}, z_{y}, z_{xx}, z_{xy}, z_{yy})$ and a canonical projection $\pi_2 : J^2 \to \Sigma$. There is an analogous 5-dimensional first order jet space $J^1 \subset J^2$, equipped with a compatible projection $\pi_1 : J^1 \to \Sigma$. 
	\par On $J^2$, we may define the contact one-forms
	\begin{equation}
		\begin{cases}
			\theta_z := dz - z_{x} \, dx - z_{y} \, dy \, , \\
			\theta_x := dz_{x} - \, z_{xx} \, dx - z_{xy} \, dy \, , \\
			\theta_y := dz_{y} - \, z_{xy} \, dx - z_{yy} \, dy \, .
		\end{cases}
	\end{equation}
	These one-forms generate a differential ideal, $\mathrm{Ct} = \langle \theta_z, \, \theta_x, \, \theta_y \rangle_\mathrm{diff}$, called the \emph{canonical contact system} on $J^2$. Since the rank-4 linear system (\ref{eq-rank_four_system}), and hence the canonical system (\ref{eq-canonical_sys}), are of finite type (all third order derivatives of $z$ can be expressed in terms of $J^2$ coordinates), it is sufficient to only consider second order jet space $J^2$ and the ideal ~$\mathrm{Ct}$.
	We thus obtain from the canonical system an embedded submanifold $M^6 = \{(x,y,z,p, q, s) \; \vert \; (z,p,q,s) \neq (0,0,0,0)\} \subset J^2$ from the locus of the linear equations
	\begin{equation}
		\label{eq-M6_eqs}
		\begin{aligned}
			r &= bp + \mu z \, , \\
			t &= cq + \nu z \, , 
		\end{aligned}
	\end{equation}
	where we have set $p = z_x$, $q = z_y$, $r = z_{xx}$, $t=z_{yy}$, and $s = z_{xy}$. From this perspective it is clear that $M^6 = \mathsf{Z}_E^c \subset \mathrm{Tot}(E)$ is the complement of the zero section $\mathsf{Z}_E$ in the total space of the rank-4 vector bundle $E$ in (\ref{eq-splitting}), using notation analogous to Proposition \ref{prop-total_space_EDS}. Let $\iota: M^6 \to J^2$ be the associated inclusion map. Then the restriction $\Omega := \iota^\ast (\mathrm{Ct})$ defines a rank-3 EDS (or Pfaffian system) on $M^6$; explicitly, $\Omega = \langle \omega_z, \, \omega_x, \, \omega_y \rangle_\mathrm{diff}$ where $\omega_\alpha := \iota^*\theta_{\alpha}$, $\alpha=z,x,y$, whence
	\begin{equation}
		\label{eq-M6_rank3_EDS}
		\begin{cases}
			\omega_z = dz - p  dx - q  dy, \\
			\omega_x = dp - ( b q + \mu z )  dx - s  dy, \\ 
			\omega_y = dq -  s  dx - ( c p + \nu z )  dy.
		\end{cases}
	\end{equation}
	\par Moreover, the differential ideal $\Omega$ is nothing but the ideal generated by the first three rows of the connection $\nabla$ pulled back to $M^6$ under the bundle projection map $\pi : E \to \Sigma$. This is completely sufficient for our purposes, since the final row of the connection matrix is of the form
	\begin{equation}
		\label{eq-total_derivative}
		\omega_s = ds - D_y (r) dx - D_x (t) dy \, ,
	\end{equation}
	where $D_x$ and $D_y$ are the well known total derivative operators, and $r,t$ are given in (\ref{eq-M6_eqs}). This is in essence a middle ground between prolongation of the canonical system (\ref{eq-canonical_sys}) and the fact that the linear system is of finite type and rank-4. 
	\par If instead one considers the rank-4 Pfaffian system $\tilde{\Omega} := \langle \pi^*\omega\vert_{M^6} \rangle_{\mathrm{diff}}$ on $M^6$ analogously generated as in \S \ref{ss-horizontal}, then it follows from those results that if $\tilde{\Delta} = \tilde{\Omega}^\perp$, then $\tilde{\Delta}$ is an integrable 2-plane distribution on $M^6$, as in \S \ref{s-rank_3}. This is a fundamental feature of EDS perspective for solutions of PDEs \cite{Stormark_2000}, allowing the study the rank-4 system \eqref{eq-canonical_sys} from yet another differential geometric perspective, in the sense that each solution of the system \eqref{eq-canonical_sys} defines a 2-dimensional integral manifold $\psi: \Sigma \to M^6$, which satisfies by definition $\psi^*\tilde{\Omega}=0$, and moreover  that $\psi^\ast(dx \wedge dy) \neq 0$; conversely, every 2-dimensional integral submanifold $\psi(\Sigma) \subset M^6$ for the EDS $\tilde{\Omega}$ which $\psi^\ast (dx \wedge dy) \neq 0$ defines a solution to \eqref{eq-canonical_sys}. This is in perfect agreement with the discussion above and in \S \ref{ss-horizontal}, since the curvature $\mathbf{R}=0$ of the connection $\nabla$ vanishes canonically. Yet this perspective reveals nothing about the local geometry of $\Sigma$ discussed in \S \ref{ss-differential_invariants}. In the sequel, we show that the rank-3 EDS $\Omega$ can be used to detect more subtle geometric aspects of $\Sigma$, and can be reduced to yield a certain horizontal 2-plane distribution $\bar{\Delta}$ on a natural ~$M^5 \subset M^6$. 
	\par The Pfaffian system $\Omega$ is the annihilator of a rank-3 vector field distribution $\Delta$ on $M^6$ given by
	\begin{equation}
		\label{eq:canonical_Delta}
		\Delta = \mathrm{span}\begin{cases}
			X = \partial_x + p \, \partial_z + (b q + \mu z) \, \partial_{p} + s \, \partial_{q} , \\
			Y = \partial_y + q \, \partial_z + s \, \partial_{p} + (c p + \nu z) \, \partial_{q}, \\ 	
			Z = \partial_{s}. \\
		\end{cases}
	\end{equation}
	The vector field $Z$, tangent to the fibre direction $L^*$ on $M^6=\mathrm{Tot}(E)$, plays special importance. It is easy to check that the distribution spanned by \eqref{eq:canonical_Delta} generically has growth vector $(3,5,6)$. In exactly the same manner discussed in Proposition \ref{prop-total_space_EDS} and Corollary \ref{cor-growth_vector}, the parabolic gauge group $\mathrm{P} \subset \mathrm{PSL}(4)$ of the connection $\nabla$ can be checked to preserve the growth vector. In fact, since $\mathrm{P}$ is parabolic and preserves the splitting of $E = L \oplus T\Sigma \oplus L^*$ in (\ref{eq-splitting}), for $g \in \mathrm{P}$ the action $g \cdot Z$ preserves the tangency to $L^*$.
	We have the following result summarizing this section.
	\begin{lemma}
		\label{lemma-rank3_M6}
		The canonical system (\ref{eq-canonical_sys}) of an immersed surface $\psi : \Sigma \to ~\mathbb{P}^3$ is  determined by the rank-3 Pfaffian system $\Omega$ in (\ref{eq-M6_rank3_EDS}) on the 6-manifold $M^6$. This EDS is the annihilator of a smooth 3-plane distribution $\Delta \subset TM^6$ which is always bracket generating and has growth vector $(3,5,6)$. The parabolic gauge group $\mathrm{P}$ of the connection $\nabla$ on $E$ acts naturally on $\Omega$ and $\Delta$ and preserves the growth vector; moreover, for all $g \in \mathrm{P}$, the vector $g \cdot Z$ is always tangent to $L^*$.
	\end{lemma}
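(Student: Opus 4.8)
The plan is to dispatch the three assertions of Lemma \ref{lemma-rank3_M6} separately: the mutual determination of the canonical system and $\Omega$, the computation of the growth vector, and the behaviour under the parabolic gauge group. The determination is close to definitional in one direction and requires reading off coefficients in the other. By construction the generators (\ref{eq-M6_rank3_EDS}) are assembled directly from $b,c,\mu,\nu$, so (\ref{eq-canonical_sys}) determines $\Omega$. Conversely, $\Omega$ is presented by the canonical semibasic pullbacks of the first three rows of $\nabla$, normalized so that the $dz,dp,dq$-terms of $\omega_z,\omega_x,\omega_y$ are $dz,dp,dq$; once this normalization is fixed the remaining $dx,dy$-coefficients are exactly $b,c,\mu,\nu$, which recovers (\ref{eq-canonical_sys}). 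I would invoke Proposition \ref{prop-derived_connection_forms} to underscore that $\Omega$ loses no information, since its canonical refinement to $\tilde\Omega$ reconstitutes the full flat connection.

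The computational heart is the growth vector, which I would obtain by direct Lie bracket computation from (\ref{eq:canonical_Delta}). Since $Z=\partial_s$ and $s$ appears with unit coefficient in the $\partial_q$-component of $X$ and the $\partial_p$-component of $Y$, one finds at once
\[
[X,Z] = -\partial_q, \qquad [Y,Z] = -\partial_p.
\]
Adjoining these to $\Delta$ yields $\Delta' = \mathrm{span}\{X,Y,Z,\partial_p,\partial_q\}$, of rank $5$; one checks that $[X,Y]$ contributes nothing new, its only nonzero components lying along $\partial_p,\partial_q$. The single missing coordinate direction is then supplied by
\[
[X,\partial_p] = -\partial_z,
\]
so $\Delta^{(2)} = TM^6$ has rank $6$. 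The decisive point is that all three brackets are \emph{independent} of $b,c,\mu,\nu$; hence the growth vector is universally $(3,5,6)$ with no exceptional locus, so $\Delta$ is always two-step bracket generating. This is the precise content of ``always'' in the statement.

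For the gauge-theoretic claims I would argue structurally. The growth vector is a diffeomorphism invariant, and each $g\in\mathrm{P}$ acts on $M^6=\mathrm{Tot}(E)$ as a fibrewise-linear bundle automorphism carrying $\Delta$ to the distribution of the gauge-transformed connection; by the universality just established the latter again has growth vector $(3,5,6)$, so invariance follows exactly as in Proposition \ref{prop-total_space_EDS} and Corollary \ref{cor-growth_vector}. For the tangency of $g\cdot Z$, the essential feature is that the parabolic $\mathrm{P}$ of (\ref{eq-frame_automorphism}) is block-lower-triangular and stabilizes the flag $L\subset L\oplus T\Sigma$ underlying the splitting (\ref{eq-splitting}). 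The active gauge action on fibre coordinates is fibrewise multiplication by this matrix $g$, whose column indexed by the coordinate $s$ (dual to $e_3$) is $(0,0,0,\upsilon)^{T}$ by lower-triangularity; consequently $g\cdot Z = \upsilon\,\partial_s$ is a nonzero multiple of $Z$ and remains tangent to the $L^*$-summand.

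The main obstacle I anticipate is this last point: pinning down, in a convention-consistent way, why $g\cdot Z$ stays tangent to $L^*$. Because $\mathrm{P}$ does not preserve the sub-line $\langle e_3\rangle$ itself but only the flag --- acting on $E/(L\oplus T\Sigma)\cong L^*$ as a quotient --- one must be careful to use the active representation matrix $g$ (rather than the passive $(g^{-1})^{T}$, which is upper-triangular and would mix the $L^*$-direction into the lower filtered pieces) and to confirm that its $s$-column is $(0,0,0,\ast)^{T}$, which is exactly what block-triangularity guarantees. The bracket computations, by contrast, are routine and, being coefficient-free, leave no room for a degenerate sub-case.
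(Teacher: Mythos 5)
Your proof is correct, and on the central point it takes a genuinely different---and cleaner---route than the paper. The paper's own proof declares the growth vector to be the only thing needing verification and establishes it by a case analysis: the value $(3,5,6)$ is taken as the generic behaviour, and the potentially degenerate strata of \S\ref{ss-geometry_proj_surf} (quadrics, ruled surfaces, projectively applicable surfaces) are then checked one by one, implementing the integrability conditions (\ref{eq-integrability_conditions}) in the derived flag. Your bracket computation eliminates the case analysis altogether: since $[X,Z]=-\partial_q$, $[Y,Z]=-\partial_p$, $[X,Y]\in\mathrm{span}\{\partial_p,\partial_q\}$, and $[X,\partial_p]=-\partial_z$ (all of which check out), the derived flag $\Delta'=\mathrm{span}\{X,Y,Z,\partial_p,\partial_q\}$, $\Delta^{(2)}=TM^6$ is literally independent of $b,c,\mu,\nu$, so there is no exceptional locus and the integrability conditions never enter. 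This is in fact a stronger statement than the one the paper proves: the $(3,5,6)$ behaviour is a feature of the EDS of \emph{any} system of the shape (\ref{eq-canonical_sys}), whether or not it defines a projective surface. What the paper's route buys instead is contact with the stratification that matters afterwards: the same case-by-case apparatus is what later detects the sensitivity of the reduced distributions to the projective invariants (Lemma \ref{lemma-cauchy_reduction}, Theorem \ref{thm-proj_surface_growth_vector}), and running it here sets up that contrast, whereas your computation explains \emph{why} no such sensitivity can appear at the level of $\Delta$ on $M^6$.

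Two minor remarks. First, your appeal to ``universality'' in the gauge step is slightly misplaced: a general $g\in\mathrm{P}$ does not carry the canonical form (\ref{eq-connection_canonical_sys}) to another canonical form, so $g_*\Delta$ need not again be of the shape (\ref{eq:canonical_Delta}) with new coefficients; but this is harmless, since the diffeomorphism-invariance of the growth vector, which you also invoke, already suffices and is exactly the paper's argument via Proposition \ref{prop-total_space_EDS} and Corollary \ref{cor-growth_vector}. Second, your care with the covariant (active) convention is warranted and correct: the jet coordinates $(z,p,q,s)$ transform by the matrix $g$ itself, so the block-lower-triangularity of (\ref{eq-frame_automorphism}) fixes the $s$-axis and gives $g\cdot Z=\upsilon\,\partial_s$; the paper asserts this tangency in a single line, and your argument supplies the missing justification.
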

	\begin{proof}
		The only thing necessary to check is the statement about the growth vector, which in principle could change with either the vanishing of the projective metric $\varphi$ or the cubic form $\Phi$ from Proposition \ref{prop-canonical_invariants}, or by incorporating the integrability conditions (\ref{eq-integrability_conditions}). However, implementing the integrability conditions in the derived flag, and checking each case in \S \ref{ss-geometry_proj_surf}, including when the surface is projectively applicable, the growth vector is always checked to be $(3,5,6)$. 
	\end{proof}
	Thus we may regard Lemma \ref{lemma-rank3_M6} as the capturing the geometry of the pair $(\Sigma,\psi)$ in the language of exterior differential systems. That the growth vector is always $(3,5,6)$ is a manifestation of Lemma \ref{lemma-integrability}, which says that the integrability conditions in Equation (\ref{eq-integrability_conditions}) are the only obstructions to the linear system (\ref{eq-canonical_sys}) defining a projective surface. We then have the following statement.
	\begin{proposition}
		\label{prop-rank3_M6-proj_applicable}
		Let $\Sigma \subset \mathbb{P}^3$ with associated rank-3 Pfaffian system $\Omega$ on $M^6$. Then we may consider the equivalence class of 3-plane distributions on $M^6$ that correspond to all gauge equivalent representations of $\Delta$ parabolic group $\mathrm{P} \subset \mathrm{PSL}(4)$. If $\Sigma$ is very general, then the distribution $\Delta$ is completely determined by the canonical system (\ref{eq-canonical_sys}). If $\Sigma$ is projectively applicable as in \ref{ss-geometry_proj_surf}, then the associated distribution is determined by the canonical system, parameterized by the relevant projective deformations of $\Sigma$.
	\end{proposition}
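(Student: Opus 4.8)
The plan is to exploit the fact that the distribution $\Delta$ in (\ref{eq:canonical_Delta}) is built \emph{explicitly} from the coefficients $b,c,\mu,\nu$ of the canonical system (\ref{eq-canonical_sys}): the generating vector fields $X,Y,Z$ are polynomial in these coefficients and the fibre coordinates. Consequently the assignment taking a canonical system to its associated distribution $\Delta\subset TM^6$ is manifestly well-defined, and the substance of the proposition is not this assignment itself but the identification of precisely which projective-geometric data of $\Sigma$ pins down the canonical system. The two bullet points of the statement therefore reduce to a dichotomy in the rigidity of $(\mu,\nu)$, which is governed by the results of \S\ref{ss-geometry_proj_surf}. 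I would organize the proof around this dichotomy, treating the two cases in turn and then reconciling them with the $\mathrm{P}$-action.

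For the very general case, I would invoke Proposition \ref{prop-projective_invariants}: such a surface is determined up to projective automorphism by $\{\Phi,h,\ell,r\}$. By Proposition \ref{prop-canonical_invariants}, the pair $(\varphi,\Phi)$ recovers $b,c$, while $\ell,r$ are differential polynomials in $b,c,\mu,\nu$. Since a very general surface admits no nontrivial projective deformation by definition (cf.\ \S\ref{sss-proj_applicable}), the coefficients $\mu,\nu$ are rigid, so the entire canonical system is fixed, and (\ref{eq:canonical_Delta}) then shows $\Delta$ is completely determined. To pass to the equivalence class under $\mathrm{P}$, I would note that by Proposition \ref{prop-proj_gauge_transformation} the Wilczynski covariant transformations lie in $\mathrm{Isom}(h)\subset\mathrm{P}$, and by Lemma \ref{lemma-rank3_M6} the $\mathrm{P}$-action preserves the growth vector and the tangency of $Z$ to $L^*$; hence the gauge-equivalence class of $\Delta$ corresponds precisely to the projective equivalence class of $\Sigma$.

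For the projectively applicable case, I would appeal to Proposition \ref{prop-proj_applicable}: a projective deformation $\mathscr{F}:\Sigma\to\Sigma^\prime$ carries asymptotic directions to asymptotic directions and satisfies $b=b^\prime$, $c=c^\prime$, hence fixes $\varphi$ and $\Phi$, but is \emph{not} forced to fix $\mu,\nu$, precisely because the integrability conditions (\ref{eq-integrability_conditions}) of Lemma \ref{lemma-integrability} do not determine $\mu,\nu$ from $b,c$ alone. The admissible deformations therefore correspond to the family of solutions $(\mu,\nu)$ of (\ref{eq-integrability_conditions}) with $b,c$ held fixed, which in the generic regime $bc\neq 0$ is governed by at most three parameters, as in the discussion preceding Theorem \ref{thm-gaussian_curvature}. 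Each admissible $(\mu,\nu)$ yields a distinct canonical system and, through (\ref{eq:canonical_Delta}), a distribution; thus $\Delta$ remains determined by the canonical system, while the family of such distributions is parameterized by the projective deformations of $\Sigma$.

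The main obstacle is conceptual rather than computational: one must be careful that the deformation freedom in $(\mu,\nu)$ survives as genuine freedom at the level of distributions and is not absorbed by the $\mathrm{P}$-action. This is where the identification of gauge equivalence with projective equivalence is essential—two surfaces related by a nontrivial projective deformation share $\varphi,\Phi$ yet are \emph{not} projectively equivalent, so their distributions, built from different $(\mu,\nu)$, cannot be related by any $g\in\mathrm{P}$. Making this precise requires tracking how $\mathrm{P}$, which preserves the splitting $E=L\oplus T\Sigma\oplus L^*$ of (\ref{eq-splitting}), acts on the lower entries of the connection (\ref{eq-connection_canonical_sys}); since that action is constrained by the parabolic structure, the deformation parameters appearing in $\mu,\nu$ are not gauge artifacts, and distinct deformations produce genuinely inequivalent distributions.
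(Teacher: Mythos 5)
Your proposal is correct and takes essentially the same route as the paper: the paper states this proposition with no separate proof, treating it as a summary of the preceding discussion, and your argument assembles exactly those ingredients --- the explicit dependence of $\Delta$ in (\ref{eq:canonical_Delta}) on $b,c,\mu,\nu$, the rigidity of $(\mu,\nu)$ for very general surfaces versus the deformation freedom guaranteed by Proposition \ref{prop-proj_applicable} and Lemma \ref{lemma-integrability} for projectively applicable ones, and the compatibility with the $\mathrm{P}$-action from Proposition \ref{prop-proj_gauge_transformation} and Lemma \ref{lemma-rank3_M6}. Your closing concern that the deformation parameters in $(\mu,\nu)$ are not absorbed as gauge artifacts is the same point the paper makes implicitly when it asserts the distributions are \emph{parameterized} by the projective deformations of $\Sigma$.
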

	We will see the significance of the vector field $Z$ in the following.
	\subsection{Derived Cauchy reduction}
	\label{ss-cauchy}
	\par A vector field $V \in \underline{\Delta}$ is said to be a \emph{Cauchy characteristic} if $[V, \Delta] \subseteq \Delta$, that is, the Lie derivative of any vector field in $\Delta$ with respect to $V$ lies in $\Delta$. The utility of Cauchy characteristics are as follows. A distribution admitting Cauchy characteristics can always be reduced to a lower rank distribution on a  lower-dimensional space, the latter given by the level-sets of the associated invariant functions \cite[\S 3.4]{Stormark_2000}. A direct computation shows the following 
	\begin{lemma}
		\label{lemma-cauchy_characteristic}
		The 3-plane distribution $\Delta$ admits no Cauchy characteristics. The vector field $Z$ is the only Cauchy characteristic of the derived distribution $\Delta'$. The same is true for all gauge equivalent 3-plane distributions under the $\mathrm{P}$-action on $\Delta$.
	\end{lemma}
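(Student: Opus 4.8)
The plan is to verify both assertions by a direct computation of the structure equations of $\Delta$ in the frame $\{X,Y,Z\}$ of Equation~\eqref{eq:canonical_Delta}, and then to dispatch the gauge-equivalent case by functoriality of the Cauchy characteristic construction. First I would record the three essential Lie brackets. Since $b,c,\mu,\nu$ depend only on $(x,y)$, a short computation gives
\begin{equation*}
	[X,Z] = -\partial_q \, , \qquad [Y,Z] = -\partial_p \, , \qquad [X,Y] = -P\,\partial_p + Q\,\partial_q \, ,
\end{equation*}
where $P = (b_y+\mu)q + bc\,p + (\mu_y + b\nu)z$ and $Q$ is the analogous fibre-linear expression, whose precise form is immaterial here. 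The decisive structural feature is that $[X,Z]$ and $[Y,Z]$ are the \emph{coordinate} fields $-\partial_q$ and $-\partial_p$, which together with $X,Y,Z$ are everywhere linearly independent; this is exactly the statement that $\Delta^\prime = \mathrm{span}\{X,Y,\partial_p,\partial_q,\partial_s\}$ has rank $5$, consistent with the growth vector $(3,5,6)$ of Lemma~\ref{lemma-rank3_M6}.

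For the first claim I would take an arbitrary $V = fX + gY + hZ \in \underline{\Delta}$ and impose the Cauchy condition $[V,\Delta]\subseteq\Delta$, beginning with $[V,Z]\in\underline{\Delta}$. Expanding by Leibniz, the terms $-Z(f)X - Z(g)Y - Z(h)Z$ lie in $\Delta$ automatically, so the condition collapses to $f\,\partial_q + g\,\partial_p \in \underline{\Delta}$; the transversality of $\partial_p,\partial_q$ to $\Delta$ noted above forces $f=g=0$. Substituting $V = hZ$ into $[V,X]\in\underline{\Delta}$ gives $h\,\partial_q \in\underline{\Delta}$ modulo $\Delta$, whence $h=0$ and $V=0$. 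Thus $\Delta$ admits no nonzero Cauchy characteristic.

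For the second claim I would pass to $\Delta^\prime$, whose annihilator is the rank-one Pfaffian system $\Omega^\prime = \langle \omega_z\rangle$. That $Z$ is a Cauchy characteristic is immediate, since $Z=\partial_s$ commutes with the generators $\partial_x + p\,\partial_z,\ \partial_y + q\,\partial_z,\ \partial_p,\ \partial_q,\ \partial_s$ of $\Delta^\prime$. For uniqueness I would use the dual characterization (see \cite[\S 3.4]{Stormark_2000}): a Cauchy characteristic is a $V$ with $\omega_z(V)=0$ and $V\,\lrcorner\,d\omega_z \equiv 0 \bmod \langle\omega_z\rangle$. As $d\omega_z = -dp\wedge dx - dq\wedge dy$ carries no $dz$-component while $\omega_z$ does, the congruence forces $V\,\lrcorner\,d\omega_z = 0$ exactly, killing the $\partial_x,\partial_y,\partial_p,\partial_q$ components of $V$; the constraint $\omega_z(V)=0$ then removes the $\partial_z$ component, leaving $V$ a multiple of $\partial_s=Z$.

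Finally, for the gauge-equivalent distributions I would argue by naturality: the parabolic group $\mathrm{P}$ acts on $M^6 = \mathrm{Tot}(E)$ by diffeomorphisms, and the Cauchy characteristic space is a diffeomorphism invariant, so $g\cdot\Delta$ has no Cauchy characteristics while $(g\cdot\Delta)^\prime = g\cdot\Delta^\prime$ has one-dimensional Cauchy characteristic space spanned by $g\cdot Z$, which by Lemma~\ref{lemma-rank3_M6} remains tangent to $L^*$. The one point requiring care is that the argument be uniform across the strata of \S\ref{ss-geometry_proj_surf} (quadrics, ruled, projectively applicable), where some of $b,c,\mu,\nu$ degenerate; this is the main obstacle, but it dissolves once one observes that $[X,Z]=-\partial_q$ and $[Y,Z]=-\partial_p$ are independent of all four coefficient functions, so their transversality to $\Delta$—equivalently the rank-$5$ content of the growth vector $(3,5,6)$—holds identically, making the whole computation stratum-independent.
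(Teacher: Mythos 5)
Your proposal is correct and takes essentially the same route as the paper, which states this lemma as following from ``a direct computation'': your brackets $[X,Z]=-\partial_q$, $[Y,Z]=-\partial_p$, $[X,Y]\in\mathrm{span}\{\partial_p,\partial_q\}$ are exactly right, and they make both the nonexistence of Cauchy characteristics for $\Delta$ and the identification of the Cauchy space of $\Delta'$ (via $\mathrm{ann}(\Delta')=\langle\omega_z\rangle$) immediate, uniformly in $b,c,\mu,\nu$. Your handling of the gauge statement by diffeomorphism-naturality of Lie brackets under the fibrewise $\mathrm{P}$-action, combined with the tangency of $g\cdot Z$ to $L^*$ from Lemma \ref{lemma-rank3_M6}, is a clean way to organize what the paper leaves implicit.
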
  
	\par Although the distribution \eqref{eq:canonical_Delta} admits no Cauchy characteristics,  and therefore $\Delta$ cannot be reduced in this way, we may still utilize the Cauchy characteristic of $\Delta^\prime$ to reduce the 3-plane distribution $\Delta$ as follows.
	\par Let $M_{s_0}^5 \subset M^6$ be the manifold defined by the level sets $s = s_0$ of the Cauchy characteristic vector field $Z = \partial_{s}$ of the first derived $\Delta^\prime$, parameterized by constant local sections $s=s_0$ of the (trivialized) normal bundle $\underline{L}^* \cong \mathcal{O}_{\Sigma}$. For each such $s_0$, we may define the vector fields
	\begin{equation}
		\label{eq-level_set_vector_fields}
		\begin{aligned}
			X_{s_0} &= \partial_x + p  \partial_z + (b q + \mu z) \partial_{p} + s_0  \partial_{q} , \\
			Y_{s_0} &= \partial_y + q  \partial_z + s_0  \partial_{p} + (c p + \nu z)  \partial_{q}, \\
		\end{aligned}
	\end{equation}
	on $M^6$. The derived Cauchy characteristic $Z$ is transverse to the submanifold $M^5_{s_0}$. These vector fields generate a one-parameter family of 2-plane distributions $\Delta_{s_0} = \mathrm{span}\{ X_{s_0}, \, Y_{s_0}\}$ which are invariant with respect to $Z$, and hence are well-defined on $M^5_{s_0}$ for all $s_0$. That is, for each fixed value of $s_0$, the 3-plane distribution $\hat{\Delta}_{s_0} := \mathrm{span} \, \{X_{s_0},Y_{s_0},Z\} \subset TM^6$ admits $Z$ as a Cauchy characteristic, so we may define the Cauchy reduced rank-2 distribution $\bar\Delta_{s_0} := \bar{\pi}_\ast(\Delta_{s_0})$ on $M^5_{s_0}$ where $\bar{\pi}: M^6 \to M^5_{s_0}$ is the standard projection map to the level set. We may write $\bar{\Delta}_{s_0} = \mathrm{span} \, \{\bar{X}_{s_0},\bar{Y}_{s_0}\}$, where the additional decoration indicates that the vectors live on $M^5_{s_0}$, but are defined by the relevant quantities in (\ref{eq-level_set_vector_fields}). We check that this non-traditional Cauchy reduction is well-defined with the following 
	\begin{lemma}
		\label{lemma-cauchy_reduction}
		For all constant local sections $s_0 \in \underline{L}^*$, the distribution $\hat{\Delta}_{s_0}$ defines a 3-plane distribution on $M^6$, with growth vector generically $(3,4,6)$. The vector field $Z \in \underline{\hat{\Delta}}_{s_0}$ is a Cauchy characteristic. The Cauchy reduced distribution $\bar{\Delta}_{s_0}$ defines a generic bracket generating 2-plane distribution on the 5-dimensional submanifold $M^5_{s_0} \subset M^6$, so that the growth vector of $\bar{\Delta}_{s_0}$ is $(2,3,5)$ without imposing additional constraints on the geometry of $\Sigma$. The growth vector restrictions on $\bar{\Delta}_{s_0}$ in the sufficient directions of the statements in Theorem \ref{thm-general_growth_vector} are independent of the choice of $s_0$. 
	\end{lemma}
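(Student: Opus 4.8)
The plan is to establish all four assertions by direct computation of the derived flags, organized around the decomposition $\bar{X}_{s_0} = \bar{X}_0 + s_0\partial_q$ and $\bar{Y}_{s_0} = \bar{Y}_0 + s_0\partial_p$, which isolates the dependence on the parameter $s_0$. First I would treat $\hat{\Delta}_{s_0} = \mathrm{span}\{X_{s_0}, Y_{s_0}, Z\}$. Linear independence, hence rank $3$, is immediate from the $\partial_x, \partial_y, \partial_s$ leading terms. Since $X_{s_0}, Y_{s_0}$ have coefficients independent of the fibre coordinate $s$, the brackets $[X_{s_0}, Z] = [Y_{s_0}, Z] = 0$ vanish identically; this simultaneously shows $Z$ is a Cauchy characteristic and that the only new vector in the first derived is $W := [X_{s_0}, Y_{s_0}]$. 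A short computation shows $W$ is vertical, valued in $\partial_p, \partial_q$, and -- crucially -- independent of $s_0$, because the $\partial_z$-contributions $s_0[\bar{X}_0, \partial_p] + s_0[\partial_q, \bar{Y}_0]$ cancel. As $W$ is generically independent of $X_{s_0}, Y_{s_0}, Z$, the first derived has rank $4$. Iterating once more with $[X_{s_0}, W]$ and $[Y_{s_0}, W]$ (brackets with $Z$ again vanishing) and checking these supply the two remaining vertical directions for generic connection data yields growth $(3,4,6)$.

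Next, since $Z$ is a Cauchy characteristic of $\hat{\Delta}_{s_0}$ and is transverse to the level set $M^5_{s_0}$, the standard reduction of a distribution by its Cauchy characteristics \cite{Stormark_2000} produces a well-defined $2$-plane distribution $\bar{\Delta}_{s_0}$ on $M^5_{s_0}$. On $M^5_{s_0}$ the bracket $\bar{W} = [\bar{X}_{s_0}, \bar{Y}_{s_0}]$ again gives rank $3$, while the second-level brackets $[\bar{X}_{s_0}, \bar{W}]$ and $[\bar{Y}_{s_0}, \bar{W}]$ acquire $\partial_z$-components $-W^p$ and $-W^q$, which supply the direction transverse to the $\partial_p, \partial_q$ plane; generically the full rank $5$ is reached, giving growth $(2,3,5)$. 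To see that no hypothesis beyond the integrability conditions of Lemma \ref{lemma-integrability} is required, I would record the relevant rank-$5$ nondegeneracy as a single determinant in $b, c, \mu, \nu$ and their derivatives, and verify that it is not forced to vanish on the integrability variety (equivalently, check a representative very general surface), in parallel with the case analysis in the proof of Lemma \ref{lemma-rank3_M6}.

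The main obstacle is the final assertion, the $s_0$-independence of the growth restrictions. Two structural facts do most of the work. First, $\bar{W}$ is literally $s_0$-independent, so the condition $\bar{\mathbf{R}} = 0$ in case (i) of Theorem \ref{thm-general_growth_vector} is $s_0$-independent. Second, in this reduced setting the $e_0$-column of $\bar{\mathbf{R}}$ vanishes, while the horizontal lift forces $\Gamma^1_{1,0} = \Gamma^2_{2,0} = 1$, so the covariant derivative satisfies the identity $R^\alpha_{0;i} = -R^\alpha_i$; consequently $\nabla\bar{\mathbf{R}} = 0$ forces $\bar{\mathbf{R}} = 0$, and case (ii) is vacuous. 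The delicate point is case (iii): the second-level brackets genuinely depend on $s_0$, via $[\bar{X}_{s_0}, \bar{W}] = [\bar{X}_0, \bar{W}] + s_0\,\partial_{e_2}\bar{W}$ and $[\bar{Y}_{s_0}, \bar{W}] = [\bar{Y}_0, \bar{W}] + s_0\,\partial_{e_1}\bar{W}$, whose correction terms $\partial_{e_2}\bar{W} = (R^2_1, R^2_2)$ and $\partial_{e_1}\bar{W} = (R^1_1, R^1_2)$ are themselves rows of the curvature.

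To close case (iii) I would emphasize that the conditions of Theorem \ref{thm-general_growth_vector} are tensorial statements about $\bar{\mathbf{R}}$ and $\nabla\bar{\mathbf{R}}$, data intrinsic to the connection $\bar{\nabla}$ on $B$ obtained at $s_0 = 0$; thus the existence and identity of a null direction $X$ with $\nabla_X\bar{\mathbf{R}} = 0$ makes no reference to $s_0$. Concretely, using the tracelessness $R^1_1 + R^2_2 = 0$ one finds that the $s_0$-correction to the nondegeneracy determinant is the curvature-valued quadratic $(W^p)^2 R^1_2 + (W^q)^2 R^2_1$, and I would verify case by case (quadric, ruled, projectively applicable, very general), exactly as in Lemma \ref{lemma-rank3_M6} and using the integrability conditions, that this correction does not alter which of (i)--(iii) governs $\bar{\Delta}_{s_0}$. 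Pinning down precisely which contractions of $\bar{\mathbf{R}}$ control the correction, and checking that they are subordinate to the tensorial conditions defining each case, is where the genuine work lies; everything else reduces to the bracket computations above.
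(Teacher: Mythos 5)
Your proposal is correct and follows essentially the same route as the paper's own proof: rank and Cauchy-characteristic claims via the vanishing brackets $[Z,X_{s_0}]=[Z,Y_{s_0}]=0$, the generic growth vectors $(3,4,6)$ and $(2,3,5)$ by direct computation of the derived flags, reduction via Stormark, and the $s_0$-independence of the growth restrictions via the linear dependence of $\bar{X}_{s_0},\bar{Y}_{s_0}$ on $s_0$. In fact you supply more detail than the paper, which only asserts a ``routine calculation'' and that each restriction ``can be checked to be independent of $s_0$''; your explicit correction term $(W^p)^2R^1_2+(W^q)^2R^2_1$ and the identity $R^\alpha_{0;i}=-R^\alpha_i$ (rendering case (\textit{ii}) vacuous) are sound and anticipate what the paper only establishes later, by direct computation, in Theorem \ref{thm-proj_surface_growth_vector}(\textit{ii}).
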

	\begin{proof}
		From Lemma \ref{lemma-rank3_M6}, the growth vector of the 3-plane distribution $\Delta$ on $M^6$ is always $(3,5,6)$. A routine calculation shows that the closely related distribution  $\hat{\Delta}_{s_0}$, parameterized by constant local sections $s_0 \in \underline{L}^*$, has growth vector generically $(3,4,6)$. It is simple to see that, on $M^6$, $\left[Z,X_{s_0}\right] = 0 = \left[Z,Y_{s_0}\right]$, so that $Z$ is a Cauchy characteristic of $\hat{\Delta}_{s_0}$. Then by Cauchy reduction \cite[\S 3.4]{Stormark_2000}, we obtain the 2-plane distribution $\bar{\Delta}_{s_0}$ on $M^5_{s_0}$ defined above. As in Lemma \ref{lemma-horizontal_distribution_M5}, the growth vector of the 2-plane distribution is generically $(2,3,5)$. The possible growth vector restrictions, as seen in the statements of Theorem \ref{thm-general_growth_vector}, are: $(2)$, if integrable; $(2,3)$, and $(2,3,4)$. Since $\bar{X}_{s_0},\bar{Y}_{s_0}$ depend linearly on $s_0$, each of these possibilities can be checked to be independent of $s_0$.
	\end{proof}
	\par We call the process in Lemma \ref{lemma-cauchy_reduction} \emph{derived} Cauchy reduction. Per this result, it suffices to consider the case where $s_0 = 0$.  There are several advantages to considering the zero section of $L^*$ in $M^6$. We have the following results that summarize this discussion and follow from Lemmas \ref{lemma-cauchy_characteristic} and \ref{lemma-cauchy_reduction}.
	\begin{theorem}
		\label{thm-5-manifold}
		The manifold $M^5 := M^5_{s_0=0}$, which by definition is the restriction of the zero section of the line sub-bundle $L^* \subset E$ to $M^6$, is canonically diffeomorphic to both $Z_B^c \subset \mathrm{Tot}(B)$, the complement of the zero section $Z_B$ in the total space $\mathrm{Tot}(B)$ of the rank-3 vector bundle $B = E / L^*$ over $\Sigma$, and the analogous submanifold of the first order jet space $J^1$ of $\Sigma$. Then the process of derived reduction as in Lemma \ref{lemma-cauchy_reduction} and evaluating at the zero section $s_0=0$ equips $M^5$ with a $2$-plane distribution $\bar{\Delta} := \bar{\Delta}_{s_0=0}$ whose geometry is determined by the projective invariants of $\Sigma$.
	\end{theorem}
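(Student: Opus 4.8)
The plan is to treat this statement as a structured synthesis of the machinery built in \S\ref{ss-rank-3_pfaffian} and \S\ref{ss-cauchy} rather than as a fresh computation: the two diffeomorphism claims are coordinate bookkeeping once the fibre structure is unwound, the distribution $\bar\Delta$ is the specialization of Lemma \ref{lemma-cauchy_reduction} to $s_0 = 0$, and the dependence on projective data is inherited from Lemma \ref{lemma-rank3_M6} and Proposition \ref{prop-rank3_M6-proj_applicable}. So I would organize the proof around those three assertions in turn.

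First I would establish the two canonical diffeomorphisms. Recall from \S\ref{ss-rank-3_pfaffian} that $M^6 = \mathrm{Tot}(E)$ carries fibre coordinates $(z,p,q,s)$ adapted to the splitting \eqref{eq-splitting}, with $z$ the $L$-coordinate, $(p,q)$ the $T\Sigma$-coordinates, and $s$ the $L^*$-coordinate in the frame $e = (\psi,\psi_x,\psi_y,\psi_{xy})$. The locus $M^5 = \{s=0\}$ is thus where the $L^*$-fibre component vanishes, i.e.\ $\mathrm{Tot}(L \oplus T\Sigma) \subset M^6$. The quotient map $E \to B = E/L^*$ restricts on $\{s=0\}$ to a fibrewise isomorphism: its kernel $L^*$ meets the complement $L \oplus T\Sigma$ trivially, and a rank count gives surjectivity, so the splitting \eqref{eq-splitting} furnishes exactly the section of $0 \to L^* \to E \to B \to 0$ realizing $M^5 \cong \mathrm{Tot}(B)$. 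For the second identification, the natural projection $J^2 \to J^1$ forgetting $(z_{xx},z_{xy},z_{yy})$ restricts on $M^5 \subset M^6 \subset J^2$ to $(x,y,z,p,q) \mapsto (x,y,z,p,q)$; since \eqref{eq-M6_eqs} determine $z_{xx},z_{yy}$ from the remaining coordinates and $s = z_{xy} = 0$, this is a diffeomorphism onto $J^1$. Both maps depend only on the splitting \eqref{eq-splitting} and the jet projection, so I would emphasize that they are intrinsic and hence canonical.

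Next I would produce the distribution. The reduction is legitimized by Lemma \ref{lemma-cauchy_characteristic}, identifying $Z = \partial_s$ as the unique Cauchy characteristic of $\Delta'$, and by Lemma \ref{lemma-cauchy_reduction}, verifying that the family $\bar\Delta_{s_0}$ is well defined on each level set. Specializing to $s_0 = 0$ yields $\bar\Delta = \bar\Delta_{s_0=0} = \mathrm{span}\{\bar X_0,\bar Y_0\}$ on $M^5$: from \eqref{eq-level_set_vector_fields}, setting $s_0 = 0$ removes the $s_0\,\partial_q$ and $s_0\,\partial_p$ terms, leaving vector fields with no $\partial_s$-component, so $\bar X_0,\bar Y_0$ are tangent to $\{s=0\}$ and descend without modification. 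Under $M^5 \cong \mathrm{Tot}(B)$ these are precisely of the horizontal form \eqref{eq-general_horizontal_distribution}, so $\bar\Delta$ is the horizontal distribution of an induced connection $\bar\nabla$ on $B$ whose connection $1$-forms are read off from the coefficients $b,c,\mu,\nu$ of the canonical system. By Lemma \ref{lemma-rank3_M6} and Proposition \ref{prop-rank3_M6-proj_applicable} this data is determined by (and determines) the projective geometry of $\Sigma$, with Proposition \ref{prop-canonical_invariants} relating $b,c$ to the projective metric $\varphi$ and cubic form $\Phi$, and Theorem \ref{thm-general_growth_vector} governing the growth vector of $\bar\Delta$ through the curvature of $\bar\nabla$ and its covariant derivative.

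The one genuine subtlety, and the step I would handle most carefully, is the precise sense of \emph{determined by the projective invariants}. For a very general $\Sigma$ this is unambiguous by Proposition \ref{prop-rank3_M6-proj_applicable}: the canonical system, hence $\bar\Delta$, is fixed up to gauge by $\{\varphi,\Phi,\ell,r\}$. For projectively applicable surfaces the coefficients $\mu,\nu$ carry additional deformation parameters not pinned down by $\varphi$ and $\Phi$ alone, so the claim must be read as asserting that $\bar\Delta$ is determined by the full canonical data $(b,c,\mu,\nu)$ — equivalently, by the surface together with a chosen projective deformation — rather than by $\varphi$ and $\Phi$ in isolation. I would make this distinction explicit, since it is exactly the phenomenon isolated in Lemma \ref{lemma-cauchy_reduction} and Proposition \ref{prop-rank3_M6-proj_applicable} that makes the horizontal-distribution viewpoint sensitive to the extrinsic geometry emphasized in the introduction.
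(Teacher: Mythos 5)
Your proposal is correct and takes essentially the same route as the paper, which states this theorem without a separate proof, presenting it as a summary that ``follows from Lemmas \ref{lemma-cauchy_characteristic} and \ref{lemma-cauchy_reduction}'' together with the preceding discussion: the identifications $M^5 \cong \mathrm{Tot}(B) \cong J^1$ are the same coordinate/splitting bookkeeping you carry out, the distribution is the $s_0=0$ specialization of the derived Cauchy reduction, and your reading of ``determined by the projective invariants'' (unambiguous for very general surfaces, parameterized by projective deformations for applicable ones) matches Proposition \ref{prop-rank3_M6-proj_applicable}. Your identification of $\bar\Delta$ as the horizontal distribution of a connection $\bar\nabla$ on $B$ anticipates what the paper defers to Corollary \ref{cor-connection_M5}, but this is consistent with, not divergent from, the paper's development.
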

	\par Finally we may connect this discussion back to the ideas in \S \ref{s-rank_3}, by showing that $\bar{\Delta}$ is the horizontal distribution on $M^5=\mathsf{Z}_B^c\subset\mathrm{Tot}(B)$ of a $\mathrm{G}$-connection $\bar{\nabla}$ on $B$, where $\mathrm{G} = \mathbb{K}^* \times \mathrm{SO}(1,1) \subset \mathrm{P}$ is the largest subgroup that both stabilizes the direct sum structure $B = L \oplus T\Sigma \subset E$ of the zero section defining $B$, and acts conformally on the constant tensor $h$ in Equation (\ref{eq-const_form}). Here $\mathrm{G}$ is embedded diagonally in $\mathrm{P}$ as $g = \mathrm{diag}(\lambda,\exp(\alpha),\exp(-\alpha),\lambda^{-1})$, for $\lambda \in \mathbb{K}^*$ and $\alpha \in \mathbb{K}$. Notice that $B$ is again trivial and $\underline{B} = \mathrm{span}\,\{z,p,q\}$.
	\begin{corollary}
		\label{cor-connection_M5}
		The 2-plane distribution $\bar{\Delta}$ on $M^5=\mathsf{Z}_B^c$ is the horizontal distribution of a $\mathrm{G} = \mathbb{K}^* \times \mathrm{SO}(1,1)$ connection $\bar{\nabla}$ the rank-3 bundle $B$, whose matrix $\bar{\omega}$ of connection 1-forms is given by
		\begin{equation}
			\label{eq-connection_B}
			\bar{\omega} =\left(\begin{array}{ccc}
				0 & d x & d y \\
				\mu \, d x & 0 & b \, d x  \\
				\nu \, d y & c \, d y & 0 \\
			\end{array}\right) \,.
		\end{equation}
	\end{corollary}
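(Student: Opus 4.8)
The plan is to verify Corollary \ref{cor-connection_M5} by directly computing the horizontal distribution of the candidate connection $\bar{\nabla}$ on $B = L \oplus T\Sigma$ and matching it against the derived-reduced distribution $\bar{\Delta}$ obtained in Theorem \ref{thm-5-manifold}. Since Proposition \ref{prop-total_space_EDS} gives an explicit formula (\ref{eq-general_horizontal_distribution}) for the horizontal vector fields of any $\mathrm{G}$-connection in terms of its Christoffel symbols, the heart of the argument is algebraic bookkeeping: read off the $\Gamma^\alpha_{i\beta}$ from the matrix $\bar\omega$ in (\ref{eq-connection_B}), substitute into (\ref{eq-general_horizontal_distribution}) with $n=2$, $k=3$ and fibre coordinates $(z,p,q)$, and confirm that the two resulting horizontal vector fields coincide with $\bar{X}_{s_0=0}, \bar{Y}_{s_0=0}$ coming from (\ref{eq-level_set_vector_fields}) with $s_0=0$.

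Concretely, I would first identify $\bar\omega$ as the upper-left $3\times 3$ block of the canonical connection form (\ref{eq-connection_canonical_sys}); this is exactly the rank-3 reduction $B = E/L^*$ described at the start of \S \ref{s-derived_reduction}, with the column and row corresponding to the $L^*$-summand (the $e_3$/$s$ direction) deleted. Writing $\bar\omega = (\bar\omega^\alpha_\beta)$ with indices $\alpha,\beta \in \{0,1,2\}$ matching the frame $\{z,p,q\}$, the nonzero entries are $\bar\omega^0_1 = \bar\omega^1_0\cdot(\text{placeholder})$—more precisely $\bar\omega^0_1 = dx$, $\bar\omega^0_2 = dy$, $\bar\omega^1_0 = \mu\,dx$, $\bar\omega^1_2 = b\,dx$, $\bar\omega^2_0 = \nu\,dy$, $\bar\omega^2_1 = c\,dy$. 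Plugging the corresponding $\Gamma^\alpha_{i\beta}$ into (\ref{eq-general_horizontal_distribution}) produces $X_1 = \partial_x + p\,\partial_z + (bq+\mu z)\partial_p + s_0\,\partial_q$ evaluated at $s_0=0$, and symmetrically for $X_2$; this reproduces $\bar{X}, \bar{Y}$ exactly. I would then confirm that $\bar\nabla$ is genuinely a $\mathrm{G}$-connection for $\mathrm{G} = \mathbb{K}^* \times \mathrm{SO}(1,1)$ by checking that $\bar\omega$ takes values in the Lie algebra $\mathfrak{g}$ of the stated diagonal subgroup—equivalently, that $\bar\omega$ is compatible with both the splitting $B = L \oplus T\Sigma$ and the conformal class of $h$ in (\ref{eq-const_form}), which holds because the off-diagonal blocks of $\bar\omega$ respect the grading and the induced action on the $T\Sigma$-block preserves $[h]$.

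The step I expect to require the most care is the verification that deleting the $L^*$-row and column is compatible with the derived-reduction construction, i.e., that the connection $\bar\nabla$ on the quotient $B = E/L^*$ is well-defined and that its horizontal distribution agrees with $\bar\pi_*(\Delta_{s_0})$ rather than merely with the naive truncation of (\ref{eq:canonical_Delta}). The subtlety is that $\Delta$ on $M^6$ is not horizontal for the flat connection $\nabla$ on $E$ (its vector fields include the $\partial_s$ direction through $Z$ and the $s$-dependent terms in $X,Y$); it is precisely the act of restricting to the level set $s_0=0$ and pushing forward that strips away the $L^*$-dependence and recovers a horizontal distribution for the reduced bundle. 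I would argue this by noting that the $s$-dependent terms in (\ref{eq:canonical_Delta}) are exactly the entries of $\bar\omega$ that would have fed into the deleted $e_3$-column, so that setting $s_0=0$ annihilates them and the reduced vector fields (\ref{eq-level_set_vector_fields}) manifestly match (\ref{eq-general_horizontal_distribution}) for $\bar\omega$. The remaining claim—that $\bar{\Delta}$'s geometry is controlled by the projective invariants of $\Sigma$—then follows from Theorem \ref{thm-general_growth_vector} applied to the curvature of $\bar\nabla$ together with Proposition \ref{prop-canonical_invariants}, since the entries $b,c,\mu,\nu$ of $\bar\omega$ are precisely the data encoding $\Phi$, $\varphi$, and the deformation parameters.
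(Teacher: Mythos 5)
Your main verification is correct and is essentially the paper's own proof run in the dual direction. The paper pulls the rank-3 EDS $\Omega$ back along the zero-section inclusion $\iota_0 : M^5 \hookrightarrow M^6$, obtains the 1-forms (\ref{eq-reduced_connection-1-forms}), recognizes them as connection forms in the sense of Proposition \ref{prop-total_space_EDS} for the matrix (\ref{eq-connection_B}), and then invokes Lemma \ref{lemma-horizontal_distribution_M5} to identify their annihilator with $\bar{\Delta} = \mathrm{span}\,\{\bar{X},\bar{Y}\}$ from Theorem \ref{thm-5-manifold}. Your plan --- read off the $\Gamma^\alpha_{i\beta}$ from (\ref{eq-connection_B}), substitute into (\ref{eq-general_horizontal_distribution}), and match against (\ref{eq-level_set_vector_fields}) at $s_0=0$ --- is the same bookkeeping carried out on the vector fields instead of the annihilating 1-forms, and your observation that the $s$-dependent terms of (\ref{eq:canonical_Delta}) are precisely what feeds the deleted $e_3$-column of (\ref{eq-connection_canonical_sys}) is the right way to see that truncation agrees with derived reduction. (Two harmless slips: your index placement on $\bar\omega^\alpha_\beta$ is transposed relative to the paper's convention $de_\beta = \omega^\alpha_\beta e_\alpha$, and the horizontal fields of $\bar\omega$ carry no $s_0\,\partial_q$ term to begin with; neither affects the outcome.)

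The genuine problem is your proposed verification of the $\mathrm{G}$-structure claim. You say you would check that $\bar{\omega}$ ``takes values in the Lie algebra $\mathfrak{g}$ of the stated diagonal subgroup.'' That check fails: $\mathfrak{g}$ consists of diagonal matrices $\mathrm{diag}(t,a,-a)$ in the frame $\{z,p,q\}$, while $\bar{\omega}$ in (\ref{eq-connection_B}) is visibly non-diagonal; in particular $\bar{\nabla}$ does not even preserve the splitting $B = L \oplus T\Sigma$ (the entries $dx$, $dy$, $\mu\,dx$, $\nu\,dy$ mix $L$ with $T\Sigma$), so your fallback ``equivalently, compatible with the splitting and with $[h]$'' is neither equivalent to $\mathfrak{g}$-valuedness nor true of $\bar\omega$. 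Nor can this be repaired by a change of frame: for a very general surface $\bar{\Delta}$ has growth vector $(2,3,5)$ by Theorem \ref{thm-proj_surface_growth_vector} (\textit{iv}), and by the Ambrose--Singer interpretation in \S \ref{ss-horizontal} this forces the holonomy algebra generated by (\ref{eq-curvature_B}) and its covariant derivatives to be at least 3-dimensional, whereas $\mathfrak{g}$ is only 2-dimensional; hence $\bar\omega$ is not $\mathfrak{g}$-valued in any gauge. What the corollary means --- see the paragraph preceding it --- is that $\mathrm{G}$ is the \emph{structure (gauge) group} of $B$: it is the largest subgroup of $\mathrm{P}$ stabilizing the direct-sum structure of the zero section defining $B$ and acting conformally on the constant tensor $h$ in (\ref{eq-const_form}), and it acts on $\bar{\nabla}$ by gauge transformations $\bar\omega \mapsto g\bar\omega g^{-1} + dg\cdot g^{-1}$. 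The ``routine check'' left implicit by the paper is that this stabilizer is exactly the stated diagonally embedded group, not that the connection matrix lies in its Lie algebra, and your proof should be amended accordingly.
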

	\begin{proof}
		Let $\iota_0 : M^5 \hookrightarrow M^6$ be the inclusion map induced by the zero section of $L^*$. Then the EDS $\bar{\Omega} := \iota_0^*\Omega$ is a rank-3 Pfaffian system on $M^5$, generated by the 1-forms
		\begin{equation}
			\label{eq-reduced_connection-1-forms}
			\begin{aligned}
				\bar{\omega}_z &= dz - pdx - qdy, \, \\
				\bar{\omega}_x &= dp - (bq + \mu z)dx, \, \\
				\bar{\omega}_y &= dq - (cp + \nu z)dy \, .
			\end{aligned}
		\end{equation}
		These 1-forms define the desired connection $\bar{\nabla}$ on $B$ as in Proposition \ref{prop-total_space_EDS}, yielding the matrix $\bar{\omega}$ of connection 1-forms in (\ref{eq-connection_B}). It is a routine manner to check that $\bar{\nabla}$ is a $\mathrm{G} = \mathbb{K}^* \times \mathrm{SO}(1,1)$ connection. Then, by Lemma \ref{lemma-horizontal_distribution_M5}, $\bar{\Omega} = \mathrm{ann}(\bar{\Delta})$ is the annihilator of the 2-plane distribution $\underline{\bar{\Delta}} = \mathrm{span} \, \{ \bar{X},\bar{Y}\}$ from Theorem \ref{thm-5-manifold} with
		\begin{equation}
			\label{eq-horizontal_proj_vector_fields}
			\begin{aligned}
				\bar{X} &= \partial_x + p  \partial_z + (b q + \mu z) \partial_{p} \, , \\
				\bar{Y} &= \partial_y + q  \partial_z  + (c p + \nu z)  \partial_{q} \, . \\
			\end{aligned}
		\end{equation}
		Thus $\bar{\Delta}$ is horizontal for $\bar{\nabla}$.
	\end{proof}
	\par It is very important to note that, as opposed to the situation in Lemma \ref{lemma-rank3_M6}, the growth vector of $\hat{\Delta}_{s_0}$ is only \emph{generically} $(3,4,6)$, and the growth vector \emph{is} sensitive to the vanishing of projective invariants as in \S \ref{ss-geometry_proj_surf}. Hence a similar statement is true for the generically $(2,3,5)$-distribution $\bar{\Delta}$ on $M^5$ in Theorem \ref{thm-5-manifold}, which shows that more refined features of the projective geometry of $\Sigma$ show up in this setting. These relationships will be the theme of \S \ref{s-2_plane_distr}. 
	\par It also very important to note that the group $G$ acts \emph{only} on the invariants $\varphi$ and $\Phi$ for $\Sigma$, which are \emph{independent} of any immersion $\psi : \Sigma \to \mathbb{P}^3$. This does not affect a very general surface, but all projectively applicable surfaces have immersions impacted by the presence of projective deformations; e.g., the discussion on quadric surfaces in \S \ref{sss-quadrics}. In the next section, we wish to study only how the projective geometry of $\Sigma \subset \mathbb{P}^3$ influences the 2-plane distribution $\bar{\Delta}$ on $M^5$ and conversely, and \emph{not} of the pair $(\Sigma,\psi)$.
	\section{Horizontal 2-plane distributions from projective surfaces}
	\label{s-2_plane_distr}
	Having reduced the problem studying the local geometry of a projective surface to a suitable $\mathrm{G}$-connection on a rank-3 bundle over the surface, we study here the confluence of results from \S \ref{s-rank_3} and \S \ref{s-derived_reduction} for projective surfaces $\Sigma \subset \mathbb{P}^3$, independent of the data of any immersion. We again emphasize that the integrability condition $\mathbf{R} = 0$ of the connection $\nabla$ on the rank-4 bundle $E \to \Sigma$ is satisfied for \emph{any} surface in $\mathbb{P}^3$ per Lemma \ref{lemma-integrability}. However, the connection $\bar{\nabla}$ on the rank-3 bundle $B \to \Sigma$ detects more refined aspects of the projective geometry of $\Sigma$. 
	\par An elementary computation reveals that the curvature tensor $\bar{\mathbf{R}} = d\bar{\omega} - \bar{\omega} \wedge \bar{\omega}$ of $\bar{\nabla}$ is given by 
	\begin{equation}
		\label{eq-curvature_B}
		\bar{\mathbf{R}} =\left(\begin{array}{ccc}
			0 & -(b\nu + \mu_y)dx \wedge dy & (c\mu + \nu_x) dx \wedge dy \\
			0 & -bc \, dx \wedge dy & (c_x + \nu)dx \wedge dy  \\
			0 & -(b_y+\mu)dx \wedge  dy & bc \, dx \wedge dy \\
		\end{array}\right) \,.
	\end{equation}
	Our first utility of the curvature $\bar{\mathbf{R}}$ of $\bar{\nabla}$ is the following observation. Upon inspection one sees that the last row of the connection form $\omega$ for $\nabla$ on the rank-4 bundle $E \to \Sigma$, encoded by the 1-form $\omega_s$ in (\ref{eq-total_derivative}) and whose coefficients are expressed in terms of total derivatives, is simply obtained from suitable contractions of the components of $\bar{\mathbf{R}}$ with the coordinate tangent vectors $\partial_x,\partial_y$. In fact, we have
	\begin{theorem}
		\label{thm-bianchi}
		The integrability conditions $\mathbf{R} = 0$ of the canonical system (\ref{eq-canonical_sys}) are a consequence of the differential Bianchi identity $d_{\bar{\nabla}}\bar{\mathbf{R}} = 0$.
	\end{theorem}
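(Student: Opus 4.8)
The plan is to pin down exactly which entries of the rank-4 curvature $\mathbf{R} = d\omega - \omega\wedge\omega$ carry the three conditions (\ref{eq-integrability_conditions}), and then to recognize those entries as the differential Bianchi identity for $\bar{\nabla}$ read through the contraction that produces $\omega_s$. First I would decompose $\nabla$ along the splitting $E = B \oplus L^*$ cut out by the zero section: writing $\omega$ in block form with top-left $3\times 3$ block $\bar{\omega}$, right column $\eta = (\omega^0_3,\omega^1_3,\omega^2_3)^T = (0,dy,dx)^T$, bottom row $\xi = (\omega^3_0,\omega^3_1,\omega^3_2)$, and $\omega^3_3 = 0$, the flatness condition $\mathbf{R} = 0$ separates into a Gauss block $\bar{\mathbf{R}} - \eta\wedge\xi$, the two Codazzi blocks $d_{\bar{\nabla}}\eta$ and $d_{\bar{\nabla}}\xi = d\xi - \xi\wedge\bar{\omega}$, and the scalar $\xi\wedge\eta$. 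A direct check, which is the computation already invoked in Lemma \ref{lemma-integrability}, shows that the three components of $d_{\bar{\nabla}}\xi$, namely $\mathbf{R}^3_0,\mathbf{R}^3_1,\mathbf{R}^3_2$, reproduce the equations (\ref{eq-integrability_conditions}) in order. Moreover, once $\xi$ is taken to be the contraction of $\bar{\mathbf{R}}$ recorded before the theorem, the Gauss block vanishes identically (this is the defining property of that contraction), while $d_{\bar{\nabla}}\eta = 0$ and $\xi\wedge\eta = 0$ follow at once from (\ref{eq-curvature_B}) and (\ref{eq-connection_canonical_sys}); thus $\mathbf{R} = 0$ collapses precisely to $\mathbf{R}^3_\beta = 0$.

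Next I would substitute $\omega^3_\beta = \iota_{\partial_y}\bar{R}^\beta_1 + \iota_{\partial_x}\bar{R}^\beta_2$ into $\mathbf{R}^3_\beta = d\omega^3_\beta - \omega^3_\gamma\wedge\bar{\omega}^\gamma_\beta$. Because the bundle columns $1,2$ are paired with the tangent directions $\partial_x,\partial_y$ through the off-diagonal normal form $h = dx\otimes dy + dy\otimes dx$ of the second fundamental form in asymptotic coordinates, this contraction is intrinsically the operation $\bar{\mathbf{R}} \mapsto \xi \in \underline{\mathrm{Hom}}(B,L^*)\otimes\Omega^1(\Sigma)$ that solves the Gauss relation $\bar{\mathbf{R}} = \eta\wedge\xi$. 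Differentiating this relation covariantly and using $d_{\bar{\nabla}}\eta = 0$ yields $d_{\bar{\nabla}}\bar{\mathbf{R}} = -\eta\wedge d_{\bar{\nabla}}\xi = -\eta\wedge\mathbf{R}^3$, so that the three equations $\mathbf{R}^3_\beta = 0$ are exactly the components of the differential Bianchi identity $d_{\bar{\nabla}}\bar{\mathbf{R}} = 0$ transported through the contraction by $\eta$.

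The hard part is this last identification, and it is dimensional rather than algebraic. On the two-dimensional base $\Sigma$ the bare object $d_{\bar{\nabla}}\bar{\mathbf{R}}$ is an $\mathrm{End}(B)$-valued $3$-form and so vanishes for trivial reasons; all of the genuine content of the Bianchi identity is carried instead by its first-order, contracted form, which is precisely what the total-derivative expression $\omega_s$ in (\ref{eq-total_derivative}) packages. To extract that content I would read the identity on the total space rather than on $\Sigma$: interpreting the total derivative operators $D_x,D_y$ in (\ref{eq-total_derivative}) as the horizontal derivatives of $\bar{\nabla}$ along the distribution of Corollary \ref{cor-connection_M5}, the equation $d_{\bar{\nabla}}\xi = 0$ becomes the covariant closedness of the $h$-trace of $\bar{\mathbf{R}}$, equivalently the Jacobi relation among the bracket-generated vector fields of Lemma \ref{lemma-horizontal_distribution_M5} that encode $\bar{\mathbf{R}}$ and $\bar{\nabla}\bar{\mathbf{R}}$. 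Matching the correction terms produced by $d_{\bar{\nabla}}h$ against the connection terms $\xi\wedge\bar{\omega}$ is the step that outputs the three equations (\ref{eq-integrability_conditions}); verifying that this matching is exact, and that the dimensional degeneracy on $\Sigma$ is genuinely resolved by passing to the contracted identity, is where I expect the real work to lie.
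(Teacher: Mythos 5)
Your computational core is correct and is, in substance, exactly the paper's proof: the paper's own argument is the single sentence that a direct computation recovers the integrability equations, and the computation it gestures at is the one you organize through the block decomposition. With $\eta=(0,dy,dx)^T$ and $\xi$ the bottom row of (\ref{eq-connection_canonical_sys}), one checks that identically (no conditions on $b,c,\mu,\nu$) $\bar{\mathbf{R}}=\eta\wedge\xi$, $d\eta-\bar{\omega}\wedge\eta=0$ and $\xi\wedge\eta=0$, so the rank-4 flatness $\mathbf{R}=0$ collapses to the bottom-row (Codazzi) equation $d\xi-\xi\wedge\bar{\omega}=0$, whose three components are precisely the three equations of (\ref{eq-integrability_conditions}), in order. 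This part of your proposal is complete and correct, and is more explicit than what the paper records.

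The issue is the final step you defer as ``the real work,'' and here you should be more decisive: that step cannot be carried out, for the reason you yourself identify. On the two-dimensional base every $\mathrm{End}(B)$-valued $3$-form vanishes, so the literal statement $d_{\bar{\nabla}}\bar{\mathbf{R}}=0$ is vacuous; more fundamentally, the differential Bianchi identity holds for \emph{every} connection, while (\ref{eq-integrability_conditions}) are nontrivial constraints on $(b,c,\mu,\nu)$, so no genuine identity can imply them. Your relation $d_{\bar{\nabla}}\bar{\mathbf{R}}=-\eta\wedge d_{\bar{\nabla}}\xi$ gives only the useless direction: wedging a $2$-form on $\Sigma$ with $\eta$ produces a $3$-form, hence zero, so this map cannot be inverted to pass from $d_{\bar{\nabla}}\bar{\mathbf{R}}=0$ back to $d_{\bar{\nabla}}\xi=0$. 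The only tenable reading of the theorem---and evidently what the paper's one-line proof means---is that ``differential Bianchi identity'' refers to the contracted, non-vacuous equation $d_{\bar{\nabla}}\xi=0$, i.e.\ covariant closedness of the $\mathrm{Hom}(B,L^*)$-valued $1$-form obtained by contracting $\bar{\mathbf{R}}$ with $\partial_x,\partial_y$ as in the paragraph preceding the theorem. Your block decomposition already proves exactly that statement; the correct way to finish is to say that the theorem holds once this (abuse of) terminology is made explicit, not to keep searching for a matching that would resolve the dimensional degeneracy, because none exists.
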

	\begin{proof}
		A direct computation of the necessary $d_{\bar{\nabla}}\bar{\mathbf{R}} = 0$ recovers the integrability equations in Lemma \ref{lemma-integrability}.
	\end{proof}
	We now study the associated projective geometry of $\Sigma$ according to the curvature $\bar{\mathbf{R}}$ and covariant derivatives with respect to $\bar{\nabla}$, in the spirit of \S \ref{ss-horizontal}.
	\begin{lemma}
		\label{lemma-vanishing_curvature}
		The curvature tensor $\bar{\mathbf{R}} = 0$ if and only if the surface is a ruled surface whose ruling curve lies in the intersection of two linear complexes. In particular, $\bar{\mathbf{R}} = 0$ for a quadric surface. 
	\end{lemma}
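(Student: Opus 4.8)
The plan is to read the characterization directly off the curvature (\ref{eq-curvature_B}) and then translate the resulting differential constraints into the stated projective-geometric condition. First I would set $\bar{\mathbf{R}} = 0$ and record the resulting scalar equations; the two diagonal entries force $bc = 0$, so by Proposition \ref{prop-canonical_invariants} and the discussion in \S\ref{sss-ruled_surfaces}, $\Sigma$ is either ruled or a quadric. Using the $x \leftrightarrow y$ symmetry (which interchanges $b \leftrightarrow c$ and $\mu \leftrightarrow \nu$) I would assume $c \equiv 0$, which is precisely the ruled condition; the $(2,3)$ entry then gives $\nu = 0$, and the remaining off-diagonal entries collapse to the two conditions $\mu = -b_y$ and $\mu_y = 0$. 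Comparing with the general ruled canonical system (\ref{eq-canonical_sys_ruled}) these say exactly that $b$ is affine in $y$ (i.e. $\alpha = 0$) and $\delta = -\beta$.

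The geometric heart is to interpret these constraints through the generating curves. Feeding $\alpha = 0$, $\delta = -\beta$ into the ruling ODE system (\ref{eq-ruled_surface_ODE_sys}) yields $\ddot t = 0$ (together with $\ddot s = -\beta s + \gamma t$); equivalently, and more invariantly, the last structure equation of the canonical frame reduces to $d e_3 = \mu_y\, dx\, e_0 + (b_y + \mu)\, dx\, e_2$, so that $\bar{\mathbf{R}} = 0$ holds (in the ruled case) if and only if $e_3 = \psi_{xy}$ is a constant vector $v_0$. Thus the ruling direction $t(x) = \psi_y$ sweeps out the fixed projective line $\ell_1 = \mathbb{P}\langle t_0, v_0\rangle$. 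To produce the two linear complexes I would pass to Plücker coordinates: writing $P(x) = s \wedge t$ for the ruling, a short computation using $\ddot t = 0$ and $\ddot s = -\beta s + \gamma t$ shows that $P$ satisfies a third-order linear ODE whose only inhomogeneous term is the \emph{constant} bivector $C = t_0 \wedge v_0$, so that $\mathrm{span}\{P(x)\} \subseteq \mathrm{span}\{P, \dot P, \ddot P, C\}$ has dimension at most four. Its annihilator in $\wedge^2(V^4)^*$ then has dimension at least two, and any two independent elements are linear complexes whose intersection contains every ruling.

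For the converse I would reverse this chain: a ruled surface whose rulings lie in a linear congruence has $\dim \mathrm{span}\{P(x)\} \le 4$, and in asymptotic--canonical coordinates this confinement forces $\psi_{xy}$ into a fixed direction, i.e. $\ddot t = 0$, which returns $b$ affine in $y$ and $\mu = -b_y$, hence $\bar{\mathbf{R}} = 0$. Finally the quadric is the degenerate subcase $b = c = 0$: the curvature equations then force $\mu = \nu = 0$ in the canonical representative (consistent with the immersion-independent viewpoint of \S\ref{s-derived_reduction}, since $\mu, \nu$ only record the choice of immersion), whence $\psi_{xx} = \psi_{yy} = 0$ gives $\ddot s = \ddot t = 0$, both generators are lines, and the rulings lie in (in fact three) linear complexes.

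I expect the main obstacle to be the precise dictionary between the differential condition and the phrase ``intersection of two linear complexes,'' together with the converse: one must check that the Plücker confinement $\dim \mathrm{span}\{P\} \le 4$ is genuinely equivalent to $\ddot t = 0$ \emph{after} normalizing to asymptotic--canonical coordinates, since an arbitrary directrix parametrization of a congruence ruling need not satisfy $\ddot t = 0$ and the flecnodal/asymptotic normalization of the $x$-parameter must be invoked. I would lean on the classical theory of ruled surfaces (Wilczynski \cite{MR0131232}, Sasaki \cite[\S 1.2]{MR2216951}) to certify that these are exactly the ruled surfaces sitting in a linear congruence.
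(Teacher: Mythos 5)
Your reduction to the ruled case tracks the paper's proof step for step: $bc=0$ from the diagonal of (\ref{eq-curvature_B}), the symmetry reduction to $c=0$, then $\nu=0$, and the residual equations $\mu_y=0$, $\mu+b_y=0$, which you correctly match to $\alpha=0$, $\delta=-\beta$ in (\ref{eq-canonical_sys_ruled}) and to the ruling ODEs $\ddot s=-\beta s+\gamma t$, $\ddot t=0$. Your repackaging of these two equations as $de_3=0$ (constancy of $\psi_{xy}$) is correct and is a nice invariant observation the paper does not make. Where you genuinely diverge is the line-geometry step: the paper first normalizes $\beta=0$ via the transformation theory of the system (\ref{eq-ruled_surface_ODE_sys}) (Sasaki, Theorem 1.8) and only then invokes Wilczynski's classical theory; you skip the normalization and go straight to a Pl\"ucker computation.

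That omission is a genuine gap, because your key claim --- that $P=s\wedge t$ satisfies a third-order linear ODE whose only inhomogeneity is the constant bivector $C=t_0\wedge v_0$ --- is false when $\beta\neq 0$. With $\dot t=v_0$ one gets $\ddot P=-\beta P+2\,\dot s\wedge v_0$ and then
\begin{equation*}
\dddot P \;=\; -\dot\beta\,P-\beta\,\dot P-2\beta\, (s\wedge v_0)+2\gamma\,C\,,
\end{equation*}
and the bivector $s\wedge v_0$ is not expressible in $P,\dot P,\ddot P,C$ (only the combination $\dot s\wedge t+s\wedge v_0=\dot P$ is available). The obstruction is real, not an artifact of the computation: take $\beta\equiv 1$, $\gamma=x$, i.e. the system $z_{xx}=(x+y)z_y-z$, $z_{yy}=0$, which satisfies (\ref{eq-integrability_conditions}) and has $\bar{\mathbf{R}}=0$. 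Its rulings join $s(x)=(\cos x,\sin x,x,x^2-2)$ to $t(x)=(0,0,1,x)$, and the Pl\"ucker curve $(0,\cos x,\,x\cos x,\,\sin x,\,x\sin x,\,2)$ spans a \emph{five}-dimensional subspace of $\wedge^2\mathbb{K}^4$, so your dimension bound ``$\leq 4$'' and with it the existence of two independent complexes collapses; only the single complex $P_{12}=0$ survives. Thus your argument proves the statement only in the normalized gauge $\beta=0$, and the reduction to that gauge --- exactly the step the paper outsources to Sasaki's Theorem 1.8 --- cannot be elided: the example shows it carries the real content of the lemma (one must either justify that the normalization is available within the class $\bar{\mathbf{R}}=0$, or treat $\beta\neq 0$ directly). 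Your converse is only a sketch, but since the paper's own proof is equally silent on that direction, the $\beta$-normalization is the gap you must close for the Pl\"ucker route to work.
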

	\begin{proof}
		It is clear from (\ref{eq-curvature_B}) that if $\bar{\mathbf{R}} = 0$, we must have $bc = 0$; hence the projective metric $\varphi = 0$ vanishes and so $\Sigma$ is ruled. For a quadric, we have $b=0=c$ and $\mu = 0 = \nu$, so that $\bar{\mathbf{R}} = 0$. Assume without loss of generality that $c = 0$ and $b \neq 0$. As in \S \ref{sss-ruled_surfaces}, we may further assume that $\nu = 0$. Then the resulting equations from $\bar{\mathbf{R}} = 0$ are $\mu_y=0$ and $b_y+\mu = 0$, which are trivially solved as 
		\begin{equation}
			\label{eq-vanishing_curvature_ruled_surface}
			b = \beta(x)y + \gamma(x) \, , \quad \mu = - \beta(x) \, .
		\end{equation}
		Hence as in Equation (\ref{eq-canonical_sys_ruled}), the canonical system of such a surface is given by
		\begin{equation}
			\label{eq-ruled_surface_can_sys_R=0}
			\frac{\partial^2 z}{\partial x^2}= (\beta(x)y+\gamma(x))\frac{\partial z}{\partial y} -\beta(x) z  \, , \hspace{3mm}
			\frac{\partial^2 z}{\partial y^2} = 0 \, , 
		\end{equation}
		and the ruling curves $s(x),t(x)$ from $\psi(x,y) = s(x)+yt(x)$ satisfy (\ref{eq-ruled_surface_ODE_sys}), which here become
		\begin{equation}
			\label{eq-ruled_surface_ODE_sys_R=0}
			\begin{aligned}
				& \frac{d^2 s}{d x^2} = -\beta(x)s(x) + \gamma(x)t(x)  \, ,\\
				& \frac{d^2 t}{d x^2} = 0 \, .
			\end{aligned} 
		\end{equation}
		In fact, due to the relevant transformation properties of (\ref{eq-ruled_surface_ODE_sys}) (see Sasaki \cite[Theorem 1.8]{MR2216951}), we may consider $\beta=0$. Moreover, we have $\gamma = 0$ if and only if $\Sigma$ is a quadric. Such doubly ruled surfaces lie in the intersection of two linear complexes. In case $\gamma \neq 0$, the curve $s(x)$ can be shown to satisfy a \emph{reducible} fourth order ODE with vanishing first Wilczynski invariant \cite[\S 1.1]{MR2216951}, and hence lying in the intersection of two linear complexes.
	\end{proof}
	\begin{lemma}
		\label{lemma-ruled_surface_curvature}
		If $\Sigma$ is a general ruled surface as in \S \ref{sss-ruled_surfaces}, then $\bar{\mathbf{R}} \neq 0$, and $\bar{\nabla}_{\partial_y} \bar{\mathbf{R}} = 0$, where $\partial_y$ is tangent to the ruling lines on $\Sigma$.
	\end{lemma}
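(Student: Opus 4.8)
The plan is to exploit the explicit correspondence, set up in Lemma \ref{lemma-horizontal_distribution_M5}, between tangential covariant derivatives of $\bar{\mathbf{R}}$ and iterated Lie brackets of the horizontal vector fields $\bar X, \bar Y$ of Equation (\ref{eq-horizontal_proj_vector_fields}). Concretely, if $X_3 = [\bar X, \bar Y]$ is the vertical field encoding $\bar{\mathbf{R}}$ as in (\ref{eq-vertical_curvature}), then (\ref{eq-vertical_covariant_derivatives}) identifies $[\bar Y, X_3]$ with the vertical field encoding $\bar\nabla_{\partial_y}\bar{\mathbf{R}}$; thus proving $\bar\nabla_{\partial_y}\bar{\mathbf{R}} = 0$ is equivalent to showing the iterated bracket $[\bar Y, [\bar X, \bar Y]]$ vanishes. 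This reduces the entire statement to two short bracket computations with the explicit fields, sidestepping any need to track Christoffel symbols by hand.

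First I would specialize to the ruled normal form. As in \S \ref{sss-ruled_surfaces}, a singly ruled surface has $c = 0$ and $\nu = 0$, so that $\bar Y = \partial_y + q\partial_z$, while the integrability conditions (\ref{eq-integrability_conditions}) collapse to $\mu_{yy} = 0$ and $b_{yy} + 2\mu_y = 0$ (equivalently $b = \alpha(x)y^2 + \beta(x)y + \gamma(x)$ and $\mu = -\alpha(x)y + \delta(x)$). Substituting $c = \nu = 0$ into the curvature matrix (\ref{eq-curvature_B}) leaves only the entries $-\mu_y$ and $-(b_y + \mu) = -(\alpha y + \beta + \delta)$, which are generically nonzero; the precise claim $\bar{\mathbf{R}} \neq 0$ for a \emph{general} ruled surface is exactly the contrapositive of Lemma \ref{lemma-vanishing_curvature}, since the joint vanishing $\mu_y \equiv 0 \equiv b_y + \mu$ is precisely the degenerate case in which the ruling lies in the intersection of two linear complexes.

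The core step is the second bracket. Computing $X_3 = [\bar X, \bar Y]$ directly gives $X_3 = -\big(\mu_y z + (b_y + \mu)q\big)\partial_p$, in agreement with (\ref{eq-curvature_B}). Since $\bar Y = \partial_y + q\partial_z$ has no $p$-dependence we have $[\partial_p, \bar Y] = 0$, whence $[\bar Y, X_3] = \bar Y\big(-(\mu_y z + (b_y+\mu)q)\big)\,\partial_p$. Carrying out the directional derivative with $\bar Y = \partial_y + q\partial_z$ collapses the coefficient to $-\mu_{yy}z - (b_{yy} + 2\mu_y)q$, which is annihilated \emph{exactly} by the two ruled integrability conditions $\mu_{yy} = 0$ and $b_{yy} + 2\mu_y = 0$. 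Hence $[\bar Y, X_3] = 0$, which by (\ref{eq-vertical_covariant_derivatives}) is the desired identity $\bar\nabla_{\partial_y}\bar{\mathbf{R}} = 0$.

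The one point requiring care — more bookkeeping than genuine difficulty — is the identification of the single tangential derivative $\bar\nabla_{\partial_y}\bar{\mathbf{R}}$ with the bracket $[\bar Y, X_3]$ rather than a naive coordinate derivative of the curvature coefficients; it is the extra $q\partial_z$ term in $\bar Y$ (the connection contribution on $\mathrm{End}(B)$) that converts the coefficient into the combination $b_{yy} + 2\mu_y$. The pleasant payoff, worth recording in the proof, is that the vanishing of $\bar\nabla_{\partial_y}\bar{\mathbf{R}}$ is not accidental but is literally the pair of ruled-surface integrability conditions; combined with $\bar\nabla_{\partial_x}\bar{\mathbf{R}} \neq 0$ (verified by the companion bracket $[\bar X, X_3]$, which retains a nonzero $-f\,\partial_z$ term with $f = -(\mu_y z + (b_y+\mu)q)$), Theorem \ref{thm-general_growth_vector}(iii) then yields growth vector $(2,3,4)$ for $\bar\Delta$.
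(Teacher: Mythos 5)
Your proposal is correct and is essentially the paper's own (very terse) proof: both specialize to the ruled normal form $c=\nu=0$, where the integrability conditions (\ref{eq-integrability_conditions}) collapse to $\mu_{yy}=0$ and $b_{yy}+2\mu_y=0$, read off $\bar{\mathbf{R}}\neq 0$ from (\ref{eq-curvature_B}) outside the degenerate case of Lemma \ref{lemma-vanishing_curvature}, and then verify $\bar{\nabla}_{\partial_y}\bar{\mathbf{R}}=0$ by computation. The only cosmetic difference is that where the paper computes the covariant derivative of the curvature directly, you carry out the equivalent iterated bracket $[\bar{Y},[\bar{X},\bar{Y}]]=0$, which is the same calculation under the paper's own dictionary (\ref{eq-vertical_curvature})--(\ref{eq-vertical_covariant_derivatives}) from Lemma \ref{lemma-horizontal_distribution_M5}.
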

	\begin{proof}
		It follows from Equations  (\ref{eq-canonical_sys_ruled}) and (\ref{eq-curvature_B}) that for a general ruled surface, $\bar{\mathbf{R}} \neq 0$. Then a direct computation shows that  $\bar{\nabla}_{\partial_y} \bar{\mathbf{R}} = 0$.
	\end{proof}
	\par We may now characterize the horizontal 2-plane distribution $\bar{\Delta}$ on $M^5=\mathsf{Z}_B^c$ in terms of the projective geometry of $\Sigma \subset \mathbb{P}^3$ determined by the projective metric $\varphi$ and the cubic form $\Phi$.
	\begin{theorem}
		\label{thm-proj_surface_growth_vector}
		Let $\Sigma \subset \mathbb{P}^3$ be a projective surface and $\bar{\Delta}$ be the horizontal 2-plane distribution on $M^5=\mathsf{Z}_B^c$ of the connection $\bar{\nabla}$.
		\begin{enumerate}
			\item[(\textit{i})] $\bar{\Delta}$ is integrable if and only if $\Sigma$ is a ruled surface whose ruling lies in the intersection of two linear complexes. It is sufficient that $\Sigma$ be a quadric.
			\item[(\textit{ii})] There are no projective surfaces with growth vector $(2,3)$.
			\item[(\textit{iii})] $\bar{\Delta}$ has growth vector $(2,3,4)$ if and only if there exists $X \in \underline{T\Sigma}$ such that $\bar{\nabla}_X \bar{\mathbf{R}} = 0$. It is sufficient that $\Sigma$ be ruled.
			\item[(\textit{iv})] $\bar{\Delta}$ has growth vector $(2,3,5)$ if and only if $\bar{\mathbf{R}} \neq 0$ and is not covariantly constant in any local vector fields on $\Sigma$. It is sufficient that $\Sigma$ be very general.
		\end{enumerate}
	\end{theorem}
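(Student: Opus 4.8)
The plan is to read off all four cases from the bridge supplied by Theorem \ref{thm-general_growth_vector}, which expresses the growth vector of $\bar\Delta$ entirely through the vanishing behaviour of the curvature $\bar{\mathbf R}$ in (\ref{eq-curvature_B}) and its covariant derivative $\bar\nabla\bar{\mathbf R}$. Following Lemma \ref{lemma-horizontal_distribution_M5}, I would work with the vertical fields $X_3=[\bar X,\bar Y]$, $X_4=[\bar X,X_3]$, and $X_5=[\bar Y,X_3]$ generated by the horizontal frame (\ref{eq-horizontal_proj_vector_fields}); by (\ref{eq-vertical_curvature}) and (\ref{eq-vertical_covariant_derivatives}) these encode $\bar{\mathbf R}$ and $\bar\nabla_{\partial_x}\bar{\mathbf R},\bar\nabla_{\partial_y}\bar{\mathbf R}$, and $\mathrm{rank}\,\bar\Delta^{(2)}=2+\dim\mathrm{span}\{X_3,X_4,X_5\}$. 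Part (\textit{i}) is then immediate: Theorem \ref{thm-general_growth_vector}(\textit{i}) says integrability is equivalent to $\bar{\mathbf R}=0$, which Lemma \ref{lemma-vanishing_curvature} identifies with a ruled surface whose ruling lies in the intersection of two linear complexes, with the quadric as a special case.

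The crux is part (\textit{ii}), and the key is a structural feature of the tautological first row $(0,dx,dy)$ of $\bar\omega$ in (\ref{eq-connection_B}): it is precisely this row that contributes the terms $p\,\partial_z$ and $q\,\partial_z$ to $\bar X$ and $\bar Y$. Since the first (the $L$-) column of $\bar{\mathbf R}$ vanishes identically, the curvature field $X_3$ from (\ref{eq-vertical_curvature}) has no $\partial_z$-component. A short bracket computation then shows that the $\partial_z$-component of $X_4$ is $-X_3(p)$ and that of $X_5$ is $-X_3(q)$, which are exactly (minus) the $p$- and $q$-columns of $\bar{\mathbf R}$ read against the fibre coordinates. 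Now if the growth vector were $(2,3)$, then $\bar\Delta'$ would be integrable, and since $X_4,X_5$ are vertical while $\bar X,\bar Y$ are not, this forces $X_4,X_5\in\mathrm{span}_{C^\infty}\{X_3\}$; as $X_3$ has vanishing $\partial_z$-component, so would $X_4$ and $X_5$, whence the $p$- and $q$-columns of $\bar{\mathbf R}$ vanish. Combined with the always-vanishing $L$-column this gives $\bar{\mathbf R}=0$, contradicting $\bar{\mathbf R}\neq 0$; so no projective surface realises $(2,3)$. I would stress that only the distribution-level condition $X_4,X_5\in\mathrm{span}_{C^\infty}\{X_3\}$ is used, which is a priori weaker than $\bar\nabla\bar{\mathbf R}=0$, making the exclusion of the $(2,3)$ stratum robust.

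For part (\textit{iii}) I would combine Theorem \ref{thm-general_growth_vector}(\textit{iii}) with part (\textit{ii}): once the $(2,3)$ stratum is ruled out, a non-flat $\bar\Delta$ has growth vector $(2,3,4)$ exactly when there is a nonzero $X\in\underline{T\Sigma}$ with $\bar\nabla_X\bar{\mathbf R}=0$. Sufficiency for ruled surfaces follows from Lemma \ref{lemma-ruled_surface_curvature}: one has $\bar{\mathbf R}\neq 0$ and $\bar\nabla_{\partial_y}\bar{\mathbf R}=0$ along the ruling, while $\bar\nabla_{\partial_x}\bar{\mathbf R}\neq 0$ (otherwise part (\textit{ii}) would force flatness), so the growth vector is exactly $(2,3,4)$. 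As a consistency check, in asymptotic coordinates a ruled surface has $c=\nu=0$, so the $q$-column of $\bar{\mathbf R}$ vanishes, in agreement with $\partial_y$ being the covariantly constant direction.

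Part (\textit{iv}) is the complementary generic case and is where I expect the real work to lie. By the trichotomy established above (flat, $(2,3,4)$, or $(2,3,5)$, with $(2,3)$ impossible), growth vector $(2,3,5)$ is equivalent to $\bar{\mathbf R}\neq 0$ together with the absence of any covariantly constant direction. For the sufficiency of ``very general,'' $\bar{\mathbf R}\neq 0$ is clear since $bc\neq 0$. Using the $\partial_z$-component identity above, a covariantly constant direction $X=\lambda\partial_x+\rho\partial_y$ forces a linear dependence $\lambda\,(p\text{-column})+\rho\,(q\text{-column})=0$ between the two nonzero columns of $\bar{\mathbf R}$, which is impossible when the lower-right $2\times 2$ block of $\bar{\mathbf R}$, of determinant $(c_x+\nu)(b_y+\mu)-(bc)^2$, is nondegenerate. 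The hard part will be to show that this nondegeneracy is the correct algebraic shadow of the projective-geometric hypothesis that $\Sigma$ be very general, rather than a condition holding merely on a dense open set; I would aim to prove that a genuine covariantly constant direction with $\bar{\mathbf R}\neq 0$ would exhibit $\Sigma$ as ruled or projectively applicable, thereby reconciling the degenerate locus of this block with the stratification of \S \ref{ss-geometry_proj_surf}.
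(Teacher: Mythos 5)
Your parts (\textit{i})--(\textit{iii}) are correct. For (\textit{i}) and (\textit{iii}) you follow essentially the paper's own route, namely Theorem \ref{thm-general_growth_vector}(\textit{i}) combined with Lemma \ref{lemma-vanishing_curvature}, and Theorem \ref{thm-general_growth_vector}(\textit{iii}) combined with Lemma \ref{lemma-ruled_surface_curvature}; your extra observation that $\bar{\nabla}_{\partial_x}\bar{\mathbf{R}} \neq 0$ for a general ruled surface (needed to legitimately invoke Theorem \ref{thm-general_growth_vector}(\textit{iii})) is a detail the paper leaves implicit. Part (\textit{ii}) is where you genuinely diverge: the paper disposes of the $(2,3)$ stratum by asserting that a direct computation gives $\bar{\nabla}\bar{\mathbf{R}} = 0$ if and only if $\bar{\mathbf{R}} = 0$ and then quoting Theorem \ref{thm-general_growth_vector}(\textit{ii}), whereas you avoid computing $\bar{\nabla}\bar{\mathbf{R}}$ entirely: the vanishing first column of (\ref{eq-curvature_B}) makes $X_3 = [\bar{X},\bar{Y}]$ purely $\partial_p,\partial_q$-valued, the tautological first row of $\bar{\omega}$ forces the $\partial_z$-components of $X_4,X_5$ to be $-X_3(p)$ and $-X_3(q)$, and verticality pins any relation $X_4,X_5 \in \bar{\Delta}'$ down to $\mathrm{span}\{X_3\}$, whence $X_3 = 0$, contradicting rank $3$. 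These bracket identities check out against (\ref{eq-horizontal_proj_vector_fields}), and your argument is in fact slightly stronger than the paper's (it uses only the distribution-level dependence, a priori weaker than $\bar{\nabla}\bar{\mathbf{R}} = 0$), while also explaining structurally why the paper's computation must come out that way.

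Part (\textit{iv}) is where your proposal has a genuine gap, which you yourself flag: you never prove that a very general surface admits no covariantly constant direction, and that is precisely the content of the sufficiency claim. What you do establish is fine --- the equivalence in the first sentence of (\textit{iv}) follows from the trichotomy, $\bar{\mathbf{R}} \neq 0$ follows from $bc \neq 0$, and a covariantly constant direction does force the two nonzero columns of (\ref{eq-curvature_B}) to be linearly dependent, hence the determinant $(c_x+\nu)(b_y+\mu)-(bc)^2$ to vanish. But the step you defer, that this degeneracy together with the integrability conditions (\ref{eq-integrability_conditions}) can occur only for ruled or projectively applicable surfaces, is exactly what ``it is sufficient that $\Sigma$ be very general'' amounts to, and it is not supplied anywhere in the paper either; note that the example family (\ref{eq-234_example}) sits exactly on your degenerate locus, which is consistent with the implication but does not prove it. The paper itself closes (\textit{iv}) differently: its proof is a one-line appeal to Lemma \ref{lemma-horizontal_distribution_M5} and ``the definition of $\bar{\nabla}$,'' i.e.\ it reads very generality as guaranteeing that the connection data $(b,c,\mu,\nu)$ is generic in the sense of that lemma, for which growth vector $(2,3,5)$ is automatic. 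So to finish you must either complete your algebraic argument (show that column dependence plus integrability produces a projective deformation or forces $bc = 0$) or retreat to the paper's genericity reading; as written, the sufficiency statement in (\textit{iv}) remains unproven in your proposal.
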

	\begin{proof} Each point follows from results in this section and \S \ref{ss-horizontal}.
		\begin{enumerate}
			\item[(\textit{i})] This follows from Theorem \ref{thm-general_growth_vector} (\textit{i}) and Lemma \ref{lemma-vanishing_curvature}.
			\item[(\textit{ii})] A direct computation shows $\bar{\nabla} \bar{\mathbf{R}} = 0$ if and only if $\bar{\mathbf{R}} = 0$, so by Theorem \ref{thm-general_growth_vector} (\textit{ii}), there are no projective surfaces with growth vector $(2,3)$.
			\item[(\textit{iii})] This follows from Theorem \ref{thm-general_growth_vector} (\textit{iii}) and Lemma \ref{lemma-ruled_surface_curvature}.
			\item[(\textit{iv})] This follows from Lemma \ref{lemma-horizontal_distribution_M5} and the definition of $\bar{\nabla}$.
		\end{enumerate}
	\end{proof}
	It is natural to ask if there are any non-ruled surfaces that satisfy (\textit{iii}) in Theorem \ref{thm-proj_surface_growth_vector}. We have computed as  an example an interesting 3-parameter family of generically non-ruled projectively applicable surfaces which determines a horizontal 2-plane distribution $\bar{\Delta}$ with growth vector $(2,3,4)$. 
	\begin{proposition}
		\label{prop-3_param_family}
		Consider the linear system
		\begin{equation}
			\label{eq-234_example}
			\begin{aligned}
				\frac{\partial^2 z}{\partial x^2}& = \frac{4k_1\left(k_1 y+k_2\right) }{\left(4 x+k_3\right)^2} \frac{\partial z}{\partial y} \, , \\
				\frac{\partial^2 z}{\partial y^2}& = \frac{4 x+k_3}{\left(k_1 y+k_2\right)^2} \frac{\partial z}{\partial x} \, , \\
			\end{aligned}
		\end{equation}
		with $k_1,k_2,k_3 \in \mathbb{K}$. Then (\ref{eq-234_example}) is of rank-4, and hence the solutions define a 3-parameter family of projectively applicable surfaces $\Sigma = \Sigma_{k_1,k_2,k_3} \subset \mathbb{P}^3$ with $(x,y)$ as asymptotic coordinates.
	\end{proposition}
	\begin{proof}
		One checks that the integrability conditions (\ref{eq-integrability_conditions}) hold, and so by Lemma \ref{lemma-integrability}, (\ref{eq-234_example}) is of rank-4, and is hence the the canonical system of a 3-parameter family $\Sigma \subset \mathbb{P}^3$ of projective surfaces. This family is projectively applicable by the results in \S \ref{sss-proj_applicable}.
	\end{proof}
	It follows that the coefficients in (\ref{eq-234_example}) determine the projective metric $\varphi$ and cubic form $\Phi$ as 
	\begin{equation}
		\label{eq-234_example_invariants}
		\begin{aligned}
			\varphi &= \frac{4k_1}{(4x+k_3)(k_1y+k_2)}(dx \otimes dy + dy \otimes dx) \, , \\
			\Phi &= \frac{-8k_1\left(k_1 y+k_2\right) }{\left(4 x+k_3\right)^2} dx^{\otimes 3} +  \frac{-2(4 x+k_3)}{\left(k_1 y+k_2\right)^2} dy^{\otimes 3} \, .
		\end{aligned}
	\end{equation}
	From these expressions, we see that $\Sigma$ is ruled if and only if $k_1=0$, and there are no quadric surfaces in the family. Moreover, the Gaussian curvature $K(\varphi)$ is easily computed to be $K(\varphi) \equiv 0$, which agrees with Theorem \ref{thm-gaussian_curvature} necessitating the constancy of $K(\varphi)$ for a 3-parameter family of projectively applicable surfaces. Moreover, it is easy to see that the curvature $\bar{\mathbf{R}} \neq 0$, and also that a direct computation shows $\bar{\nabla}_X\bar{\mathbf{R}} = 0$, where $X \in \underline{T\Sigma}$ is the vector field
	\begin{equation}
		\label{eq-234_example_vector}
		X = \left(4 x+k_3\right) \partial_x+ k_1\left(k_1 y+k_2\right) \partial_y \, .
	\end{equation}
	It follows that the horizontal 2-plane distribution $\bar{\Delta}$ has growth vector $(2,3,4)$ by Theorem \ref{thm-proj_surface_growth_vector} (\textit{iii}), though of course this can also be checked directly.
	\par It is of significant interest to determine precise geometric properties of the general surfaces in Theorem \ref{thm-proj_surface_growth_vector} (\textit{iv}), whose bracket generating $(2,3,5)$-distribution $\bar{\Delta}$ has enhanced symmetry \cite[Ch. 17]{Stormark_2000}; in particular, those with maximal 14-dimensional symmetry  given by the simple Lie algebra $\mathfrak{g}_2$. This is the subject of ongoing work by the authors.
	\bibliographystyle{abbrv}
	\bibliography{Ashley_Schultz_BIB}
\end{document}